\theoremstyle{plain}
\newtheorem{theorem}{Theorem}[section]
\newtheorem{lemma}[theorem]{Lemma}
\newtheorem{corollary}[theorem]{Corollary}
\theoremstyle{remark}
\newtheorem{definition}[theorem]{Definition}
\newtheorem{remark}[theorem]{Remark}
\renewcommand{\eqref}[1]{\textnormal{(\ref{#1})}}
\numberwithin{equation}{section}
\newcommand{\rmd}{\mathrm{d}}
\newcommand{\bfx}{\mathbf{x}}
\DeclareMathOperator{\sgn}{sgn}
\newcommand{\R}{\mathbb{R}}
\def\bsi{{\mathbf{i}}}
\def\Oh{{\mathcal  O}}
\def\bsl{ \boldsymbol {l} }
 \def\bsL{ \boldsymbol {L} }
\def\bfx{ \mathbf {x} }
\DeclareMathOperator*{\argmin}{\arg\min}
\title[Unique continuation for Maxwell's system and applications]{Unique continuation from a generalized impedance edge-corner for Maxwell's system and applications to inverse problems} 
\author{Huaian Diao}
\address{School of Mathematics and Statistics, Northeast Normal University,
Changchun, Jilin 130024, China.}
\email{hadiao@nenu.edu.cn}
\author{Hongyu Liu}
\address{Department of Mathematics, City University of Hong Kong, Kowloon, Hong Kong, China.}
\email{ hongyu.liuip@gmail.com, hongyliu@cityu.edu.hk}
\author{Long Zhang}
\address{School of Mathematics and Statistics, Northeast Normal University,
Changchun, Jilin 130024, China.}
\email{907278586@qq.com}
\author{Jun Zou}
\address{Department of Mathematics, The Chinese University of Hong Kong, Hong Kong, China}
\email{zou@math.cuhk.edu.hk}
\begin{document}

\begin{abstract}

We consider the time-harmonic Maxwell system in a domain with a generalized impedance edge-corner, namely the presence of two generalized impedance planes that intersect at an edge. The impedance parameter can be $0, \infty$ or a finite non-identically vanishing variable function. We establish an accurate relationship between the vanishing order of the solutions to the Maxwell system and the dihedral angle of the edge-corner. In particular, if the angle is irrational, the vanishing order is infinity, i.e. strong unique continuation holds from the edge-corner. The establishment of those new quantitative results involve a highly intricate and subtle algebraic argument. The unique continuation study is strongly motivated by our study of a longstanding inverse electromagnetic scattering problem. As a significant application, we derive several novel unique identifiability results in determining a polyhedral obstacle as well as it surface impedance by a single far-field measurement. We also discuss another potential and interesting application of our result in the inverse scattering theory related to the information encoding.

\medskip
\noindent{\bf Keywords} Maxwell's system, generalized impedance plane, edge-corner, vanishing order, inverse electromagnetic scattering, single far-field measurement

\medskip
\noindent{\bf Mathematics Subject Classification (2010)}: 35P05, 35P25, 35R30, 35Q60

\end{abstract}

\maketitle

\section{Introduction}\label{sec:Intro}

Let $\Omega$ be an open set in $\mathbb{R}^3$, bounded or unbounded, and set
\[
\begin{split}
H_{loc}(\mathrm{curl}, \Omega)=& \big\{U|_{B}\in H(\mathrm{curl}, B); B\ \mbox{is any bounded subdomain of $\Omega$}\big\}, \\
H(\mathrm{curl}, B)=& \big\{U\in L^2(B)^3; \ \nabla\wedge U\in L^2(B)^3\big\}. 
\end{split}
\]
Consider the time-harmonic Maxwell equations for $(\mathbf{E}, \mathbf{H})\in H_{loc}(\mathrm{curl}, \Omega)\times H_{loc}(\mathrm{curl}, \Omega)$:
\begin{equation}\label{eq:eig}
\nabla\wedge\mathbf{E}-\mathbf{i} k\mathbf{H}={\mathbf 0},\quad \nabla\wedge\mathbf{H}+\mathbf{i} k\mathbf{E}={\mathbf 0},
\end{equation}
where $\mathbf{i}:=\sqrt{-1}$ and $k\in\mathbb{R}_+$. In this paper, we are concerned with the unique continuation property (UCP) of the Maxwell system \eqref{eq:eig} in a particular scenario, which is strongly motivated by our study of a longstanding problem in the inverse electromagnetic scattering theory. In what follows, we first present the mathematical setup for our UCP study. 

Let $B_\rho(\mathbf{x})$ denote a ball of radius $\rho\in\mathbb{R}_+$ and centered at $\mathbf{x}\in\mathbb{R}^3$. In the sequel, for a set $K\subset\mathbb{R}^3$, $B_\rho(K):=\{\mathbf{x}; \mathbf{x}\in B_\rho(\mathbf{y})\ \mbox{for any}\ \mathbf{y}\in K\}$.
Let $\Pi_1$ and $\Pi_2$ be two planes in $\mathbb{R}^3$ such that $\Pi_1\cap\Pi_2=\bsL$, where $\bsL$ is a straight line. We suppose that there exists an open line segment $\bsl\Subset \bsL$ and $\rho\in\mathbb{R}_+$ such that $B_\rho(\bsl)\Subset\Omega$. Let $\mathcal{W}(\Pi_1, \Pi_2)$ denote one of the wedge domains formed by $\Pi_1$ and $\Pi_2$, then
$\partial \mathcal{W}(\Pi_1,\Pi_2)\cap B_\rho(\bsl)$ is called an edge-corner associated with $\Pi_1$ and $\Pi_2$;
see Fig.~\ref{fig:coordinate1} for a schematic illustration. In the sequel, we let $\widetilde{\Pi}_j$, $j=1,2$, denote the two flat faces of the edge-corner lying on $\Pi_j$, respectively, and denote it by ${\mathcal E}(\widetilde\Pi_1, \widetilde\Pi_2,\bsl)$. Any $\mathbf{x}\in \bsl$ is said to be an edge-corner point of ${\mathcal E}(\widetilde\Pi_1, \widetilde\Pi_2,\bsl)$.
\begin{figure}[htbp]
	\centering
\vspace*{-1cm}	\includegraphics[width=0.25\linewidth]{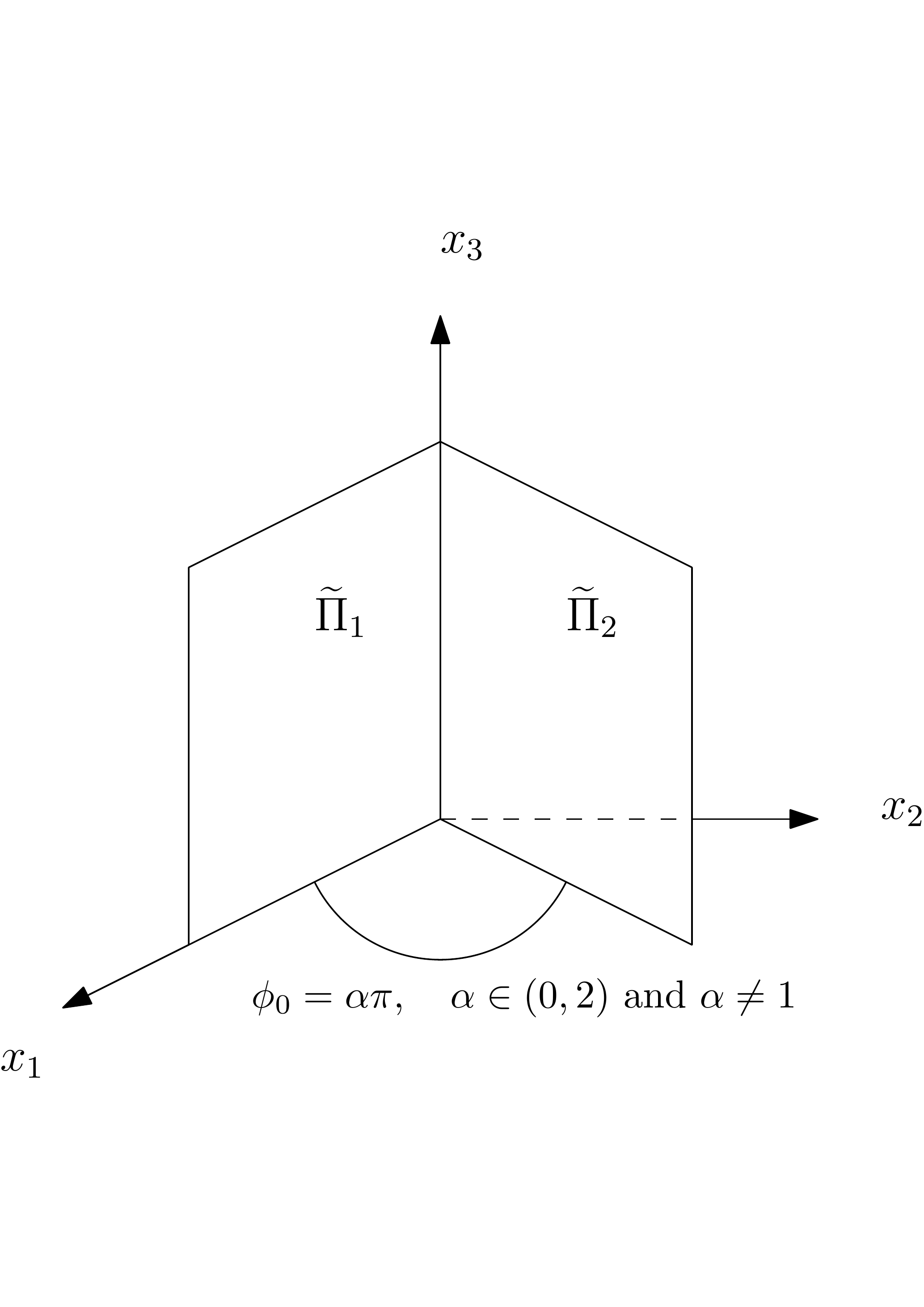}\\[-25pt]
	\caption{Schematic illustration of two intersecting planes with an edge-corner ${\mathcal E}(\widetilde\Pi_1, \widetilde\Pi_2,\bsl)$ and the dihedral angle $\phi_0$.}
	\label{fig:coordinate1}
\end{figure}

%

 Let $\boldsymbol{ \eta}_j$ denote a generalized impedance parameter on $\widetilde{\Pi}_j$, whose value must fulfil one of the following three possibilities 
\begin{equation}\label{eq:imp1}
({\rm i})~\boldsymbol{ \eta}_j\equiv 0;\quad ({\rm ii})~\boldsymbol{ \eta}_j\equiv \infty;\quad ({\rm iii})~\boldsymbol{ \eta}_j\in L^\infty(\widetilde{\Pi}_j). 
\end{equation}
Let $\nu_j\in\mathbb{S}^2$ be the unit normal vector to $\Pi_j$, pointing to the exterior of $\mathcal{W}(\Pi_1,\Pi_2)$. We introduce the following {\it generalized impedance condition} on $\widetilde{\Pi}_j$ associated with $(\mathbf{E}, \mathbf{H})$ to the Maxwell system \eqref{eq:eig}: 
\begin{equation}\label{eq:imp2}
\nu_j \wedge (\nabla\wedge \mathbf{E})+\boldsymbol{ \eta}_j (\nu_j\wedge\mathbf{E})\wedge\nu_j\big|_{\widetilde{\Pi}_j}=0. 
\end{equation}
In the case $ \boldsymbol{ \eta}_j\equiv \infty$, \eqref{eq:imp2} is understood as
\begin{equation}\label{eq:imp3}
(\nu_j\wedge\mathbf{E})\wedge\nu_j\big|_{\widetilde{\Pi}_j}=0. 
\end{equation}
An edge-corner ${\mathcal E}(\widetilde\Pi_1, \widetilde\Pi_2,\bsl)$ with the generalized impedance condition \eqref{eq:imp2} imposed on $\widetilde{\Pi}_j$, $j=1,2$, is called a generalized impedance edge-corner associated with the Maxwell system \eqref{eq:eig}. In this paper, we shall consider the unique continuation property of the solution $(\mathbf{E}, \mathbf{H})$ to \eqref{eq:eig} with the presence of a generalized impedance edge-corner. 

The UCP for differential equations from a crack in the domain has been the subject of many existing studies in the literature, see e.g. \cite{AF,CDD,Dal} and the references cited therein. However, the corresponding study to the Maxwell system is rather rare. Moreover, there are several other features that make our current study interestingly new and distinct from many existing UCP studies from cracks. First, the Maxwell system \eqref{eq:eig} is defined in the whole domain $\Omega$, instead of the exterior of the crack, namely $\Omega\backslash {\mathcal E}(\widetilde\Pi_1, \widetilde\Pi_2,\bsl)$. Usually, for a typical UCP problem from a crack, the differential equation is given over the exterior of the crack, and hence the solution inherits a certain singularity from the pathological geometry of the crack. But in our case, by the standard PDE theory, we know that $(\mathbf{E}, \mathbf{H})$ are real analytic in the interior of $\Omega$, and in particular in $B_\rho(\bsl)$ which is a neighbourhood of the edge-corner. This makes our UCP study seemingly rather ``artificial". However, on the one hand, the UCP problem in this work is strongly motivated by our study of the inverse electromagnetic scattering problems. This shall become more evident in Section~\ref{sec4}, and the UCP results shall generate some significant applications that are of both theoretical and practical importance. On the other hand, it turns out that the analyticity of the solutions around the edge-corner is a key factor that helps us to develop an algebraic argument in achieving the desired UCP, though highly intricate and subtle. Second, the edge-corner geometry enables us to establish an accurate relationship between the vanishing order of the solutions to the Maxwell system and the angle of the edge-corner. In particular, if the angle is irrational, then the vanishing order is infinity, i.e. strong unique continuation holds from the edge-corner. We would like to point out that it seems that the extension to the other more general geometry seems rather unpractical, though certain quantitative estimates are more plausible. Third, it is remarked that in our UCP study, the Robin-type generalized impedance condition \eqref{eq:imp2} is considered on the crack, namely the edge-corner, whereas in most of the existing studies of UCP from cracks, homogeneous Dirichlet-type or Neumann-type conditions are more concerned, which correspond to $ \boldsymbol{ \eta} \equiv 0$ or $\boldsymbol{ \eta} \equiv \infty$, respectively.

As mentioned earlier, we shall consider two interesting and significant applications of the new UCP results to the study of inverse electromagnetic scattering problems. We postpone the mathematical formulation of the inverse problem to Section~\ref{sec5} and we are mainly concerned with the determination of an impenetrable obstacle as well as its boundary impedance by a single electromagnetic far-field measurement. This constitutes a longstanding problem in the inverse scattering theory (cf. \cite{CK18}). In \cite{LiuA,Liu3,Liu09}, the case $\boldsymbol \eta\equiv 0$ or $\boldsymbol \eta\equiv\infty$ was considered, and it is shown that a single far-field measurement can uniquely determine an obstacle of the general polyhedral shape and the corresponding stability estimate was established in \cite{LRX}. The proofs are mainly based on the path argument originated in \cite{Liu-Zou} for the acoustic problem as well as a certain reflection principle for the Maxwell system establish in \cite{Liu3,Liu09}. However, the arguments developed therein cannot be extended to tackle the case that the impedance parameter $\boldsymbol \eta$ is finite and non-identically zero, even if in the simplest case that it is a finite and nonzero constant, and a fortiori a variable function in our study. Using the UCP results derived in this paper, we are able to establish several novel unique identifiability results for this challenging problem in the polyhedral case, especially in the case that $\boldsymbol \eta$ is a finite and non-identically zero variable function. Nevertheless, it is our intention to point out that we shall require certain mild but unobjectionable a-priori knowledge of the underlying polyhedral obstacle as well it surface impedance. The other interesting application of our UCP results is about the ``information encoding" for the inverse electromagnetic scattering problems. Indeed, we shall regard our UCP results as generalizing the classical Holmgren's principle \cite{CK,TF} for the Maxwell equations. With this view, we can provide an alternative means of electromagnetic scattering measurements for inverse problems that might have some practical implications. 

The rest of the paper is organized as follows. Section \ref{sec:2}  is devoted to some preliminary knowledge and auxiliary results. In Sections \ref{sec:5} and  \ref{sec:6}, we establish the UCP results from a generalized impedance edge-corner for the Maxwell equations \eqref{eq:eig} in two different scenarios. In Section \ref{sec5}, we consider the inverse electromagnetic scattering problems and present two applications of the newly established UCP results.

\section{Preliminaries and auxiliary lemmas}\label{sec:2}

In this section, we collect some preliminary knowledge for the Maxwell system \eqref{eq:eig} as well as derive several auxiliary lemmas for our subsequent use. 

First, we note that the Maxwell system \eqref{eq:eig} is invariant under rigid motions (cf. \cite{BLZ,LZh}). Hence, throughout the rest of this paper and without loss of generality, we can assume that the edge-corner ${\mathcal E}(\widetilde\Pi_1, \widetilde\Pi_2,\bsl) \Subset \Omega$ satisfies
\begin{equation*}\label{notation}
\bsl=\big\{~\bfx=(\bfx',x_3)\in \R^3; \bfx':=(x_1,x_2)=\mathbf{0},\ x_3\in (-h,h )\big\}\Subset\Omega,
\end{equation*}
where $2h\in\mathbb{R}_+$ is the length of $\bsl$, and furthermore $\Pi_1$ coincides with the $(x_1,x_3)$-plane while $\Pi_2$ possesses a dihedral angle $\phi_0= \alpha\pi$ away from $\Pi_1$ in the anti-clockwise direction; see Fig.~\ref{fig:coordinate1} for a schematic illustration. Throughout, it is assumed that
\begin{equation}\label{eq:angle2a}
	\alpha\in(0, 2)\quad\mbox{and}\quad \alpha\neq 1. 
\end{equation}
It can directly verified that the exterior unit normal vectors $\nu_j$ to $\Pi_j$, $j=1,2$ are given by
    \begin{equation}\label{l1}
    \begin{split}
    &\nu_1=(0,-1,0)^\top  , \quad \nu_2=(-\sin\phi_0,\cos\phi_0,0)^\top.
    \end {split}
    \end{equation}
As specified earlier, we have the generalized impedance condition \eqref{eq:imp2} imposed on $\widetilde\Pi_j$, where the boundary impedance parameter $\boldsymbol \eta_j$ fulfils \eqref{eq:imp1}. 
In order to consider the unique continuation from the edge-corner as described above, we introduce the following definition. 
\begin{definition}\label{def:3}
Let $\mathbf{E}\in H_{loc}(\mathrm{curl},\Omega)$ be a solution to \eqref{eq:eig} and suppose there exists an edge-corner ${\mathcal E}(\widetilde\Pi_1, \widetilde\Pi_2,\bsl) \Subset \Omega$ as described above. For a given point $\mathbf{x}_0 \in \bsl$, if there  exits a number $N \in {\mathbb N}\cup\{0\}$ such that
\begin{equation}\label{eq:normal3}
	\lim_{\rho\rightarrow +0} \frac{1}{\rho^m} \int_{B_\rho(\mathbf{x}_0)}\, |\mathbf{E}(\mathbf{x})|\, {\rm d} \mathbf{x}=0\ \ \mbox{for}\ \ m=0,1,\ldots, {{N+2}},
\end{equation}
we say that $\mathbf{E}$ vanishes at $\mathbf{x}_0$ up to the order $N$. The largest possible $N$ such that \eqref{eq:normal3} is fulfilled is called the vanishing order of $\mathbf{E}$ at $\mathbf{x}_0$, and we write
\begin{equation*}\label{eq:normal4}
\mathrm{Vani}(\mathbf{E}; \mathbf{x}_0)=N.
\end{equation*}
If \eqref{eq:normal3} holds for any $N\in\mathbb{N}$, then we say that the vanishing order is infinity.
\end{definition}

Since $\mathbf{E}$ is (real) analytic in $\Omega$, we immediately see that if the vanishing order of $\mathbf{E}$ at any point $\mathbf{x}_0\in\bsl$ is infinity, then $\mathbf{E}\equiv 0$ in $\Omega$, namely the strong unique continuation property holds. In what follows, it is sufficient to consider the UCP at the origin $\mathbf{0}\in\bsl$. Moreover, due to the symmetry role between $(\mathbf{E}, \mathbf{H})$ and $(-\mathbf{H}, \mathbf{E})$, namely both of them satisfy the same Maxwell system \eqref{eq:eig}, we only consider the vanishing order of $\mathbf{E}$, and the same result equally holds for $\mathbf{H}$. It turns out that the vanishing order of $\mathbf{E}$ is related to the {\it rationality} of the edge-corner angle, i.e. $\alpha\pi$, and we shall make it more rigorous in the following.

In the subsequent analysis,  we shall make frequent use of the spherical coordinate of a point $\bfx$ in $\R^3$:
\begin{equation}\label{eq:x sph}
		 \mathbf{ x}=(r\sin\theta\cos\phi, r\sin\theta\sin\phi, r\cos\theta):=(r,\theta,\phi),\ r\geq 0, \, {{\theta\in [0,\pi),\, \phi \in [0, 2\pi)}}\,.
	\end{equation}
It is noted that
    \begin{equation}\label{w1}
    \begin{split}
    &\boldsymbol{\hat{r}}=\sin\theta\cos\phi\cdot\hat{\mathbf{x}} +\sin\theta\sin\phi\cdot\hat{\mathbf{y}}+\cos\theta\cdot\hat{\mathbf{z}}\\
    &\boldsymbol{\hat{\theta}}=\cos\theta\cos\phi\cdot\hat{\mathbf{x}}+\cos\theta\sin\phi\cdot\hat{\mathbf{y}}-\sin\theta\cdot\hat{\mathbf{z}}\\
    &\boldsymbol{\hat{\phi}}=-\sin\phi\cdot\hat{\mathbf{x}}+\cos\phi\cdot\hat{\mathbf{y}}
    \end{split}
    \end{equation}
    constitutes an orthonormal basis in the spherical coordinate system, where $\hat{\mathbf{x}}=(1,0,0)^{\top},\hat{\mathbf{y}}=(0,1,0)^{\top},\hat{\mathbf{z}}=(0,0,1)^{\top}$. 
    
    \begin{definition}\label{def:class1}
    Suppose that $\psi(r,\theta)$ is a complex-valued function for $(r, \theta)\in \Sigma:=[0, r_0]\times [-\theta_0, \theta_0]$, where $r_0, \theta_0\in\mathbb{R}_+$. $\psi$ is said to belong to class $\mathcal{A}$ in $\Sigma$ if it allows an absolutely convergent series representation as follows
    \begin{equation}\label{eq:series1}
    \psi(r,\theta)=a_0+\sum_{j=1}^\infty a_j(\theta) r^j,
    \end{equation}
    where $a_0\in\mathbb{C}\backslash\{0\}$ and $a_j(\theta)\in C[-\theta_0, \theta_0]$. 
    \end{definition}
    
   Two simple scenarios for $\psi(r,\theta)$ to belong to the class $\mathcal{A}$: first, $\psi$ is a non-zero constant; second, $\psi(r, \theta)$ is real-analytic in $\Sigma$ with $r_0, \theta_0$ sufficiently small and $\psi(0,\theta)$ independent of $\theta$. For an impedance parameter $\boldsymbol{ \eta}_j$ in \eqref{eq:imp2} in the third case, namely $\boldsymbol{ \eta}_j\in L^\infty(\widetilde\Pi_j)$, we readily see that in the $(r,\theta,\phi)$-coordinate, $\phi|_{\widetilde\Pi_1}=0$ and $\phi|_{\widetilde\Pi_2}=\phi_0$. In what follows, if for any $\bfx_0\in\bsl$ there exists a neighbourhood $\Sigma_{\bfx_0}$ of $\bfx_0$ which is of the form in Definition~\ref{def:class1} and is contained in  $\overline{\widetilde{\Pi}_j}$ such that $\psi_{\bfx_0}(r,\theta):=\boldsymbol{ \eta}_j(\bfx-\bfx_0)$ belongs to the class $\mathcal{A}$ in $\Sigma_{\bfx_0}$, then we say that $\eta_j$ belongs to the class $\mathcal{A}(\bsl)$. It is emphasized that $\boldsymbol{ \eta}_j$ belonging to the class $\mathcal{A}(\bsl)$ is a local property, which is localized around a neighbourhood of $\bsl$ on $\widetilde{\Pi}_j$. In fact, in our subsequent analysis of the UCP from the edge-corner ${\mathcal E}(\widetilde\Pi_1, \widetilde\Pi_2,\bsl)$ is confined locally around a neighbourhood of $\bsl$, and indeed, around a neighbourhood of the origin $\mathbf{0}$ according to our earlier discussion.

Next, we consider the Fourier representations of the solutions to \eqref{eq:eig} in terms of the spherical waves. Throughout the rest of the paper, for a fixed $l \in \mathbb N$ we adopt the notation
  \begin{equation}\label{eq:lindex}
    	[ l ]_0:=\{0,\pm 1, \ldots, \pm l \}, \quad [ l ]_1:=\{\pm 1, \ldots, \pm l \}.
    \end{equation}
 Recall  that the spherical harmonics  $Y_l^m(\theta,\phi)$ is given by
	\begin{equation}\label{sphe harmonic}
    \begin{split}
	Y_l^m(\theta, \phi)=c_l^mP_l^{|m|}(\cos\theta)e^{\bsi m\phi},\quad c_l^m=\sqrt{\frac{2l+1}{4\pi}\frac{(l-|m|)!}{(l+|m|)!}},
    \end{split}
	\end{equation}
where $P_l^m(t)$ is the Legendre function. For simplicity, we use the notation $Y_l^m$ for $Y_l^m(\theta, \phi)$ from the clear context. For our subsequent use, the following lemma presents some important properties of the associated Legendre functions, which can be conveniently found in \cite{Abr}.

\begin{lemma}\label{base21}
	In the spherical coordinate system, the Legendre functions fulfil the following orthogonality condition
	for any fixed $n \in \mathbb N$, and any two integers $m\ge 0$ and $l\leq n$:
    \begin{equation}\label{ortho3}
    \int_{-\pi}^{\pi}\frac{P_n^m(\cos\theta)P_n^l(\cos\theta)}{\sin\theta}\,d\theta=
  \begin{cases}
    0 &\mbox{ if }\quad l\neq m,\medskip\\
    \frac{(n+m)!}{m(n-m)!} &\mbox{ if }\quad l=m \neq 0.
    \end{cases}
    \end{equation}
    Furthermore, the following recursive relationships hold
    \begin{equation}\label{uu}
      \begin{split}
\frac{{\rm d}  P_l^{|m|}(\cos\theta)}{{\rm d} \theta}&=\frac{1}{2}\big[(l+|m|)(l-|m|+1)P_l^{|m|-1}(\cos\theta)-P_l^{|m|+1}(\cos\theta)\big], \\
\frac{|m|}{\sin\theta}P_l^{|m|}(\cos\theta)&=-\frac{1}{2}\big[P_{l-1}^{|m|+1}(\cos\theta)+(l+|m|-1)(l+|m|)P_{l-1}^{|m|-1}(\cos\theta)\big],
 \end{split}
\end{equation}
where $l\in \mathbb N$ and  $m\in [ l]_0.$  If $P_l^m(\cos \theta)$ is evaluated at $\theta=0$, for $l\in \mathbb N \cup \{0\}$ we have
\begin{equation}\label{eq:plm0}
	   P_l^m(1)=0, \quad m\in [l]_1 ; \quad  P_l^0(1)=1.
\end{equation}
For a fixed $n\in \mathbb N \cup \{0\}$ and $m\in \mathbb N$ with $m\leq n$, it holds that
\begin{equation}\label{eq:pnm neg}
	P_{n}^{-m} (\cos \theta  ) =(-1)^m  \frac{ (n-m)!}{(n+m)!}P_n^m (\cos \theta ).
\end{equation}
\end{lemma}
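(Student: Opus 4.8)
The plan is to establish the four assertions one at a time, deducing all of them from the Rodrigues representation of the associated Legendre functions together with the classical contiguous recurrences; since these are tabulated identities (cf.\ \cite{Abr}), the work is essentially organizational, and I would arrange it so that the elementary facts feed the harder ones.

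The basic tool is the generalized Rodrigues formula $P_l^{m}(x)=\frac{(-1)^m}{2^l l!}(1-x^2)^{m/2}\frac{\mathrm d^{\,l+m}}{\mathrm d x^{\,l+m}}(x^2-1)^l$, valid for $-l\le m\le l$. The boundary values \eqref{eq:plm0} are then immediate for $m\ge 1$: putting $x=\cos\theta=1$, the prefactor $(1-x^2)^{m/2}=\sin^{m}\theta$ vanishes, whereas for $m=0$ one has $P_l^0(x)=P_l(x)$ and $P_l(1)=1$; the negative orders in $[l]_1$ reduce to the positive ones through \eqref{eq:pnm neg}. To obtain \eqref{eq:pnm neg} itself I would substitute the Rodrigues formula into $P_n^{-m}$ and compare $\frac{\mathrm d^{\,n-m}}{\mathrm d x^{\,n-m}}(x^2-1)^n$ with $\frac{\mathrm d^{\,n+m}}{\mathrm d x^{\,n+m}}(x^2-1)^n$; writing $(x^2-1)^n=(x-1)^n(x+1)^n$ and expanding both derivatives by Leibniz' rule, the leading combinatorial factors match after cancellation and produce exactly $(-1)^m\frac{(n-m)!}{(n+m)!}$.

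For the two recurrences in \eqref{uu} I would pass to the angular variable through $\frac{\mathrm d}{\mathrm d\theta}=-\sin\theta\,\frac{\mathrm d}{\mathrm d x}$ and combine the standard relations that raise and lower the order by one unit. The first line is the derivative formula presenting $\frac{\mathrm d}{\mathrm d\theta}P_l^{|m|}(\cos\theta)$ as a balanced difference of $P_l^{|m|-1}$ and $P_l^{|m|+1}$, while the second is its companion expressing $\frac{|m|}{\sin\theta}P_l^{|m|}(\cos\theta)$ through $P_{l-1}^{|m|+1}$ and $P_{l-1}^{|m|-1}$; both follow by inserting the Rodrigues representation and applying Leibniz' rule repeatedly, or equivalently by invoking the tabulated ladder operators directly. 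Care is needed only to keep the Condon--Shortley phase and the factorial coefficients consistent.

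The step I expect to require the most care is the orthogonality relation \eqref{ortho3}, since it is an orthogonality in the \emph{order} index rather than in the more familiar degree, and it carries the singular weight $1/\sin\theta$. The cleanest route is to read the associated Legendre equation in Sturm--Liouville form, namely $\frac{\mathrm d}{\mathrm d x}\big[(1-x^2)y'\big]+n(n+1)y=m^2\,(1-x^2)^{-1}y$ with $x=\cos\theta$, so that for fixed $n$ the functions $P_n^m$ become eigenfunctions with eigenvalue $m^2$ of a self-adjoint operator carrying the weight $(1-x^2)^{-1}$, the boundary terms vanishing because $1-x^2\to 0$ at $x=\pm1$. The vanishing for $l\ne m$ is then the standard orthogonality of eigenfunctions attached to distinct eigenvalues, and the surviving constant $\frac{(n+m)!}{m(n-m)!}$ is computed by evaluating the normalization integral with the help of \eqref{uu}. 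Here I would also be attentive to the improper behaviour of the integrand at $\theta=0$ and to the precise integration range, justifying the computation as a convergent integral and checking that the normalization tabulated in \cite{Abr} is reproduced.
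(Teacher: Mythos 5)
Your proposal is correct in substance, but it takes a genuinely different route from the paper for a simple reason: the paper offers no proof of this lemma at all. Lemma~\ref{base21} is presented as a collection of classical facts about associated Legendre functions and is justified solely by the pointer to the handbook \cite{Abr} (``which can be conveniently found in \cite{Abr}''). What you do instead is reconstruct the tabulated identities from scratch: the Rodrigues representation gives the endpoint values \eqref{eq:plm0} and, via Leibniz' rule, the reflection formula \eqref{eq:pnm neg}; the ladder (contiguous-order) relations, transported to the angular variable by $\frac{\mathrm d}{\mathrm d\theta}=-\sin\theta\,\frac{\mathrm d}{\mathrm d x}$, give the two recurrences \eqref{uu}; and the Sturm--Liouville reading of the associated Legendre equation, with $m^2$ as eigenvalue and weight $(1-x^2)^{-1}$, gives the fixed-degree orthogonality \eqref{ortho3}, the boundary terms vanishing because $P_n^m(x)\sim c\,(1-x^2)^{m/2}$ near $x=\pm 1$ once $\max(m,l)\geq 1$. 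This buys self-containedness and makes visible where each hypothesis (e.g. $l=m\neq 0$ excluded from the normalization) enters; the paper's citation buys brevity, which is the usual practice for such handbook identities.

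One caveat that you gesture at (``attentive to the precise integration range'') but should make explicit: as literally written, with the integral over $\theta\in[-\pi,\pi]$, formula \eqref{ortho3} cannot hold in the case $l=m\neq 0$, because $P_n^m(\cos\theta)P_n^l(\cos\theta)$ is even in $\theta$ while $1/\sin\theta$ is odd, so the (absolutely convergent, for $m+l\geq 1$) integral over the symmetric range is zero. The intended statement is the integral over $[0,\pi]$, equivalently
\begin{equation*}
\int_{-1}^{1}\frac{P_n^m(x)\,P_n^l(x)}{1-x^2}\,\mathrm d x ,
\end{equation*}
which is exactly what your Sturm--Liouville computation in the variable $x=\cos\theta$ establishes, and which is the form the paper actually uses later (the spherical coordinates there have $\theta\in[0,\pi)$). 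So your argument proves the correct version of the identity; you should simply record the range correction rather than leave it implicit.
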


 Recall that the spherical Bessel function  $j_\ell (t)$ of the order $\ell $ is defined by
  \begin{equation}\label{eq:bess sph}
	 	 j_\ell (t)=\frac{t^\ell }{ (2\ell+1)!!}\left  (1-\sum_{l=1}^\infty  \frac{(-1)^l t^{2l }}{ 2^l l! (2\ell+3)\cdots  (2\ell+2l+1)  }\right  )=\frac{t^\ell }{ (2\ell+1)!!}+\Oh (t^{\ell+2}) .
	 \end{equation}
There holds the following recursive relationships \cite{Abr}:	
\begin{equation}\label{eq:bessel}
	\begin{split}
		\frac{j_\ell(t)}{t}=\frac{ j_{\ell-1}(t)+j_{\ell+1}(t)} {2\ell+1},   \quad  j_{\ell }'(t) = \frac{\ell  j_{\ell-1}(t)-(\ell+1) j_{\ell+1}(t)}{2\ell+1} , \quad \ell \in \mathbb N.
	\end{split}
\end{equation}

	\begin{lemma}\cite[Lemma 2.5]{CDL2}\label{lem:coeff0}
	Suppose that for $t\in(0, h)$, $h\in\mathbb{R}_+$,
	\begin{equation}\label{coef1}
	\sum_{n=0}^{\infty}\alpha_nj_n(t)=0,
	\end{equation}
	where $j_n(t)$ is the $n$-th spherical Bessel function. Then
	$\alpha_n=0,\quad n=0,1,2,\ldots.$
\end{lemma}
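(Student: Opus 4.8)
The plan is to exploit the distinct vanishing orders of the spherical Bessel functions at the origin, as exhibited by the expansion \eqref{eq:bess sph}, namely $j_\ell(t)=t^\ell/(2\ell+1)!!+\Oh(t^{\ell+2})$. Each $j_\ell$ is entire, and its Taylor series at $t=0$ contains only the powers $t^\ell, t^{\ell+2}, t^{\ell+4},\ldots$; thus the index $n$ is precisely the order of vanishing of $j_n$ at the origin, and functions of different order contribute their leading terms at different powers of $t$. This separation of scales is what forces the coefficients to vanish one at a time. Accordingly, I would argue by induction on $n$ (equivalently, by taking the smallest index $n_0$ with $\alpha_{n_0}\neq 0$ and seeking a contradiction).

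Assuming $\alpha_0=\cdots=\alpha_{n-1}=0$ has been established, I rewrite \eqref{coef1} as $\sum_{m\ge n}\alpha_m j_m(t)=0$ and divide by $t^n$ to obtain
\[
\alpha_n\,\frac{j_n(t)}{t^n}+\sum_{m> n}\alpha_m\,\frac{j_m(t)}{t^n}=0,\qquad t\in(0,h).
\]
Letting $t\to 0^+$ and using \eqref{eq:bess sph}, the first term tends to $\alpha_n/(2n+1)!!$, while every summand in the tail satisfies $j_m(t)/t^n=\Oh(t^{m-n})\to 0$. Provided the limit may be passed inside the infinite sum, we conclude $\alpha_n/(2n+1)!!=0$, hence $\alpha_n=0$, which completes the induction. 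Equivalently, one may view $F(t):=\sum_m\alpha_m j_m(t)$ as analytic near $t=0$ and simply match the coefficient of $t^n$ in the identity $F\equiv 0$: only indices $m\le n$ of the same parity can contribute to the power $t^n$, and those with $m<n$ already vanish by the inductive hypothesis, so the coefficient of $t^n$ is exactly $\alpha_n/(2n+1)!!$.

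The main obstacle is the justification of interchanging the limit $t\to 0^+$ with the infinite summation (equivalently, of reading off the Taylor coefficients of $F$ term by term). I would handle this by establishing local uniform control of the tail: using the standard growth bound $\abs{j_m(t)}\le t^m/(2m+1)!!$ for $t\ge 0$ together with the convergence of the series in \eqref{coef1}, one shows that the partial sums converge uniformly on compact subsets of $[0,h)$, so that $F$ is (real-)analytic near $t=0$ and its Taylor coefficients are obtained by summing those of the individual $j_m$. Since $F\equiv 0$ on $(0,h)$, all these coefficients vanish, and the coefficient-matching in the previous paragraph isolates each $\alpha_n$. Making this uniform-convergence/analyticity step rigorous, rather than the elementary algebra of matching leading orders, is the delicate part of the argument.
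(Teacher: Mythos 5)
The paper itself offers no proof of this lemma: it is imported verbatim from \cite[Lemma 2.5]{CDL2}, so there is no in-paper argument to compare against, and your proposal must be judged on its own merits. Its skeleton --- induction on the smallest non-vanishing index, division by $t^n$, passage to the limit $t\to 0^+$ using the expansion \eqref{eq:bess sph}, or equivalently term-by-term Taylor coefficient matching with the parity observation --- is the natural argument and is essentially the one used in the cited reference.

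There is, however, one incomplete step, and it sits exactly where you located the delicacy. The upper bound $|j_m(t)|\le t^m/(2m+1)!!$ together with pointwise convergence of \eqref{coef1} does \emph{not} by itself yield uniform convergence on compact subsets of $[0,h)$: a Weierstrass-type test requires $\sum_m|\alpha_m|\,t_1^m/(2m+1)!!<\infty$, and nothing stated so far controls $|\alpha_m|$; a priori the coefficients could grow arbitrarily fast, compensated at each fixed $t$ by smallness of $j_m(t)$. What is missing is the complementary \emph{lower} bound. Fix $t_0\in(0,h)$; from \eqref{eq:bess sph} the correction factor is $1+\Oh(t_0^2/m)$ uniformly in $m$, so $j_m(t_0)\ge \tfrac12\, t_0^m/(2m+1)!!$ for all sufficiently large $m$ (in particular $t_0$ lies below the first positive zero of $j_m$ once $m$ is large). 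Since the terms of a convergent series tend to zero, $\alpha_m j_m(t_0)\to 0$, and the lower bound converts this into the growth estimate $|\alpha_m|\le C\,(2m+1)!!/t_0^m$. Only then does your upper bound give $|\alpha_m j_m(t)|\le C\,(t/t_0)^m$, hence absolute and uniform convergence on $[0,t_1]$ for every $t_1<t_0$, which justifies both the limit interchange in your induction (the tail after division by $t^n$ is dominated by $C\,t_0^{-n}\sum_{m>n}(t/t_0)^{m-n}\to 0$) and the analyticity/coefficient-matching formulation. With this single insertion the proof closes; as written, the claim that the upper bound plus convergence ``shows'' local uniform convergence is not yet a proof.
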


	 \begin{lemma}\cite{CK}\label{k2}
	 Recall that $\hat{\boldsymbol{r}},\hat{\boldsymbol\theta}$ and  $\hat{\boldsymbol\phi}$ are defined  in \eqref{w1}. Denote
	 \begin{equation}\label{ww}
\begin{split}
	&\mathbf{M}_l^m(\bfx)=j_l{(kr)}\cdot\mathbf{X}_l^m,\quad \mathbf{N}_l^m(\bfx)=\boldsymbol{\mathrm{i}}\bigg(\frac{ j_l(kr)}{kr}+j'_l\big(kr\big)\bigg)\mathbf{Z}_l^m-\frac{\sqrt{l(l+1)}}{kr}\cdot j_l(kr)Y_l^m\cdot\boldsymbol{\hat{r}},
    \end{split}
    \end{equation}
    where $k\in \mathbb R_+$,  $j'_l\big(kr\big)$ is the derivative of $j_l(kr)$ with respect to $kr$, and
    $$\mathbf{X}_l^m=\frac{\boldsymbol{\mathrm{i}}}{\sqrt{l(l+1)}}\bigg(\frac{\boldsymbol{\mathrm{i}}\cdot m}{\sin\theta}Y_l^m\hat{\boldsymbol{\theta}}
    -\frac{\partial{Y_l^m}}{\partial\theta}\cdot \hat{\boldsymbol\phi}\bigg),\quad  \mathbf{Z}_l^m=\frac{\boldsymbol{\mathrm{i}}}{\sqrt{l(l+1)}}
    \bigg(\frac{\partial{Y_l^m}}{\partial\theta}\hat{\boldsymbol\theta}+\frac{\boldsymbol{\mathrm{i}}\cdot m}{\sin\theta}Y_l^m\hat{\mathbf{\boldsymbol\phi}}\bigg) .
    $$
    The solution $ \mathbf{E}(\bfx)$ to \eqref{eq:eig} has the following Fourier expansion around $\mathbf{0}$,
    \begin{equation}\notag
    \begin{split}
      \mathbf{E}(\bfx)=\sum_{l=1}^{\infty}\sum_{m=-l}^{l}\bigg(a_l^m\cdot\mathbf{M}_l^m(\bfx)+b_l^m\cdot\mathbf{N}_l^m(\bfx)\bigg),\quad a_l^m, b_l^m \in \mathbb{C},
      \end{split}
      \end{equation}
 which (together with its derivatives) converges uniformly in  $B_{\rho_0}({\mathbf 0})$ for a sufficiently small $\rho_0\in\mathbb{R}_+$. 
        \end{lemma}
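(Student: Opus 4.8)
The plan is to reduce the vector problem to scalar Helmholtz expansions and then repackage the resulting series into the divergence-free vector wave functions $\mathbf{M}_l^m$ and $\mathbf{N}_l^m$. First I would eliminate $\mathbf{H}$ from \eqref{eq:eig}: since $\nabla\wedge\mathbf{E}=\mathbf{i}k\mathbf{H}$ and $\nabla\wedge\mathbf{H}=-\mathbf{i}k\mathbf{E}$, one gets $\nabla\wedge\nabla\wedge\mathbf{E}-k^2\mathbf{E}=\mathbf{0}$, while taking the divergence of the second equation in \eqref{eq:eig} gives $\nabla\cdot\mathbf{E}=0$ because $k\in\mathbb{R}_+$. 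Combining these with the identity $\nabla\wedge\nabla\wedge\mathbf{E}=\nabla(\nabla\cdot\mathbf{E})-\Delta\mathbf{E}$ shows that $\Delta\mathbf{E}+k^2\mathbf{E}=\mathbf{0}$, so each Cartesian component of $\mathbf{E}$ satisfies the scalar Helmholtz equation in the interior of $\Omega$, in particular in a ball $B_{\rho_0}(\mathbf{0})\Subset\Omega$.

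Second, I would invoke the classical expansion theorem for regular solutions of the Helmholtz equation: any $u$ with $\Delta u+k^2u=0$ in $B_{\rho_0}(\mathbf{0})$ admits a series $u(\bfx)=\sum_{l=0}^\infty\sum_{m=-l}^l c_l^m\, j_l(kr)\,Y_l^m(\theta,\phi)$ converging uniformly together with all its derivatives on compact subsets of $B_{\rho_0}(\mathbf{0})$; the selection of $j_l$ rather than the singular $y_l$ is forced by regularity at the origin, consistent with the power series \eqref{eq:bess sph}. Applying this to $\mathbf{E}$ and restricting to each sphere $r=\mathrm{const}$, I would decompose $\mathbf{E}$ into its radial part and its tangential part, expanding the tangential part in the orthonormal system $\{\mathbf{X}_l^m,\mathbf{Z}_l^m\}$ of tangential vector spherical harmonics (complete in the space of tangential $L^2$-fields on $\mathbb{S}^2$) and the radial part in $Y_l^m\,\boldsymbol{\hat r}$. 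This yields an ansatz $\mathbf{E}=\sum_{l,m}\big(f_l^m(r)\mathbf{X}_l^m+g_l^m(r)\mathbf{Z}_l^m+h_l^m(r)Y_l^m\,\boldsymbol{\hat r}\big)$ with radial coefficients to be pinned down.

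Third, I would feed this ansatz into $\nabla\cdot\mathbf{E}=0$ and $\Delta\mathbf{E}+k^2\mathbf{E}=\mathbf{0}$. The divergence-free constraint couples $h_l^m$ to $g_l^m$ and annihilates any purely longitudinal, gradient-type contribution of the form $\nabla\big(j_l(kr)Y_l^m\big)$, forcing the $\mathbf{Z}_l^m$-component and the radial component to occur in exactly the rigid linear combination displayed in the definition of $\mathbf{N}_l^m$ in \eqref{ww}. Meanwhile the Helmholtz equation, after separation of variables, reduces each radial coefficient to the spherical Bessel equation whose regular solution is $j_l(kr)$, and the recursions \eqref{eq:bessel} convert the arising derivative $j_l'$ and the factor $j_l(kr)/(kr)$ into precisely the coefficients appearing in $\mathbf{M}_l^m$ and $\mathbf{N}_l^m$. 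Relabelling $f_l^m\to a_l^m$ and the surviving radial/$\mathbf{Z}_l^m$ combination $\to b_l^m$ then produces $\mathbf{E}=\sum_{l=1}^\infty\sum_{m=-l}^l\big(a_l^m\mathbf{M}_l^m+b_l^m\mathbf{N}_l^m\big)$, the sum beginning at $l=1$ since $\mathbf{X}_0^m$ and $\mathbf{Z}_0^m$ vanish.

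Finally, the uniform convergence of the series together with its derivatives in $B_{\rho_0}(\mathbf{0})$ is inherited directly from the corresponding property of the scalar Helmholtz expansion used in the second step, because $\mathbf{M}_l^m$ and $\mathbf{N}_l^m$ are assembled linearly from $j_l(kr)$, $Y_l^m$ and finitely many of their derivatives. I expect the main obstacle to be the completeness and rigidity argument of the third step: one must verify that the decomposition contains no family beyond $\mathbf{M}_l^m$ and $\mathbf{N}_l^m$, equivalently that $\nabla\cdot\mathbf{E}=0$ exactly removes the longitudinal modes and locks the radial and $\mathbf{Z}_l^m$ coefficients into the fixed ratio built into \eqref{ww}. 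This relies on the orthogonality of the tangential vector spherical harmonics and on careful bookkeeping of the Legendre recursions \eqref{uu}.
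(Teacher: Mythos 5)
The paper gives no proof of this lemma at all; it is quoted verbatim from Colton--Kress \cite{CK}, and your proposal essentially reconstructs the standard argument behind that cited result: eliminate $\mathbf{H}$ to obtain the divergence-free vector Helmholtz equation, expand sphere-by-sphere in the tangential vector spherical harmonics $\mathbf{X}_l^m,\mathbf{Z}_l^m$ plus the radial harmonics $Y_l^m\boldsymbol{\hat{r}}$, and let the divergence constraint together with regularity of the spherical Bessel equation at the origin single out exactly the $\mathbf{M}_l^m$, $\mathbf{N}_l^m$ families. The only loose end is that starting the sum at $l=1$ requires, besides the vanishing of $\mathbf{X}_0^0$ and $\mathbf{Z}_0^0$, ruling out a purely radial $l=0$ term: a divergence-free radial field $h(r)\boldsymbol{\hat{r}}$ forces $h(r)=c/r^2$, which regularity at the origin kills --- a one-line fix, not a genuine gap.
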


Using \eqref{eq:bessel}, from Lemma \ref{k2}, we can derive that
    \begin{equation}
     \begin{split}\label{mix pi21}
\mathbf{E}(\bfx)=&-\sum_{l=1}^{\infty}\sum_{m=-l}^{l}\frac{1}{\sqrt{l(l+1)}}\Bigg\{ b_l^m\cdot{l(l+1)}p_l(kr)\cdot Y_l^m \cdot\hat{\boldsymbol{r}}\\
 &+ \bigg[a_l^m\cdot j_l\big(kr\big)\frac{m}{\sin\theta}Y_l^m+b_l^m\cdot
    q_l(kr)\cdot\frac{\partial{Y_l^m}}{\partial\theta}\bigg]\cdot \hat{\boldsymbol\theta}\\
       &+\boldsymbol{\mathrm{i}} \bigg[a_l^m\cdot j_l(kr) \frac{\partial{Y_l^m}}{\partial\theta}+b_l^m\cdot
     q_l(kr)\frac{ m}{\sin\theta}Y_l^m
     \bigg]\cdot\hat{\boldsymbol\phi}\Bigg\},
      \end{split}
   \end{equation}
where
	 \begin{equation}\label{eq:plql}
\begin{split}
      p_l(kr)=\frac{j_{l-1}\big(kr\big)+j_{l+1}\big(kr\big)}{2l+1},\quad q_l(kr)=\frac{(l+1)j_{l-1}\big(kr\big)-lj_{l+1}\big(kr\big)}{2l+1}.
          \end{split}
    \end{equation}

    \begin{remark}\label{i2}
    In view of \eqref{eq:bess sph}, the lowest order terms of $  p_l(kr)$ and $  q_l(kr)$ with respect to the power of $r$ are 
    $$
    \frac{k^{l-1} }{(2l+1) (2l-1)!!} r^{l-1} \mbox{ and } \frac{(l+1)k^{l-1} }{(2l+1) (2l-1)!!} r^{l-1}
    $$ 
    respectively.
 \end{remark}

\begin{lemma}\cite[Proposition 2.1.7]{krantz}\label{lem:kra}
	If the power series $\sum_{\mu } a_{\mu } {\mathbf x}^\mu $ converges at a point ${\mathbf x}_0$, then it converges uniformly and absolutely on compact subsets of $U(\bf{x}_0)$, where
	$$
	U({\mathbf x}_0)=\{(r_1 x_{0,1},\ldots, r_n x_{0,n}):-1<r_j<1,j=1,\ldots,n\}, \,    {\mathbf x}_0=(x_{0,1},\ldots,  x_{0,n}) \in {\mathbb R}^n.
	$$
\end{lemma}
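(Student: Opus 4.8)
The plan is to establish the result by the classical multivariate Abel lemma together with the Weierstrass $M$-test; the only analytic input needed is that convergence at a single point forces the general term of the series to be bounded, after which everything reduces to a uniform geometric majorization.

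First I would exploit the hypothesis that $\sum_{\mu} a_{\mu}\mathbf{x}_0^{\mu}$ converges. Since the terms of any convergent series tend to zero, the family $\bigl(a_{\mu}\mathbf{x}_0^{\mu}\bigr)_{\mu}$ is bounded, so there is a constant $M>0$ with $\abs{a_{\mu}\mathbf{x}_0^{\mu}}\le M$ for every multi-index $\mu=(\mu_1,\dots,\mu_n)$. This is the sole place where the assumption of the lemma is used; all subsequent steps are purely a uniform estimation on a compact set.

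Next, fix a compact set $K\Subset U(\mathbf{x}_0)$. For each index $j$ with $x_{0,j}\neq 0$, the $j$-th coordinate of any $\mathbf{x}\in U(\mathbf{x}_0)$ has the form $r_j x_{0,j}$ with $-1<r_j<1$; by compactness of $K$ there exists a single contraction factor $\rho\in(0,1)$ such that $\abs{x_j}\le \rho\,\abs{x_{0,j}}$ for all $\mathbf{x}\in K$ and all such $j$. For any index $j$ with $x_{0,j}=0$, the corresponding coordinate of every point of $U(\mathbf{x}_0)$ vanishes, so $x_j=0$ on $K$; the terms carrying a positive power $\mu_j$ of such a coordinate vanish identically and can be discarded, while the surviving factors are unaffected. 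Extracting the single $\rho$ that works simultaneously over all of $K$ and over all coordinates is the only genuinely load-bearing point of the argument, and it is precisely where compactness (rather than mere containment in the open box) is required; apart from the multi-index bookkeeping and this degenerate-coordinate caveat, no real obstacle arises.

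With $\rho$ fixed, I would bound each term uniformly on $K$ by
\[
\abs{a_{\mu}\mathbf{x}^{\mu}}=\abs{a_{\mu}\mathbf{x}_0^{\mu}}\prod_{j:\,x_{0,j}\neq 0}\Big|\frac{x_j}{x_{0,j}}\Big|^{\mu_j}\le M\,\rho^{\abs{\mu}},\qquad \abs{\mu}:=\mu_1+\cdots+\mu_n,
\]
and then sum the geometric majorant:
\[
\sum_{\mu} M\,\rho^{\abs{\mu}}=M\Big(\sum_{k=0}^{\infty}\rho^{k}\Big)^{n}=\frac{M}{(1-\rho)^{n}}<\infty,
\]
a finite bound independent of $\mathbf{x}\in K$. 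The Weierstrass $M$-test then yields absolute and uniform convergence of $\sum_{\mu} a_{\mu}\mathbf{x}^{\mu}$ on $K$, and since $K\Subset U(\mathbf{x}_0)$ was arbitrary this is exactly the assertion of the lemma.
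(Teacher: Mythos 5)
Your proof is correct: the paper does not prove this lemma itself but quotes it from Krantz--Parks, and your argument (boundedness of the terms $a_\mu\mathbf{x}_0^\mu$ from convergence at $\mathbf{x}_0$, a uniform contraction factor $\rho<1$ on the compact set, geometric majorization by $M\rho^{|\mu|}$, and the Weierstrass $M$-test) is precisely the standard Abel-lemma proof given in that reference. Your handling of the degenerate coordinates $x_{0,j}=0$ is also the right caveat, so nothing is missing.
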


Using Definition \ref{def:3}, in view of \eqref{mix pi21}, we can obtain the following lemma.

\begin{lemma}\label{lem:vani}
	Let $\mathbf E$ be a solution to \eqref{eq:eig}. Recall that $\mathbf E$ has the radial wave expansion \eqref{mix pi21} in $B_{\rho_0}(\bf0)$. For a fixed $N \in \mathbb N$, if
	\begin{equation}\label{eq:216 cond1}
		a_l^m=b_l^m=0,\quad m\in [l]_0, \quad l=1,2,\ldots,N,
	\end{equation}
	where $[l]_0$  is defined in \eqref{eq:lindex}, then
	\begin{equation}\label{eq:217 cond}
		\mathrm{Vani}(\mathbf{E}; \mathbf{0})\geq N.
	\end{equation}
Conversely, if there exits $N\in \mathbb N$ such that \eqref{eq:217 cond} holds then we have \eqref{eq:216 cond1}.
\end{lemma}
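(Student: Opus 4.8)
The plan is to convert the integral notion of vanishing order in Definition~\ref{def:3} into a statement about the lowest power of $r$ carried by the spherical-wave expansion \eqref{mix pi21}, and then to read off the coefficients mode by mode. The decisive elementary fact is the radial scaling: for a continuous angular profile $\mathbf g(\theta,\phi)$ one has
$\int_{B_\rho(\mathbf 0)}|r^s\mathbf g|\,\rmd\bfx=\frac{\rho^{s+3}}{s+3}\int_{\mathbb S^2}|\mathbf g|\,\rmd\omega+\oh(\rho^{s+3})$.
Hence if $\mathbf E=\Oh(r^N)$ near $\mathbf 0$ then $\rho^{-m}\int_{B_\rho(\mathbf 0)}|\mathbf E|\,\rmd\bfx=\Oh(\rho^{N+3-m})\to0$ for every $m\le N+2$, so $\mathrm{Vani}(\mathbf E;\mathbf 0)\ge N$; conversely, if the genuine leading power of $\mathbf E$ equals $s$, the nonnegativity of $|\mathbf E|$ forbids any angular cancellation, the constant $\int_{\mathbb S^2}|\mathbf g_s|\,\rmd\omega$ is strictly positive, and one reads off $\mathrm{Vani}(\mathbf E;\mathbf 0)=s$. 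Thus the lemma reduces to matching the vanishing of $a_l^m,b_l^m$ against the lowest power of $r$ appearing in \eqref{mix pi21}.

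For the forward implication I would simply truncate. Assuming \eqref{eq:216 cond1}, the series \eqref{mix pi21} runs only over $l\ge N+1$. By \eqref{eq:bess sph} the factor $j_l(kr)$ multiplying each $a_l^m$ is $\Oh(r^l)$, while by Remark~\ref{i2} the factors $p_l(kr),q_l(kr)$ multiplying each $b_l^m$ in \eqref{eq:plql} are $\Oh(r^{l-1})$; hence every summand of index $l$ is $\Oh(r^{l-1})=\Oh(r^N)$. The uniform and absolute convergence guaranteed by Lemma~\ref{k2} together with Lemma~\ref{lem:kra} lets me add these bounds and conclude $|\mathbf E(\bfx)|\le C\,r^N$ on a small ball $B_{\rho_0}(\mathbf 0)$. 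Feeding this into Definition~\ref{def:3} gives $\rho^{-m}\int_{B_\rho(\mathbf 0)}|\mathbf E|\,\rmd\bfx\le C\rho^{N+3-m}\to0$ for $m=0,1,\ldots,N+2$, which is exactly \eqref{eq:217 cond}.

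For the converse I would extract the coefficients by angular projection. Regrouping \eqref{mix pi21} by powers of $r$ and invoking the first paragraph, $\mathrm{Vani}(\mathbf E;\mathbf 0)\ge N$ forces every homogeneous $r$-component of $\mathbf E$ of order $<N$ to vanish. Testing the radial part $\mathbf E\cdot\hat{\boldsymbol r}=-\sum_{l,m}\sqrt{l(l+1)}\,b_l^m\,p_l(kr)\,Y_l^m$ against $Y_{l'}^{m'}$ and using the orthogonality of the spherical harmonics (cf. Lemma~\ref{base21} and \eqref{ortho3}) isolates $b_{l'}^{m'}p_{l'}(kr)$; since $p_{l'}(kr)\sim r^{l'-1}$, the requirement that this be $\Oh(r^N)$ forces $b_{l'}^{m'}=0$ for all $l'\le N$. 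Likewise, projecting the tangential part onto the transverse vector harmonic $\mathbf X_{l'}^{m'}$ of Lemma~\ref{k2} annihilates every $b_l^m$-term (which lives along $\mathbf Z_l^m$ and $\hat{\boldsymbol r}$) and isolates $a_{l'}^{m'}j_{l'}(kr)\sim a_{l'}^{m'}r^{l'}$, forcing the vanishing of the remaining $a_{l'}^{m'}$ and thereby establishing \eqref{eq:216 cond1}.

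The step I expect to be the main obstacle is the decoupling of the modes. A priori the order-$r^N$ part of $\mathbf E$ mixes $a_N^m$ (entering through $j_N\sim r^N$) with the $b_{N+1}^m$-contribution (entering through $p_{N+1},q_{N+1}\sim r^N$), and one must rule out that these conspire to cancel. This is precisely what the mutual orthogonality of the vector spherical harmonics across distinct degrees $l$ and orders $m$, together with the separation of the radial direction $Y_l^m\hat{\boldsymbol r}$ from the transverse fields $\mathbf X_l^m$ (carrying $a_l^m$) and $\mathbf Z_l^m$ (carrying $b_l^m$), takes care of; combined with the positivity of $|\mathbf E|$, which prevents angular cancellation in the $L^1$-norm used in Definition~\ref{def:3}, it allows each coefficient to be read off independently from the lowest-order behaviour of its associated spherical Bessel factor.
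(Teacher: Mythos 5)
Your forward implication is correct and is essentially the paper's own argument: impose \eqref{eq:216 cond1}, observe via Remark~\ref{i2} that the truncated series \eqref{mix pi21} carries lowest power $r^{N}$, and use the uniform and absolute convergence from Lemmas~\ref{k2} and \ref{lem:kra} to control $\mathbf{E}/r^{N}$ and hence the volume integrals in Definition~\ref{def:3}. Your reduction of $\mathrm{Vani}(\mathbf{E};\mathbf{0})\ge N$ to the vanishing of every homogeneous component of $\mathbf{E}$ of order $<N$ (using positivity of $|\mathbf{E}|$ to rule out angular cancellation) is also sound.

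The genuine gap is in the converse, and it sits exactly at the coefficients $a_N^m$. The hypothesis $\mathrm{Vani}(\mathbf{E};\mathbf{0})\ge N$ constrains only the homogeneous components of order $0,1,\ldots,N-1$; it says nothing about the order-$r^{N}$ part. Your projection onto $\mathbf{X}_{l'}^{m'}$ isolates $a_{l'}^{m'}j_{l'}(kr)$, and since $j_{l'}(kr)\sim r^{l'}$ by \eqref{eq:bess sph}, this forces $a_{l'}^{m'}=0$ only for $l'\le N-1$; for $l'=N$ the isolated term $a_N^{m'}j_N(kr)\,\mathbf{X}_N^{m'}$ is already $\Oh(r^{N})$, so the hypothesis imposes no constraint at all, and your step ``forcing the vanishing of the remaining $a_{l'}^{m'}$'' breaks down. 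No finer decoupling can repair this, because the implication is simply not available at that index: the single mode $\mathbf{E}=\mathbf{M}_N^{m_0}=j_N(kr)\mathbf{X}_N^{m_0}$ is a bona fide solution of \eqref{eq:eig} with $|\mathbf{E}|\sim c\,r^{N}$, hence $\int_{B_\rho(\mathbf{0})}|\mathbf{E}|\,\rmd\mathbf{x}\sim C\rho^{N+3}$ with $C>0$, and Definition~\ref{def:3} gives $\mathrm{Vani}(\mathbf{E};\mathbf{0})\geq N$, while $a_N^{m_0}\neq 0$. What your projection argument actually proves---and this is the sharp form of the converse, reflecting the asymmetry between $j_l\sim r^{l}$ and $p_l,q_l\sim r^{l-1}$ in \eqref{eq:plql}---is that $\mathrm{Vani}(\mathbf{E};\mathbf{0})\ge N$ implies $b_l^m=0$ for $l\le N$, $m\in[l]_0$, together with $a_l^m=0$ for $l\le N-1$, $m\in[l]_0$; obtaining $a_N^m=0$ would require the stronger hypothesis $\mathrm{Vani}(\mathbf{E};\mathbf{0})\ge N+1$. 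Note that the paper dismisses this direction with ``similar arguments'' and provides no details, so the discrepancy you have uncovered lies in the statement of the lemma itself rather than merely in your argument; your orthogonality route is the natural one, but it should be paired with the corrected conclusion above.
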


\begin{proof} From Lemma \ref{lem:kra}, we know that \eqref{mix pi21} converges uniformly and absolutely in  $B_{\rho_1}({\mathbf 0})$, where $0<\rho_1<\rho_0$.
	Substituting \eqref{eq:216 cond1} into \eqref{mix pi21}, we have
	\begin{equation}
     \begin{split}\label{eq:E N+1}
\mathbf{E}(\bfx)=&-\sum_{l=N+1}^{\infty}\sum_{m=-l}^{l}\frac{1}{\sqrt{l(l+1)}}\Bigg\{ b_l^m\cdot{l(l+1)}p_l(kr)\cdot Y_l^m\cdot\hat{\boldsymbol{r}}\\
 &+ \bigg[a_l^m\cdot j_l\big(kr\big)\frac{m}{\sin\theta}Y_l^m+b_l^m\cdot
    q_l(kr)\cdot\frac{\partial{Y_l^m}}{\partial\theta}\bigg]\cdot \hat{\boldsymbol\theta}\\
       &+\boldsymbol{\mathrm{i}} \bigg[a_l^m\cdot j_l(kr) \frac{\partial{Y_l^m}}{\partial\theta}+b_l^m\cdot
     q_l(kr)\frac{ m}{\sin\theta}Y_l^m
     \bigg]\cdot\hat{\boldsymbol\phi}\Bigg\}.
      \end{split}
   \end{equation}
   From Remark \ref{i2}, the lowest order of $r$ with respect to the power of $r$ in \eqref{eq:E N+1} is $N$. Therefore,
   	\begin{equation}
     \begin{split}\label{eq:E N+1}
\frac{\mathbf{E}(\bfx)}{r^N}=&-\sum_{l=N+1}^{\infty}\sum_{m=-l}^{l}\frac{1}{\sqrt{l(l+1)}}\Bigg\{ b_l^m\cdot{l(l+1)}\frac{ p_l(kr)}{r^N }\cdot Y_l^m\cdot\hat{\boldsymbol{r}}\\
 &+ \bigg[a_l^m\cdot \frac{j_l\big(kr\big)}{r^N}\frac{m}{\sin\theta}Y_l^m+b_l^m\cdot
    \frac{q_l(kr)}{r^N}\cdot\frac{\partial{Y_l^m}}{\partial\theta}\bigg]\cdot \hat{\boldsymbol\theta}\\
       &+\boldsymbol{\mathrm{i}} \bigg[a_l^m\cdot \frac{j_l(kr)}{r^N} \frac{\partial{Y_l^m}}{\partial\theta}+b_l^m\cdot
     \frac{q_l(kr)}{r^N}\frac{ m}{\sin\theta}Y_l^m
     \bigg]\cdot\hat{\boldsymbol\phi}\Bigg\}
      \end{split}
   \end{equation}
converges uniformly and absolutely in  $B_{\rho_1}({\mathbf 0})$, which implies
\begin{equation}\label{eq:221}
	\left| \frac{\mathbf{E}(\bfx)}{r^N} \right| =	\Oh(1), \quad 	\mbox{ as } r \rightarrow +0 .
\end{equation}
In view of  Definition \ref{def:3}, by virtue of \eqref{eq:221},  we have
\begin{equation}\notag 
	\lim_{\rho\rightarrow +0} \frac{1}{\rho^m} \int_{B_\rho(\mathbf{0} )}\, |\mathbf{E}(\mathbf{x})|\, {\rm d} \mathbf{x}\leq \lim_{\rho\rightarrow +0} \frac{\rho^{N+2}}{\rho^m} \int_{0}^\rho\int_{0}^\pi \int_{0}^{2\pi} \left| \frac{\mathbf{E}(\bfx)}{r^N} \right| {\rmd}r {\rmd}\theta {\rmd}\phi=0,
\end{equation}
which holds for $m=0,1,\ldots, {{N+2}},$ and this proves \eqref{eq:217 cond}. The other direction of the conclusion can be proved by using similar arguments.
\end{proof}
\begin{lemma}\label{eq:e1e21}
Let $\mathbf E$ be a solution to \eqref{eq:eig}. Recall that $\mathbf{E}$  has the radial wave expansion \eqref{mix pi21} in $B_{\rho_0}(\mathbf{0})$. Consider an edge-corner ${\mathcal E}(\widetilde\Pi_1, \widetilde\Pi_2,\bsl)\Subset \Omega$ associated with $\mathbf{E}$. Recall that $\nu_i$ defined in \eqref{l1} are the outward unit normal vectors to $\Pi_i$, $i=1,2$. Then
\begin{equation}\label{mix pi2}
\begin{split}
&	\nu_1 \wedge \mathbf{E}|_{\widetilde\Pi_1}=  \sum_{l=1}^{\infty}\sum_{m=-l}^{l}-\frac{1}{\sqrt{l(l+1)}}\Bigg\{b_l^ml(l+1)p_l(kr)Y_l^m\Big|_{\phi=0}
  \boldsymbol{e_1}(\theta,0)\\
    & \hspace{2cm} +\bigg(
    a_l^mj_l\big(kr\big)\frac{m}{\sin\theta}Y_l^m\Big|_{\phi=0}+b_l^m\cdot q_l(kr)
   \frac{\partial{Y_l^m}}{\partial\theta}\Big|_{\phi=0}
    \bigg)\boldsymbol{e_2}(\theta,0)\Bigg\},\\
 &\nu_2 \wedge \mathbf{E}|_{\widetilde\Pi_2}= \sum_{l=1}^{\infty}\sum_{m=-l}^{l}-\frac{1}{\sqrt{l(l+1)}}\Bigg\{b_l^ml(l+1)p_l(kr)Y_l^m\Big|_{\phi=\phi_0}
   \boldsymbol{e_1}(\theta,\phi_0) \\
    &\hspace{2cm} +\bigg(
     a_l^mj_l\big(kr\big)\frac{m}{\sin\theta}Y_l^m \Big|_{\phi=\phi_0}
     +b_l^m\cdot q_l(kr)\frac{\partial{Y_l^m}}{\partial\theta}\Big|_{\phi=\phi_0}\bigg)
 \boldsymbol{e_2}(\theta,\phi_0)\Bigg\},
  \end{split}
 \end{equation}
 where
 \begin{equation}\label{eq:e1e2}
 	 \boldsymbol{e}_{1}\left(\theta, \phi\right)=\left[\begin{array}{c}\cos \phi \cos \theta \\ \sin \phi \cos \theta \\ -\sin \theta\end{array}\right] \mbox{ and }  \boldsymbol{e}_{2}\left(\theta, \phi\right)=-\left[\begin{array}{c} \cos \phi \sin \theta \\ \sin \phi \sin \theta \\ \cos \theta\end{array}\right],
 \end{equation}
 are linearly independent for any $\theta$ and $\phi$.
Furthermore, we have
\begin{equation}\label{gg}
\begin{split}
	 &\nu_1 \wedge(\nabla\wedge \mathbf{E}|_{\widetilde\Pi_1})={\mathbf{i}k}\sum_{l=1}^{\infty}\sum_{m=-l}^{l}\frac{1}{\sqrt{l(l+1)}}\Bigg\{a_l^ml(l+1)p_l(kr)Y_l^m \Big|_{\phi=0}
\cdot\boldsymbol{e_1}(\theta,0)\\
    &\hspace{3cm} +\bigg(- b_l^mj_l(kr)\cdot\frac{m}{\sin\theta}Y_l^m+a_l^m\cdot
    q_l(kr)\cdot\frac{\partial{Y_l^m}}{\partial\theta}\Big|_{\phi=0}\bigg)
    \cdot\boldsymbol{e_2}(\theta,0)\Bigg\},\\
     &\nu_2 \wedge(\nabla\wedge \mathbf{E}|_{\widetilde\Pi_2})={\mathbf{i}k}\sum_{l=1}^{\infty}\sum_{m=-l}^{l}\frac{1}{\sqrt{l(l+1)}}\Bigg\{a_l^ml(l+1)p_l(kr)Y_l^m \Big|_{\phi=\phi_0}
\cdot\boldsymbol{e_1}(\theta,\phi_0)\\
    &\hspace{3cm}+\bigg( -b_l^mj_l(kr)\cdot\frac{m}{\sin\theta}Y_l^m \Big|_{\phi=\phi_0}+a_l^m
    q_l(kr)\cdot\frac{\partial{Y_l^m}}{\partial\theta}\Big|_{\phi=\phi_0}\bigg)
    \cdot\boldsymbol{e_2}(\theta,\phi_0)\Bigg\}.
     \end{split}
     \end{equation}
\end{lemma}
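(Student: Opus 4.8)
The plan is to prove both identities by directly substituting the radial-wave expansion \eqref{mix pi21} of $\mathbf E$ and evaluating the wedge products against the explicit normals \eqref{l1}. The observation that organizes everything is that the two target fields of \eqref{eq:e1e2} are nothing but two members of the spherical orthonormal frame \eqref{w1}: comparing components one sees $\boldsymbol e_1(\theta,\phi)=\hat{\boldsymbol\theta}$ and $\boldsymbol e_2(\theta,\phi)=-\hat{\boldsymbol r}$ for every $(\theta,\phi)$. In particular $\boldsymbol e_1$ and $\boldsymbol e_2$ are orthonormal, hence linearly independent for all $\theta,\phi$, which settles that part of the statement at once.

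Next I would record the elementary but decisive geometric fact that each normal is, up to sign, the azimuthal frame vector on its own face: $\nu_1=(0,-1,0)^\top=-\hat{\boldsymbol\phi}\big|_{\phi=0}$ and $\nu_2=(-\sin\phi_0,\cos\phi_0,0)^\top=\hat{\boldsymbol\phi}\big|_{\phi=\phi_0}$. Consequently $\nu_j\wedge\hat{\boldsymbol\phi}=\mathbf 0$ on $\widetilde\Pi_j$, so the $\hat{\boldsymbol\phi}$-component of $\mathbf E$ drops out of the tangential trace and only the $\hat{\boldsymbol r}$- and $\hat{\boldsymbol\theta}$-parts survive. Using the right-handed frame relations $\hat{\boldsymbol\phi}\wedge\hat{\boldsymbol r}=\hat{\boldsymbol\theta}$ and $\hat{\boldsymbol\phi}\wedge\hat{\boldsymbol\theta}=-\hat{\boldsymbol r}$, together with $\boldsymbol e_1=\hat{\boldsymbol\theta}$, $\boldsymbol e_2=-\hat{\boldsymbol r}$, one finds on $\widetilde\Pi_2$ that $\nu_2\wedge\hat{\boldsymbol r}=\boldsymbol e_1$ and $\nu_2\wedge\hat{\boldsymbol\theta}=\boldsymbol e_2$, and analogously on $\widetilde\Pi_1$ with the sign dictated by $\nu_1=-\hat{\boldsymbol\phi}$. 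Reading off the coefficients of $\hat{\boldsymbol r}$ and $\hat{\boldsymbol\theta}$ from \eqref{mix pi21} and restricting $\phi$ to $0$ and $\phi_0$ then produces \eqref{mix pi2}; the only delicate point is the careful bookkeeping of the signs supplied by these cross-product identities.

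For the second identity \eqref{gg} I would first produce the radial-wave expansion of $\nabla\wedge\mathbf E$ and then repeat verbatim the trace computation of the previous paragraph. To get that expansion I would use the Maxwell relation $\nabla\wedge\mathbf E=\mathbf i k\mathbf H$ together with the fact that $\mathbf H$ again solves \eqref{eq:eig} and hence admits an expansion of exactly the form \eqref{mix pi21}; equivalently, I would apply the curl identities for the vector spherical waves $\mathbf M_l^m,\mathbf N_l^m$ of \eqref{ww} (expressing $\nabla\wedge\mathbf M_l^m$ and $\nabla\wedge\mathbf N_l^m$ back in the $\mathbf M,\mathbf N$ basis), which transforms the coefficient pair $(a_l^m,b_l^m)$ into the coefficient pair of $\nabla\wedge\mathbf E$. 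An alternative, entirely self-contained route is to apply the spherical-coordinate curl directly to \eqref{mix pi21} and simplify with the Bessel recursions \eqref{eq:bessel} and the Legendre recursions \eqref{uu}. Either way, once the expansion of $\nabla\wedge\mathbf E$ is in hand, the same two facts---the vanishing of the $\hat{\boldsymbol\phi}$-contribution and the cross-product identities for $\hat{\boldsymbol r},\hat{\boldsymbol\theta}$---deliver \eqref{gg}.

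The genuinely delicate step, and the one I expect to be the main obstacle, is the computation of $\nabla\wedge\mathbf E$ in the $\mathbf M_l^m,\mathbf N_l^m$ basis with the correct prefactors. Because the definitions in \eqref{ww} carry extra factors of $\mathbf i$, the curl does not simply interchange the roles of $a_l^m$ and $b_l^m$; it mixes them with genuine sign changes (this is precisely the source of the $-b_l^m j_l(kr)\tfrac{m}{\sin\theta}Y_l^m$ term appearing against $+a_l^m q_l(kr)\tfrac{\partial Y_l^m}{\partial\theta}$ in \eqref{gg}), and keeping track of the factor $\mathbf i k$ and of each sign is where essentially all the work lies. Everything else is linear bookkeeping with the orthonormal frame and the restriction of $\phi$ to the two faces.
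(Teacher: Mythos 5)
Your plan follows essentially the same route as the paper's proof: substitute the radial wave expansion \eqref{mix pi21}, compute the cross products of $\nu_j$ with the spherical frame $\hat{\boldsymbol r},\hat{\boldsymbol\theta},\hat{\boldsymbol\phi}$ so that the $\hat{\boldsymbol\phi}$-contribution drops out, and obtain \eqref{gg} from the curl identities $\nabla\wedge\mathbf M_l^m=-\mathbf i k\,\mathbf N_l^m$, $\nabla\wedge\mathbf N_l^m=\mathbf i k\,\mathbf M_l^m$ applied to the expansion in the basis \eqref{ww}. Your identifications $\boldsymbol e_1=\hat{\boldsymbol\theta}$, $\boldsymbol e_2=-\hat{\boldsymbol r}$, $\nu_1=-\hat{\boldsymbol\phi}\big|_{\phi=0}$, $\nu_2=+\hat{\boldsymbol\phi}\big|_{\phi=\phi_0}$ are all correct, and they make the trace computation cleaner than the paper's componentwise evaluation in \eqref{eq:nu2 r} (which, as your frame identity predicts, carries a typo: the second entry of $\nu_2\wedge\hat{\boldsymbol r}$ should be $\sin\phi_0\cos\theta$, not $\sin\phi_0\sin\theta$). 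The linear-independence claim is indeed immediate from orthonormality of the frame.

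There is, however, one concrete point you assert away, and it is exactly the ``sign bookkeeping'' you yourself flag as delicate. Since $\nu_1=-\hat{\boldsymbol\phi}\big|_{\phi=0}$ while $\nu_2=+\hat{\boldsymbol\phi}\big|_{\phi=\phi_0}$, your own frame relations give
\[
\nu_1\wedge\hat{\boldsymbol r}\big|_{\phi=0}=-\hat{\boldsymbol\theta}=-\boldsymbol e_1(\theta,0),\qquad
\nu_1\wedge\hat{\boldsymbol\theta}\big|_{\phi=0}=\hat{\boldsymbol r}=-\boldsymbol e_2(\theta,0),
\]
so both cross products on $\widetilde\Pi_1$ carry a minus sign relative to those on $\widetilde\Pi_2$. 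Feeding this into \eqref{mix pi21} therefore produces the $\widetilde\Pi_1$ identities in \eqref{mix pi2} and \eqref{gg} with overall sign \emph{opposite} to the one displayed in the lemma, which writes the $\widetilde\Pi_1$ and $\widetilde\Pi_2$ formulas with identical signs; that cannot happen, precisely because $\nu_1$ is anti-parallel and $\nu_2$ parallel to $\hat{\boldsymbol\phi}$. (The paper's own proof conceals this by carrying out only the $\widetilde\Pi_2$ case explicitly and declaring the $\widetilde\Pi_1$ case ``similar''.) So your method is sound and proves the statement up to this sign, but your claim that reading off coefficients ``produces \eqref{mix pi2}'' is not literally true for the first equations: you must either carry the extra minus sign explicitly through the $\widetilde\Pi_1$ computation and flag the discrepancy with the displayed formulas, or you will appear to have verified an identity that your own cross-product relations contradict.
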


 \begin{proof} Using the fact that $\phi=\phi_0$ for $\bfx=(r,\theta, \phi) \in \Pi_2$, it is easy to see that
\begin{equation}
\label{eq:nu2 r}
\begin{split}
 & \nu_2 \wedge (\hat{\boldsymbol{r}}|_{\phi=
\phi_0})=\begin{bmatrix}
	\cos\phi_0\cos\theta\\ \sin\phi_0\sin\theta \\-\sin\theta\end{bmatrix} ,
	\quad\  \nu_2 \wedge (\hat{\boldsymbol\theta}|_{\phi=\phi_0})=\begin{bmatrix}
		-\cos\phi_0\sin\theta\\ -\sin\phi_0\sin\theta \\-\cos\theta
	\end{bmatrix},\quad  \nu_2 \wedge (\hat{\boldsymbol\phi}|_{\phi=\phi_0})={\mathbf 0},
 \end{split}
 \end{equation}
 from which we can derive the second equation of \eqref{mix pi2}. The first equation of \eqref{mix pi2} can be obtained in a similar way.

 Recall that $\mathbf{M}_l^m(\bfx)$ and $\mathbf{N}_l^m(\bfx)$ are defined in  \eqref{ww}. Using the identity
 $
 \nabla\wedge \mathbf{M}_l^m(\bfx)=-\mathbf{i}k \mathbf{N}_l^m(\bfx)$ and $ \nabla\wedge \mathbf{N}_l^m(\bfx)=\mathbf{i}k\mathbf{M}_l^m(\bfx)
 $  (cf. \cite{CK})
 we can obtain that
 \begin{align}
\nabla\wedge \mathbf{E}|_{\widetilde\Pi_1}=\mathbf{i}k\sum_{l=1}^{\infty}\sum_{m=-l}^{l}&\frac{1}{\sqrt{l(l+1)}}\Bigg\{a_l^m\cdot{l(l+1)}p_l(kr)Y_l^m\cdot\nu_1\wedge\hat{\boldsymbol{r}}|_{\phi=0}\notag\\
&+\bigg(-b_l^mj_l(kr)\frac{m}{\sin\theta}Y_l^m+a_l^mq_l(kr)\frac{\partial{Y_l^m}}{\partial\theta}\bigg)\cdot\nu_1\wedge\hat{\boldsymbol\theta}|_{\phi=0}\notag\\
 &+\bigg(-b_l^mj_l(kr)\boldsymbol{\mathrm{i}}\frac{\partial{Y_l^m}}{\partial\theta}+a_l^m\cdot
 q_l(kr)\frac{\boldsymbol{\mathrm{i}}m}{\sin\theta}Y_l^m\bigg)\cdot\nu_1\wedge\hat{\boldsymbol\phi}|_{\phi=0}\Bigg\},\notag\\
 	\nabla\wedge \mathbf{E}|_{\widetilde\Pi_2}=\mathbf{i}k\sum_{l=1}^{\infty}\sum_{m=-l}^{l}&\frac{1}{\sqrt{l(l+1)}}\Bigg\{a_l^m\cdot{l(l+1)}p_l(kr)Y_l^m\cdot\nu_2\wedge\hat{\boldsymbol{r}}|_{\phi=\phi_0}\notag\\
 &+\bigg(-b_l^mj_l(kr)\frac{m}{\sin\theta}Y_l^m+a_l^mq_l(kr)\frac{\partial{Y_l^m}}{\partial\theta}\bigg)\nu_2\wedge\hat{\boldsymbol\theta}|_{\phi=\phi_0}\notag\\
 &+\bigg(-b_l^mj_l(kr)\boldsymbol{\mathrm{i}}\frac{\partial{Y_l^m}}{\partial\theta}+a_l^m\cdot
 q_l(kr)\frac{\boldsymbol{\mathrm{i}}m}{\sin\theta}Y_l^m\bigg)\nu_2\wedge\hat{\boldsymbol\phi}|_{\phi=\phi_0}\Bigg\}.\label{eq:curl E}
 \end{align}
 Combing \eqref{eq:curl E} with \eqref{eq:nu2 r}, together with straightforward though a bit tedious calculations, one can deduce the second equation of \eqref{gg}. The first equation of \eqref{gg} can be shown in a similar manner. 
 
 The proof is complete. 
 \end{proof}

\begin{lemma}
Let $\mathbf E$ be a solution to \eqref{eq:eig}. Recall that $\mathbf{E}$  has the radial wave expansion \eqref{mix pi21} in $B_{\rho_0}(\mathbf{0})$. Consider an edge-corner ${\mathcal E}(\widetilde\Pi_1, \widetilde\Pi_2,\bsl)\Subset \Omega$ associated with $\mathbf{E}$. Recall that $\nu_i$ defined in \eqref{l1} are the outward unit normal vectors to $\Pi_i$, $i=1,2$.  Assume that $ \boldsymbol{ \eta}_1,\boldsymbol{ \eta}_2 $ belong to the class $\mathcal{A}(\bsl)$. Then we have
\begin{align}
    &\nu_1 \wedge (\nabla\wedge\mathbf{E}|_{\widetilde{ \Pi}_1})+\boldsymbol{ \eta}_1(\nu_1 \wedge\mathbf{E}|_{\widetilde{ \Pi}_1})\wedge\nu_1\notag\\
    =&\sum_{l=1}^{\infty}\sum_{m=-l}^{l}  \frac{1}{\sqrt{l(l+1)}}\Bigg\{\bigg(
    \mathbf{i}ka_l^ml(l+1)p_l(kr)Y_l^m -\boldsymbol{ \eta}_1 a_l^mj_l(kr)\frac{m}{\sin\theta}Y_l^m
   \notag \\
    &-\boldsymbol{ \eta}_1 b_l^mq_l(kr)\frac{\partial{Y_l^m}}{\partial\theta}\bigg) \boldsymbol{e_1}(\theta,0)+\bigg(-\mathbf{i}kb_l^mj_l(kr)\frac{m}{\sin\theta}Y_l^m+\boldsymbol{\mathrm{i}}ka_l^m
    q_l(kr)\frac{\partial{Y_l^m}}{\partial\theta} \notag\\
    &
    +\boldsymbol{ \eta}_1 b_l^ml(l+1)p_l(kr)Y_l^m \bigg)\cdot\boldsymbol{e_2}(\theta,0)\Bigg\},\label{ss1}\\
    &\nu_1 \wedge (\nabla\wedge\mathbf{E}|_{\widetilde{ \Pi}_1})+\boldsymbol{ \eta}_1(\nu_1 \wedge\mathbf{E}|_{\widetilde{ \Pi}_1})\wedge\nu_1\notag\\
    =&\sum_{l=1}^{\infty}\sum_{m=-l}^{l}  \frac{1}{\sqrt{l(l+1)}}\Bigg\{\bigg(
    \mathbf{i}ka_l^ml(l+1)p_l(kr)Y_l^m -\boldsymbol{ \eta}_1 a_l^mj_l(kr)\frac{m}{\sin\theta}Y_l^m\notag
    \\
    &-\boldsymbol{ \eta}_1 b_l^mq_l(kr)\frac{\partial{Y_l^m}}{\partial\theta}\bigg) \boldsymbol{e_1}(\theta,0)+\bigg(-\mathbf{i}kb_l^mj_l(kr)\frac{m}{\sin\theta}Y_l^m+\boldsymbol{\mathrm{i}}ka_l^m
    q_l(kr)\frac{\partial{Y_l^m}}{\partial\theta}\notag \\
    &
    +\boldsymbol{ \eta}_1 b_l^ml(l+1)p_l(kr)Y_l^m \bigg)\cdot\boldsymbol{e_2}(\theta,0)\Bigg\},\label{ss1}
\end{align}

    and
\begin{equation}\label{ss2}
\begin{split}
    &\nu_2 \wedge (\nabla\wedge\mathbf{E}|_{\widetilde{ \Pi}_2})+\boldsymbol{ \eta}_2(\nu_2 \wedge\mathbf{E}|_{\widetilde{ \Pi}_2})\wedge\nu_2\\
    =&\sum_{l=1}^{\infty}\sum_{m=-l}^{l} \frac{1}{\sqrt{l(l+1)}}\Bigg\{\bigg(
   \mathbf{i}ka_l^ml(l+1)p_l(kr)Y_l^m  -\boldsymbol{ \eta}_2 a_l^mj_l(kr)\frac{m}{\sin\theta}Y_l^m\\
   & -\boldsymbol{ \eta}_2 b_l^mq_l(kr)\frac{\partial{Y_l^m}}{\partial\theta}\bigg)
   \boldsymbol{e_1}(\theta,\phi_0) +\bigg(-\mathbf{i}kb_l^mj_l(kr)\frac{m}{\sin\theta}Y_l^m+\boldsymbol{\mathrm{i}}ka_l^m
    q_l(kr)\frac{\partial{Y_l^m}}{\partial\theta}\\
    &+\boldsymbol{ \eta}_2 b_l^ml(l+1)p_l(kr)Y_l^m\bigg)
   \cdot\boldsymbol{e_2}(\theta,\phi_0)\Bigg\},
   \end{split}
      \end{equation}
      where $\boldsymbol{e_1}(\theta, 0)$, $\boldsymbol{e_2}(\theta,0)$   $\boldsymbol{e_1}(\theta,\phi_0)$ and $\boldsymbol{e_2}(\theta,\phi_0)$ are defined in \eqref{eq:e1e2}.
      \end{lemma}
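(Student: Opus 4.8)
The plan is to obtain this identity as an almost immediate corollary of the previous lemma, reducing everything to one elementary geometric computation. Lemma~\ref{eq:e1e21} already furnishes closed-form series for the two pieces that constitute the left-hand side of the generalized impedance condition: the curl trace $\nu_j\wedge(\nabla\wedge\mathbf{E}|_{\widetilde\Pi_j})$ is recorded in \eqref{gg}, and the tangential trace $\nu_j\wedge\mathbf{E}|_{\widetilde\Pi_j}$ is recorded in \eqref{mix pi2}. Consequently the only genuinely new object is $(\nu_j\wedge\mathbf{E}|_{\widetilde\Pi_j})\wedge\nu_j$; once this is written in the $\{\boldsymbol{e}_1,\boldsymbol{e}_2\}$ frame, the target identities \eqref{ss1} and \eqref{ss2} follow by adding $\boldsymbol{\eta}_j$ times that expression to \eqref{gg} and collecting the $\boldsymbol{e}_1$- and $\boldsymbol{e}_2$-components.

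First I would record the action of $\wedge\,\nu_j$ on the frame vectors. Using the explicit normals \eqref{l1} together with the definitions \eqref{eq:e1e2}, a direct cross-product computation shows that on each face the triple $\{\boldsymbol{e}_1(\theta,\phi_j),\boldsymbol{e}_2(\theta,\phi_j),\nu_j\}$ is orthonormal, so that $\boldsymbol{e}_1(\theta,\phi_j)\wedge\nu_j$ and $\boldsymbol{e}_2(\theta,\phi_j)\wedge\nu_j$ are again (signed) multiples of $\boldsymbol{e}_2(\theta,\phi_j)$ and $\boldsymbol{e}_1(\theta,\phi_j)$, respectively. Equivalently, one may note that $(\nu_j\wedge\mathbf{E})\wedge\nu_j$ is just the tangential projection $\mathbf{E}-(\mathbf{E}\cdot\nu_j)\nu_j$, which stays in $\mathrm{span}\{\boldsymbol{e}_1,\boldsymbol{e}_2\}$ since both frame vectors are tangent to $\Pi_j$. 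The one point that must be tracked with care is that the two faces carry opposite orientations relative to this frame, geometrically because $\nu_1=-\hat{\boldsymbol\phi}|_{\phi=0}$ whereas $\nu_2=+\hat{\boldsymbol\phi}|_{\phi=\phi_0}$; hence the sign pair obtained on $\widetilde\Pi_1$ is the negative of the one obtained on $\widetilde\Pi_2$. Essentially all of the bookkeeping lives in this sign.

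Next I would substitute \eqref{mix pi2} into $(\nu_j\wedge\mathbf{E})\wedge\nu_j$, replacing each $\boldsymbol{e}_i\wedge\nu_j$ by the value just computed; this swaps the roles of the $\boldsymbol{e}_1$- and $\boldsymbol{e}_2$-coefficients of \eqref{mix pi2} up to sign. Multiplying by $\boldsymbol{\eta}_j$ and adding the outcome to \eqref{gg} term by term then produces the stated coefficients: the $\boldsymbol{e}_1$-component gathers the curl contribution $\mathbf{i}k\,a_l^m l(l+1)p_l(kr)Y_l^m$ with the two impedance terms carrying $j_l$ and $q_l$, while the $\boldsymbol{e}_2$-component gathers the two curl terms in $j_l$ and $q_l$ with the impedance term carrying $p_l$. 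Since $\boldsymbol{\eta}_j$ belongs to the class $\mathcal{A}(\bsl)$ and the series \eqref{mix pi21} for $\mathbf{E}$—hence for each of its traces—converges absolutely and uniformly in a neighbourhood of $\mathbf{0}$ by Lemma~\ref{k2}, the multiplication by $\boldsymbol{\eta}_j$ and the regrouping are legitimate termwise and preserve convergence.

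The main obstacle is not any single step but the consistent propagation of the orientation-dependent signs through the cross products, precisely the computation described as ``straightforward though a bit tedious'' in the proof of Lemma~\ref{eq:e1e21}; this is also what produces the relative signs distinguishing the identity on $\widetilde\Pi_1$ from the one on $\widetilde\Pi_2$, and where I would expend the most care.
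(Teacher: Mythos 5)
Your plan for \eqref{ss2} is sound and is essentially the paper's own computation in different packaging: the paper substitutes \eqref{eq:nu2 r} and \eqref{eq:22 cross} into \eqref{eq:23}, working from the expansion \eqref{mix pi21} in the spherical frame, while you start one step later from \eqref{mix pi2} and \eqref{gg} and wedge with $\nu_j$ using the orthonormality of $\{\boldsymbol{e}_1,\boldsymbol{e}_2,\nu_j\}$. For $\widetilde\Pi_2$ the two routes coincide (indeed $\boldsymbol{e}_1(\theta,\phi_0)\wedge\nu_2=-\boldsymbol{e}_2(\theta,\phi_0)$, $\boldsymbol{e}_2(\theta,\phi_0)\wedge\nu_2=\boldsymbol{e}_1(\theta,\phi_0)$), and your recipe does land on \eqref{ss2}.

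The genuine gap is on $\widetilde\Pi_1$, exactly at the step you postpone. You correctly observe that the orientation is opposite on the two faces ($\nu_1=-\hat{\boldsymbol\phi}|_{\phi=0}$ while $\nu_2=+\hat{\boldsymbol\phi}|_{\phi=\phi_0}$) and correctly infer that this must produce ``relative signs distinguishing the identity on $\widetilde\Pi_1$ from the one on $\widetilde\Pi_2$''; but the statement you are proving contains no such distinguishing signs, since \eqref{ss1} and \eqref{ss2} are formally identical. These two assertions cannot both hold, and your proposal never resolves the conflict: it simply declares that the bookkeeping ``produces the stated coefficients''. It does not. Running your recipe with the quoted inputs, \eqref{mix pi2} gives $\nu_1\wedge\mathbf{E}|_{\widetilde\Pi_1}=\sum_{l,m}-\tfrac{1}{\sqrt{l(l+1)}}\,(A\,\boldsymbol{e}_1+B\,\boldsymbol{e}_2)$ with $A=b_l^m l(l+1)p_l Y_l^m$ and $B=a_l^m j_l\tfrac{m}{\sin\theta}Y_l^m+b_l^m q_l\tfrac{\partial Y_l^m}{\partial\theta}$; since $\boldsymbol{e}_1(\theta,0)\wedge\nu_1=\boldsymbol{e}_2(\theta,0)$ and $\boldsymbol{e}_2(\theta,0)\wedge\nu_1=-\boldsymbol{e}_1(\theta,0)$, one gets $\boldsymbol{\eta}_1(\nu_1\wedge\mathbf{E})\wedge\nu_1=\sum_{l,m}\tfrac{1}{\sqrt{l(l+1)}}\,(\boldsymbol{\eta}_1 B\,\boldsymbol{e}_1-\boldsymbol{\eta}_1 A\,\boldsymbol{e}_2)$, and adding \eqref{gg} produces the $\boldsymbol{\eta}_1$-terms with signs \emph{opposite} to those in \eqref{ss1} in both components. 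The structural reason no amount of care fixes this is a parity obstruction: $(\nu\wedge\mathbf{E})\wedge\nu=\mathbf{E}-(\nu\cdot\mathbf{E})\nu$ is even in $\nu$, whereas $\nu\wedge(\nabla\wedge\mathbf{E})$ is odd, so under the orientation flip between the faces it is the $\mathbf{i}k$-terms, not the $\boldsymbol{\eta}$-terms, that must change sign; a from-scratch computation (note $\nu_1\wedge\hat{\boldsymbol r}|_{\phi=0}=-\boldsymbol{e}_1(\theta,0)$ and $\nu_1\wedge\hat{\boldsymbol\theta}|_{\phi=0}=-\boldsymbol{e}_2(\theta,0)$, which is not what the first identities of \eqref{mix pi2} and \eqref{gg} encode) yields \eqref{ss1} with the $\mathbf{i}k$-terms negated, again not \eqref{ss1} as printed. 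In short, a faithful execution of your plan does not prove \eqref{ss1}; it surfaces a sign inconsistency among \eqref{mix pi2}, \eqref{gg} and \eqref{ss1} that the paper's own proof leaves hidden by computing only the $\widetilde\Pi_2$ case and declaring $\widetilde\Pi_1$ ``similar''. To make your argument complete you would have to redo the $\widetilde\Pi_1$ traces from \eqref{mix pi21} with the correct face-one cross products and then state the face-one identity with the resulting (necessarily different) signs.
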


      \begin{proof} Recall that $\nu_2$ is defined in \eqref{l1},  $\hat{\boldsymbol{r}}, \hat{\boldsymbol{\theta}}$ and $\hat{\boldsymbol{\phi}}$ are given by \eqref{w1}. Then it is easy to see that
     \begin{equation}
  \label{eq:22 cross}
     \begin{split}
     (\nu_2\wedge\hat{\boldsymbol{r}})\wedge\nu_2&=(\cos\phi_0\sin\theta,\sin\phi_0\sin\theta,\cos\theta)^\top ,\\
      (\nu_2\wedge \hat{\boldsymbol{\theta}})\wedge\nu_2&=(\cos\phi_0\cos\theta,\sin\phi_0\cos\theta,-\sin\theta)^\top,\quad (\nu_2\wedge\hat{\boldsymbol{\phi}})\wedge\nu_2 ={\mathbf 0}.
     \end{split}
     \end{equation}
Using \eqref{mix pi21} and \eqref{gg}, we can derive that
\begin{align}
&\nu_2 \wedge (\nabla\wedge\mathbf{E}|_{\widetilde{ \Pi}_2})+\boldsymbol{ \eta} _2(\nu_2\wedge\mathbf{E}|_{\widetilde{ \Pi}_2})\wedge\nu_2\notag\\
     =& \sum_{l=1}^{\infty}\sum_{m=-l}^{l}\frac{1}{\sqrt{l(l+1)}}\Bigg\{\mathbf{i}k\Bigg(a_l^ml(l+1)p_l(kr)Y_l^m
\cdot\nu_2 \wedge \hat{\boldsymbol{r}}\notag \\
    &+\bigg( -b_l^mj_l(kr)\cdot\frac{m}{\sin\theta}Y_l^m+a_l^m
    q_l(kr)\frac{\partial{Y_l^m}}{\partial\theta}\bigg)
    \cdot\nu_2 \wedge \hat{\boldsymbol\theta}\notag \\
    &+\bigg(-b_l^mj_l(kr)\mathbf{i}\frac{\partial{Y_l^m}}{\partial\theta}+a_l^m
 q_l(kr)\frac{\mathbf{i}m}{\sin\theta}Y_l^m\bigg)\nu_2 \wedge \hat{\boldsymbol\phi}\Bigg)\notag \\
&-\boldsymbol{ \eta} _2\Bigg(\bigg(b_l^m\cdot{l(l+1)}p_l(kr)\cdot Y_l^m\bigg)\cdot(\nu_2\wedge\hat{\mathbf{r}})\wedge\nu_2\notag\\
&+ \bigg(a_l^m j_l\big(kr\big)\frac{m}{\sin\theta}Y_l^m+b_l^m
    q_l(kr) \frac{\partial{Y_l^m}}{\partial\theta}\bigg) (\nu_2\wedge \hat{\boldsymbol{\theta}})\wedge\nu_2 \notag \\
       &+\mathbf{i} \bigg(a_l^m j_l(kr)\frac{\partial{Y_l^m}}{\partial\theta}+\frac{ m b_l^m
     q_l(kr)}{\sin\theta}Y_l^m
     \bigg) (\nu_2\wedge\hat{\boldsymbol{\phi}})\wedge\nu_2\Bigg)\Bigg\}.    \label{eq:23}
\end{align} 
Substituting \eqref{eq:nu2 r} and  \eqref{eq:22 cross} into \eqref{eq:23}, together with straightforward calculations, we can obtain \eqref{ss2}. \eqref{ss1} can be derived in a similar manner.
      \end{proof}

  \section{Vanishing orders for an edge-corner ${\mathcal E} ( \widetilde{ \Pi}_1,  \widetilde{ \Pi}_2,\bsl)$ with $\boldsymbol{\eta}_j\in \mathcal{A}(\bsl)$}\label{sec:5}
  
  In this section, we consider the case that ${\mathcal E} ( \widetilde{ \Pi}_1,  \widetilde{ \Pi}_2,\bsl)$ and edge-corner with both $\boldsymbol{\eta}_1$ and $\boldsymbol{\eta}_2$ belong to the class $\mathcal{A}(\bsl)$. We shall derive the vanishing order of $\mathbf{E}$ to \eqref{eq:eig} at the origin $\mathbf{0}\in\bsl$. The major idea is to make use of the radial wave expansion \eqref{mix pi21} of $\mathbf{E}$ in $B_{\rho_0 }( {\mathbf 0}) $, and to investigate the relationships between $a_n^{\pm 1}$, $a_n^0$ and $b_n^{\pm 1}$, $b_n^0$. Henceforth, according to Definition~\ref{def:class1}, we assume that $\boldsymbol{ \eta}_j$, $j=1,2,$ are given by the following absolutely convergent series at $\mathbf 0\in \bsl$:
\begin{subequations}
	\begin{align}
		\boldsymbol{\eta}_1&=\eta_{1}+\sum_{j=1}^\infty \eta_{1,j}(\theta) r^j, \label{eq:eta1 ex} \\
		\boldsymbol{\eta}_2&=\eta_{2}+\sum_{j=1}^\infty \eta_{2,j}(\theta) r^j \label{eq:eta2 ex}
	\end{align}
\end{subequations}
    where $\eta_{\ell}\in\mathbb{C}\backslash\{0\}$, $\eta_{\ell,j}(\theta)\in C[-\pi, \pi]$ and $r\in [-h,h]$, $\ell=1,2$.  Next, based on the above setting, we derive several critical lemmas.

\begin{lemma}\label{lem:imp pi12}
	Let $\mathbf{E}$ be a a solution to \eqref{eq:eig}, whose radial wave expansion in $B_{\rho_0}(\mathbf{0}) $ is given by \eqref{mix pi21}.  Consider a generalized impedance  edge-corner ${\mathcal E}(\widetilde{ \Pi}_1, \widetilde{ \Pi}_2,\bsl) \Subset \Omega$ with $\angle(\Pi_{1},\Pi_2)=\phi_0=\alpha \pi$, where $\alpha\in(0,2)$ and $\alpha \neq 1$. Suppose that the generalized impedance parameters $ \boldsymbol{ \eta}_j$ on $\widetilde{ \Pi}_j $, $j=1, 2$, are given by \eqref{eq:eta1 ex} and \eqref{eq:eta2 ex} respectively. It holds that
	\begin{subequations}
	 \begin{align}
   0&=\frac{4\mathbf{i}kc_1^1\sin^2\phi_0}{6\sqrt{2}}(a_1^1+a_1^{-1})-\frac{4k c_1^1\sin\phi_0\cos\phi_0}{6\sqrt{2}}(a_1^1-a_1^{-1})-\frac{(\eta_{2}\cos\phi_0+\eta_1)\sqrt{2}c_1^0}{3} b_1^0 ,\label{eq:lem51 a1}  \\
0&=-\frac{4\mathbf{i}k c_1^1\sin\phi_0\cos\phi_0}{6\sqrt{2}}
(a_1^1+a_1^{-1})-\frac{4k c_1^1\sin^2\phi_0}{6\sqrt{2}}(a_1^1-a_1^{-1})-\frac{\eta_{2}\sqrt{2} c_1^0\sin\phi_0}{3}b_1^0,\label{eq:lem51 a2}\\
0&=-\frac{4c_1^1(-\eta_{1}+\eta_{2}\cos\phi_0)}{6\sqrt{2}}(b_1^1+b_1^{-1}) +\frac{4\eta_{2} c_1^1\sin\phi_0\mathbf{i}}{6\sqrt{2}}(b_1^1-b_1^{-1}). \label{eq:beta 3rd}
\end{align}
	\end{subequations}
	Assume that there exists   $n\in \mathbb N \backslash\{1\}$ such that
\begin{equation}\label{eq:lem41 cond}
a_l^0=b_l^0=a_l^{\pm 1}=b_l^{\pm 1}=0, \quad l =1,\ldots, n-1.
\end{equation}
Then we have
	\begin{subequations} 
\begin{align}
	 &\frac{\eta_{1}\sqrt{n(n+1)}c_n^0}{2n+1}b_n^0=\frac{\mathbf{i}kn(n+1)^2c_n^1\sin^2\phi_0}{2(2n+1)\sqrt{n(n+1)}}(a_n^1+a_n^{-1}) \label{eq:52a}
  \\
 &\quad -\frac{kn(n+1)^2c_n^1\sin\phi_0\cos\phi_0}{2(2n+1)\sqrt{n(n+1)}}(a_n^1-a_n^{-1})-\frac{\eta_{2}\sqrt{n(n+1)}c_n^0\cos\phi_0}{2n+1} b_n^0 ,  \notag \\
&\frac{kn(n+1)^2c_n^1}{2(2n+1)\sqrt{n(n+1)}}(a_n^1-a_n^{-1})=-\frac{\mathbf{i}kn(n+1)^2c_n^1\sin\phi_0\cos\phi_0}{2(2n+1)\sqrt{n(n+1)}}
(a_n^1+a_n^{-1})  \label{eq:52b} \\
&\quad +\frac{kn(n+1)^2c_n^1\cos^2\phi_0}{2(2n+1)\sqrt{n(n+1)}}(a_n^1-a_n^{-1})-\frac{\eta_2\sqrt{n(n+1)} c_n^0\sin\phi_0}{2n+1}b_n^0,  \notag \\
&-\frac{\eta_{1} n(n+1)^2c_n^1}{2(2n+1)\sqrt{n(n+1)}}(b_n^1+b_n^{-1})=\frac{\eta_2n(n+1)^2c_n^1\cos\phi_0}{2(2n+1)\sqrt{n(n+1)}}(b_n^1+b_n^{-1})\label{eq:52c} \\
&\quad -\frac{n(n+1)^2\eta_2\sin\phi_0\mathbf{i}}{2(2n+1)\sqrt{n(n+1)}}(b_n^1-b_n^{-1}). \notag
	\end{align}
\end{subequations}
\end{lemma}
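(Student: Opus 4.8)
The plan is to turn the two generalized impedance conditions \eqref{eq:imp2} into algebraic relations among the Fourier coefficients $a_n^m,b_n^m$ by exploiting the explicit expansions \eqref{ss1} and \eqref{ss2}. Since \eqref{eq:imp2} forces the left-hand sides of \eqref{ss1} and \eqref{ss2} to vanish identically on $\widetilde\Pi_1$ and $\widetilde\Pi_2$, and since $\boldsymbol e_1(\theta,\phi_j)$ and $\boldsymbol e_2(\theta,\phi_j)$ are linearly independent (Lemma~\ref{eq:e1e21}), each vector identity splits into two scalar identities in $(r,\theta)$, namely the vanishing of the $\boldsymbol e_1$- and of the $\boldsymbol e_2$-coefficient. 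This produces four scalar identities, two on each face. A glance at \eqref{ss1}--\eqref{ss2} shows that the $\boldsymbol e_1$-coefficients couple $a_n^m$ (through $p_lY_l^m$) with $b_n^m$ (through $\boldsymbol\eta_j q_l\,\partial_\theta Y_l^m$), whereas the $\boldsymbol e_2$-coefficients couple $a_n^m$ (through $q_l\,\partial_\theta Y_l^m$) with $b_n^m$ (through $\boldsymbol\eta_j p_lY_l^m$); this separation is what ultimately yields two decoupled families of relations.

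Next I would expand in powers of $r$. By \eqref{eq:bess sph} and Remark~\ref{i2}, $p_l(kr),q_l(kr)=\Oh(r^{l-1})$ while $j_l(kr)=\Oh(r^{l})$, and by \eqref{eq:eta1 ex}--\eqref{eq:eta2 ex} one has $\boldsymbol\eta_j=\eta_j+\Oh(r)$. Consequently, at the lowest order carrying the index $l=n$, namely $r^{\,n-1}$, the terms containing $j_l\frac{m}{\sin\theta}Y_l^m$ drop out (they are $\Oh(r^{n})$) and only the constant term $\eta_j$ of the impedance survives. Matching the coefficient of the governing power of $r$ --- legitimate by the absolute and uniform convergence of \eqref{mix pi21} (Lemma~\ref{lem:kra}) together with the linear independence of the spherical Bessel functions (Lemma~\ref{lem:coeff0}) --- isolates a single radial index: for the base case the lowest order $r^0$ selects $l=1$, and under the inductive hypothesis \eqref{eq:lem41 cond} the order $r^{\,n-1}$ selects $l=n$.

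The third ingredient isolates the modes $m\in\{0,\pm1\}$, and here the key is to evaluate the four $r$-governing identities at $\theta=0$. By \eqref{eq:plm0} the value $Y_l^m|_{\theta=0}$ is nonzero only for $m=0$, while the recursions \eqref{uu} show that both $\partial_\theta Y_l^m$ and $\frac{m}{\sin\theta}Y_l^m$ are nonzero at $\theta=0$ only for $|m|=1$. This has two consequences. First, although several indices $l\le n$ formally contribute to the power $r^{\,n-1}$, every term with $|m|\ge2$ is annihilated at $\theta=0$, and the surviving $m\in\{0,\pm1\}$ contributions with $l<n$ vanish by \eqref{eq:lem41 cond}, so only $l=n$ remains --- which is precisely why the $\theta=0$ evaluation must be combined with the order matching, and why no higher $\theta$-derivative may be used (those would reintroduce the uncontrolled $|m|\ge2$ coefficients at $l=n$). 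Second, the $\boldsymbol e_1$-identities collapse to relations among $a_n^0$ and $b_n^{\pm1}$, and the $\boldsymbol e_2$-identities to relations among $a_n^{\pm1}$ and $b_n^0$. On $\widetilde\Pi_2$ the values $Y_n^m|_{\phi=\phi_0}$ carry the phase $e^{\mathbf i m\phi_0}$, so the recombination $a_n^{1}e^{\mathbf i\phi_0}+a_n^{-1}e^{-\mathbf i\phi_0}=\cos\phi_0\,(a_n^1+a_n^{-1})+\mathbf i\sin\phi_0\,(a_n^1-a_n^{-1})$ is exactly what introduces the weights $\cos\phi_0,\sin\phi_0$.

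Finally I would assemble the system. Eliminating $a_n^0$ between the two $\boldsymbol e_1$-identities (one per face) produces a single relation purely in $b_n^{\pm1}$, which after inserting the Legendre values $P_n^0(1)=1$ and $\partial_\theta P_n^1|_{\theta=0}=\tfrac12 n(n+1)$ from \eqref{uu} is exactly \eqref{eq:beta 3rd} for $n=1$ and \eqref{eq:52c} in general. Likewise the two $\boldsymbol e_2$-identities, rewritten with $\sin^2\phi_0+\cos^2\phi_0=1$ to trade $\cos^2\phi_0$ for $1-\sin^2\phi_0$, give \eqref{eq:lem51 a1}--\eqref{eq:lem51 a2} for $n=1$ and \eqref{eq:52a}--\eqref{eq:52b} in general, the explicit dependence on $b_n^0$ being retained since these two equations involve the three unknowns $a_n^{\pm1},b_n^0$. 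I expect the main obstacle to lie in this last stage together with the reduction of the cross products: the vectors $\nu_j\wedge\hat{\boldsymbol r},\nu_j\wedge\hat{\boldsymbol\theta},\nu_j\wedge\hat{\boldsymbol\phi}$ and their reflections $(\nu_j\wedge\cdot)\wedge\nu_j$ must be resolved along $\boldsymbol e_1,\boldsymbol e_2$ with the correct signs, and one must check that no term of the wrong $r$-order or wrong azimuthal index leaks into the quantity being extracted --- the careful and slightly delicate bookkeeping underlying the precise coefficients in \eqref{eq:lem51 a1}--\eqref{eq:52c}.
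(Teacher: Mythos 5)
Your proposal is correct in substance and would deliver the lemma, but it organizes the argument differently from the paper. The paper's proof never works with the $\boldsymbol e_1/\boldsymbol e_2$ decomposition of the two face conditions at this stage; instead it applies $\nu_j\wedge$ to each impedance condition to solve for the full curl on each face (\eqref{eq:41}--\eqref{eq:45}), equates the two resulting representations of $\nabla\wedge\mathbf{E}$ on the common edge to obtain the single vector identity \eqref{y1}, expands everything on $\bsl$ (where, exactly as you argue, \eqref{eq:plm0} and \eqref{uu} leave only the $m\in\{0,\pm1\}$ modes), and reads off \eqref{eq:52a}, \eqref{eq:52b}, \eqref{eq:52c} as the three Cartesian components of \eqref{y1} at order $r^{n-1}$. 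Your route --- keep the two conditions separate, extract the $\boldsymbol e_1$- and $\boldsymbol e_2$-components on each face, evaluate at $\theta=0$, then eliminate --- produces exactly the four scalar equations that appear in the paper as the first equations of \eqref{c2}, \eqref{m2}, \eqref{c22}, \eqref{m22}, and the paper's three equations are precisely linear combinations of these: the third component of \eqref{y1} is the difference of the two $\boldsymbol e_1$-equations (your elimination of $a_n^0$, giving \eqref{eq:beta 3rd}/\eqref{eq:52c}), while the first and second components are the combinations $(\mathrm{Eq}_{\Pi_1,\boldsymbol e_2})\pm\cos\phi_0\,(\mathrm{Eq}_{\Pi_2,\boldsymbol e_2})$ and $\sin\phi_0\,(\mathrm{Eq}_{\Pi_2,\boldsymbol e_2})$. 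So your system carries the same information (indeed it is the marginally finer one, four equations against three), and your method is in fact the one the paper itself uses later for Lemmas~\ref{lem:54} and \ref{lem:55}, there with $\theta$-orthogonality \eqref{ortho3} in place of evaluation at $\theta=0$. What the paper's packaging buys is that the coupling of $\eta_1$ and $\eta_2$ inside single equations appears automatically, with no recombination step to justify; what your packaging buys is that it avoids the vector algebra behind \eqref{y1} entirely. Two further remarks to your credit and caution. First, you correctly isolate the genuine subtlety of this lemma: hypothesis \eqref{eq:lem41 cond} does \emph{not} annihilate the $|m|\ge 2$ modes with $l<n$, so power matching alone cannot isolate $l=n$; it is the evaluation at $\theta=0$ that kills those modes --- a point the paper glosses over when it writes the sums in \eqref{eq:nu2 curl E} as starting at $l=n$. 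Second, your closing worry about sign bookkeeping is well placed: the printed coefficients of the lemma are not even internally consistent (compare the $\eta_1$-terms of \eqref{eq:beta 3rd} and \eqref{eq:52c}, which should be the $n=1$ and general-$n$ forms of the same relation), so when you carry out the final recombination you should expect agreement with the stated equations only up to such sign typos, not literally.
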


\begin{proof} We shall first derive \eqref{eq:52a}, \eqref{eq:52b} and \eqref{eq:52c}. \eqref{eq:lem51 a1}, \eqref{eq:lem51 a2} and \eqref{eq:beta 3rd} can be obtained in a similar way and we shall sketch the corresponding derivations at the end of the proof.

We first note that
\begin{equation}\label{eq:41}
	(\nu_2\wedge\mathbf{E})\wedge\nu_2=-\nu_2\wedge(\nu_2\wedge\mathbf{E})=-\big(\nu_2\cdot(\nu_2\cdot\mathbf{E})-\mathbf{E}(\nu_2\cdot\nu_2)\big)=\mathbf{E}-(\nu_2\cdot\mathbf{E})\cdot\nu_2.
\end{equation}
Hence, we have from 
\begin{equation}
	\nu_2 \wedge (\nabla\wedge\mathbf{E}|_{\widetilde \Pi_2})+\boldsymbol{ \eta}_2(\nu_2 \wedge\mathbf{E}|_{\widetilde \Pi_2})\wedge\nu_2=\mathbf 0,
\end{equation}
that
\begin{equation}\label{eq:43}
	\nu_2\wedge(\nabla\wedge \mathbf{E})|_{\widetilde \Pi_2} +\boldsymbol \eta_2\big(\mathbf{E}|_{\widetilde \Pi_2}-(\nu_2\cdot\mathbf{E}|_{\widetilde \Pi_2})\cdot\nu_2\big)=\mathbf 0.
\end{equation}
Multiplying  the cross product with $\nu_2$ from left on both sides \eqref{eq:43}, by using the fact that
$$\nu_2\wedge\big(\nu_2\wedge(\nabla\wedge \mathbf{E})|_{\widetilde \Pi_2}\big)=\nu_2\cdot\big(\nu_2\cdot(\nabla\wedge \mathbf{E})|_{\widetilde \Pi_2}\big)-(\nu_2\cdot\nu_2)(\nabla\wedge \mathbf{E}) |_{\widetilde \Pi_2},
$$
we can obtain that
\begin{equation}\label{eq:44}
	\big(\nu_2\cdot(\nabla\wedge \mathbf{E})|_{ \widetilde \Pi_2}\big)\nu_2+\boldsymbol \eta_2(\nu_2\wedge\mathbf{E}|_{\widetilde \Pi_2})=\nabla\wedge \mathbf{E} |_{\widetilde \Pi_2}.
\end{equation}
Similarly, since the generalized impedance condition \eqref{eq:imp2} associated with $\boldsymbol{ \eta}_1$  is imposed on $\widetilde {\Pi}_1$, using the above argument, we can deduce that
\begin{equation}\label{eq:45}
	\big(\nu_1\cdot(\nabla\wedge \mathbf{E})|_{\widetilde \Pi_1}\big)\nu_1+\boldsymbol \eta_1(\nu_1\wedge\mathbf{E}|_{\widetilde \Pi_1})=\nabla\wedge \mathbf{E} |_{\widetilde \Pi_1}.
\end{equation}
Since  $\bsl \in \widetilde \Pi_1 \cap  \widetilde \Pi_2$, combing \eqref{eq:44} with \eqref{eq:45}, it yields that
\begin{equation}\label{y1}
 \big(\nu_1\cdot(\nabla\wedge \mathbf{E}|_{\bsl})\big)\nu_1+\boldsymbol \eta_1(\nu_1\wedge\mathbf{E}|_{\bsl})=\big(\nu_2\cdot(\nabla\wedge \mathbf{E}|_{\bsl})\big)\nu_2+\boldsymbol \eta_2(\nu_2 \wedge\mathbf{E}|_{\bsl}).
 \end{equation}



Due to \eqref{eq:lem41 cond},  using \eqref{mix pi2} and \eqref{eq:curl E}, by virtue of  \eqref{uu},   it yields that
\begin{align}
\label{eq:nu2 curl E}
\nabla\wedge\mathbf{E}|_{\widetilde \Pi_2}=&\sum_{l=n}^{\infty}\sum_{m=-l }^{l}\frac{\mathbf{i}k}{\sqrt{l(l+1)}}\Bigg\{a_l^m\cdot{l(l+1)}p_l(kr)c_l^mP_l^{|m|}\cdot\hat{\boldsymbol{r}}|_{\widetilde \Pi_2 }\notag  \\
 &+\bigg(b_l^mj_l(kr)c_l^m\frac{\sgn(m)}{2}\big[P_{l-1}^{|m|+1}(\cos\theta)+(l+|m|-1)(l+|m|)P_{l-1}^{|m|-1}(\cos\theta)\big]\notag \\
 &+a_l^mq_l(kr)c_l^m\frac{1}{2}\big[(l+|m|)(l-|m|+1)P_l^{|m|-1}(\cos\theta)-P_l^{|m|+1}(\cos\theta)\big]\bigg)\hat{\boldsymbol\theta}|_{\widetilde \Pi_2 } \notag \\
 &+\bigg(-b_l^mj_l(kr)\mathbf{i}c_l^m\frac{1}{2}\big[(l+|m|)(l-|m|+1)P_l^{|m|-1}(\cos\theta)-P_l^{|m|+1}(\cos\theta)\big] \notag \\
 &-a_l^m\cdot
 q_l(kr)\mathbf{i} c_l^m\frac{\sgn(m)}{2}\big[P_{l-1}^{|m|+1}(\cos\theta)\notag \\
 &+(l+|m|-1)(l+|m|)P_{l-1}^{|m|-1}(\cos\theta)\big]\bigg)\hat{\boldsymbol\phi}|_{\Pi_2 }\Bigg\},\notag \\
 \end{align}
 and
 \begin{align}\label{eq:312 nu2}
	\nu_2\wedge\mathbf{E}|_{\widetilde \Pi_2}&=\sum_{l=n}^{\infty}\sum_{m=-l }^{l}\Bigg\{\bigg\{-b_l^m\cdot{\sqrt{l(l+1)}}p_l(kr)\cdot c_l^mP_l^{|m|}\bigg\}\cdot\nu_2 \wedge\hat{\boldsymbol{r}}|_{\widetilde \Pi_2 }\notag \\
 &+ \bigg\{a_l^m\cdot j_l\big(kr\big)\frac{1}{\sqrt{l(l+1)}} c_l^m\frac{\sgn(m)}{2} \big[P_{l-1}^{|m|+1}(\cos\theta)+(l+|m|-1)(l+|m|)\notag \\
 &\times P_{l-1}^{|m|-1}(\cos\theta)\big]-b_l^m\cdot
    q_l(kr)\cdot\frac{1}{\sqrt{l(l+1)}}c_l^m\frac{1}{2}\big[(l+|m|)(l-|m|+1)\notag \\
    &\times P_l^{|m|-1}(\cos\theta)-P_l^{|m|+1}(\cos\theta)\big]\bigg\}\cdot\nu_2 \wedge \hat{\boldsymbol\theta}|_{\widetilde \Pi_2 } +\bigg\{-a_l^m\cdot j_l(kr)\frac{\mathbf{i}}{\sqrt{l(l+1)}}\frac{c_l^m }{2}\notag \\
    &\times \big[(l+|m|)(l-|m|+1)  P_l^{|m|-1}(\cos\theta)-P_l^{|m|+1}(\cos\theta)\big]+b_l^m 
     q_l(kr)\frac{\mathbf{i}}{\sqrt{l(l+1)}}\notag \\
     &\times   c_l^m\frac{\sgn(m)}{2}\big[P_{l-1}^{|m|+1}(\cos\theta)+(l+|m|-1)(l+|m|)P_{l-1}^{|m|-1}(\cos\theta)\big]
     \bigg\}\times  \nu_2 \wedge\hat{\boldsymbol\phi}|_{\widetilde \Pi_2 } \Bigg\},
\end{align}
where
\begin{equation}\notag
	\sgn(m)=1\ \mbox{when}\ m>0;\ 0\ \mbox{when}\ m=0; \ -1\ \mbox{when}\ m<0. 
\end{equation}
Recall that if $\bfx \in \bsl $ one has
 \begin{equation}\label{eq:l}
 \theta=\phi=0,	\quad 0\leq r\leq h,
 \end{equation}
where  $r$, $\theta$ and $\phi$ are the spherical coordinates  of $\bfx \in \bsl$ defined in \eqref{eq:x sph}.   It is straightforward to calculate that
 \begin{equation}\label{eq:48}
\begin{split}
 \nu_2\wedge\boldsymbol{\hat{r}}|_{\theta=\phi=0}&=\begin{bmatrix} \cos\phi_0 \\ \sin\phi_0 \\ 0\end{bmatrix},\,  \nu_2\wedge\boldsymbol{\hat{\theta}}|_{\theta=\phi=0}=-\begin{bmatrix} 0 \\ 0 \\ \cos\phi_0\end{bmatrix},\,  \nu_2\wedge\boldsymbol{\hat{\phi}}|_{\theta=\phi=0}=-\begin{bmatrix} 0 \\0 \\\sin\phi_0 \end{bmatrix},  \\
\nu_2\cdot\boldsymbol{\hat{r}}|_{\theta=\phi=0}&=0,\quad \nu_2\cdot\boldsymbol{\hat{\theta}}|_{\theta=\phi=0}=-\sin\phi_0,\quad \nu_2\cdot\boldsymbol{\hat{\phi}}|_{\theta=\phi=0}=\cos\phi_0,
\end{split}
\end{equation}
 where $\boldsymbol{\hat{r}}$, $\boldsymbol{\hat{\theta }}$ and $\boldsymbol{\hat{\phi }}$ are defined in \eqref{eq:x sph}.


Evaluating \eqref{eq:nu2 curl E} and \eqref{eq:312 nu2} at $\bsl$, by virtue of \eqref{eq:plm0} and \eqref{eq:48},  we can derive that	
\begin{equation}
 \label{eq:nu2 curl E416}
\begin{split}
\nabla\wedge\mathbf{E}|_{\bsl}&=\sum_{l=n}^{+\infty}\frac{\mathbf{i}k}{\sqrt{l(l+1)}}\Bigg\{ a_l^0l(l+1)p_l(kr)c_l^0\cdot\hat{\boldsymbol{r}}|_{\theta=\phi=0}   \\
&+\frac{(l+1)l}{2}\bigg((b_l^1-b_l^{-1})\cdot j_l(kr)\cdot c_l^1+(a_l^1+a_l^{-1})q_l(kr)c_l^1
\bigg)\cdot\boldsymbol{\hat{\theta}}|_{\theta=\phi=0} \\
&+\mathbf{i}\frac{(l+1)l}{2}\bigg(-(b_l^1+b_l^{-1})\cdot j_l(kr)c_l^1-(a_l^1-a_l^{-1})q_l(kr)c_l^1\bigg)\cdot\boldsymbol{\hat{\phi}} |_{\theta=\phi=0} \Bigg\},\\
	\nu_2\wedge\mathbf{E}|_{\bsl }&=\sum_{l=n}^{+\infty}\frac{1}{\sqrt{l(l+1)}}\Bigg\{\Bigg(-b_l^0l(l+1)p_l(kr)c_l^0\cdot\nu_2\wedge\boldsymbol{\hat{r}}|_{\theta=\phi=0}\\
&-\frac{(l+1)l}{2}\bigg(-(a_l^1-a_l^{-1}) j_l(kr)c_l^1+(b_l^1+b_l^{-1})q_l(kr)c_l^1\bigg)\cdot\nu_2\wedge\boldsymbol{\hat{\theta}}|_{\theta=\phi=0}\\
&-\mathbf{i}\frac{(l+1)l}{2}\bigg((a_l^1+a_l^{-1})j_l(kr)c_l^1-(b_l^1-b_l^{-1})q_l(kr)c_l^1\bigg)
\cdot\nu_2\cdot\boldsymbol{\hat{ \phi  }}|_{\theta=\phi=0}\Bigg)\Bigg\}.
\end{split}
\end{equation}	
Therefore, from \eqref{eq:nu2 curl E416} we obtain that
\begin{align}
&\nu_2^\top (\nabla\wedge\mathbf{E}|_{\bsl})
\nu_2+\boldsymbol{ \eta}_2(\nu_2\wedge\mathbf{E}|_{\bsl})\notag\\
=&\sum_{l=n}^{+\infty}\frac{1}{\sqrt{l(l+1)}}\Bigg\{\mathbf{i}k\Bigg[ -\sin\phi_0\frac{ c_l^1(l+1)l}{2}\bigg((b_l^1-b_l^{-1}) j_l(kr) +(a_l^1+a_l^{-1})q_l(kr)c_l^1
\bigg)\notag\\
&-\cos\phi_0\frac{\mathbf{i}(l+1)l}{2} \times \bigg((b_l^1+b_l^{-1}) j_l(kr)c_l^1 +(a_l^1-a_l^{-1})q_l(kr)c_l^1\bigg)\Bigg] \notag \\
&\quad\times \begin{bmatrix} -\sin\phi_0\\ \cos\phi_0 \\ 0\end{bmatrix} +\boldsymbol{ \eta}_2\bigg[ -b_l^0l(l+1)p_l(kr)c_l^0  \begin{bmatrix}\cos\phi_0\\ \sin\phi_0\\0\end{bmatrix}\notag\\
&-\frac{(l+1)l}{2} \bigg(-(a_l^1-a_l^{-1}) j_l(kr)c_l^1 +(b_l^1+b_l^{-1}) q_l(kr)c_l^1\bigg) \begin{bmatrix} 0\\0\\-\cos\phi_0\end{bmatrix} \notag\\
&-\frac{\mathbf{i}(l+1)l}{2}\bigg((a_l^1+a_l^{-1})  j_l(kr)c_l^1  -(b_l^1-b_l^{-1})q_l(kr)c_l^1\bigg)
\cdot\begin{bmatrix}0\\0\\-\sin\phi_0\end{bmatrix}\bigg] \Bigg\}.  \label{eq:411 x31}
\end{align}

\begin{equation}
\label{eq:411 x31}
\begin{split}
&\nu_2^\top (\nabla\wedge\mathbf{E}|_{\bsl})
\nu_2+\boldsymbol{ \eta}_2(\nu_2\wedge\mathbf{E}|_{\bsl})\\
=&\sum_{l=n}^{+\infty}\frac{1}{\sqrt{l(l+1)}}\Bigg\{\mathbf{i}k\Bigg[ -\sin\phi_0\frac{ c_l^1(l+1)l}{2}\bigg((b_l^1-b_l^{-1}) j_l(kr) +(a_l^1+a_l^{-1})q_l(kr)c_l^1
\bigg)\\
&-\cos\phi_0\frac{\mathbf{i}(l+1)l}{2} \times \bigg((b_l^1+b_l^{-1}) j_l(kr)c_l^1 +(a_l^1-a_l^{-1})q_l(kr)c_l^1\bigg)\Bigg]  \\
&\quad\times \begin{bmatrix} -\sin\phi_0\\ \cos\phi_0 \\ 0\end{bmatrix} +\boldsymbol{ \eta}_2\bigg[ -b_l^0l(l+1)p_l(kr)c_l^0  \begin{bmatrix}\cos\phi_0\\ \sin\phi_0\\0\end{bmatrix}\\
&-\frac{(l+1)l}{2} \bigg(-(a_l^1-a_l^{-1}) j_l(kr)c_l^1 +(b_l^1+b_l^{-1}) q_l(kr)c_l^1\bigg) \begin{bmatrix} 0\\0\\-\cos\phi_0\end{bmatrix} \\
&-\frac{\mathbf{i}(l+1)l}{2}\bigg((a_l^1+a_l^{-1})  j_l(kr)c_l^1  -(b_l^1-b_l^{-1})q_l(kr)c_l^1\bigg)
\cdot\begin{bmatrix}0\\0\\-\sin\phi_0\end{bmatrix}\bigg] \Bigg\}.  
\end{split}
\end{equation}	
Using a similar argument for deriving \eqref{eq:411 x31}, we have
\begin{equation}\label{eq:412 x3}
\begin{split}
& \nu_1^\top  (\nabla\wedge \mathbf{E}|_{\bsl})  \nu_1+\eta_1(\nu\wedge\mathbf{E}|_{\bsl})\\
=&-\sum_{l=n}^{+\infty}\Bigg\{ \frac{kl(l+1)}{2\sqrt{l(l+1)}}\bigg[(b_l^1+b_l^{-1})\cdot j_l(kr)\cdot c_l^1 +(a_l^1-a_l^{-1}) \notag  q_l(kr) c_l^1\bigg]\begin{bmatrix}0\\-1\\0\end{bmatrix}\\
&+\boldsymbol{ \eta}_1\Bigg(-b_l^0\sqrt{l(l+1)}p_l(kr) c_l^0  \begin{bmatrix}-1\\0\\0\end{bmatrix}  +\frac{l(l+1)}{2\sqrt{l(l+1)}}\bigg((a_l^1-a_l^{-1})  j_l(kr)c_l^1 \\
 &-(b_l^1+b_l^{-1})  q_l(kr) c_l^1 \bigg) \begin{bmatrix}0\\0\\1\end{bmatrix}\Bigg)\Bigg\}. 
 \end{split}
\end{equation}

Note that $\boldsymbol{\eta}_\ell$, $\ell=1,2$, have the expansions \eqref{eq:eta1 ex} and \eqref{eq:eta2 ex} respectively, where the coefficients of $r^0$ in \eqref{eq:eta1 ex} and \eqref{eq:eta2 ex}  are the non-zero numbers $\eta_1$ and $\eta_2$.  From Remark \ref{i2}, it is easy to see that the lowest order  of \eqref{eq:411 x31}  and \eqref{eq:412 x3} with respect to the power of $r$ is $n-1$, which  is contributed by $p_{n}\big(kr\big)$ and $q_{n}\big(kr\big)$ in \eqref{eq:411 x31}  and \eqref{eq:412 x3}.  Substituting \eqref{eq:411 x31}  and \eqref{eq:412 x3} into \eqref{y1}, and comparing the coefficients of $r^{n-1}$ on both sides of  the first, second and third component of \eqref{y1} respectively, we can derive \eqref{eq:52a}, \eqref{eq:52b} and \eqref{eq:52c}.

 We can derive \eqref{eq:lem51 a1}, \eqref{eq:lem51 a2} and \eqref{eq:beta 3rd}  by similar arguments for \eqref{eq:52a}, \eqref{eq:52b} and \eqref{eq:52c}. Indeed, the Fourier expansions of \eqref{eq:411 x31} and \eqref{eq:412 x3} can be rewritten  with the starting  summation index  $n=1$. Hence  we can obtain  \eqref{eq:lem51 a1}, \eqref{eq:lem51 a2} and \eqref{eq:beta 3rd} by comparing the coefficients of $r^0$ on both sides of  \eqref{y1} by virtue of  \eqref{eq:411 x31} and \eqref{eq:412 x3} .
 
 The proof is complete. 
\end{proof}

	\begin{lemma}\label{lem:imp pi2}
Under the same setup in Lemma~\ref{lem:imp pi12}, it holds that
	\begin{align}
0&=-\frac{4 \eta_2 c_1^1\cos^2\phi_0}{6\sqrt{2}}(b_1^1+b_1^{-1})+\frac{4\mathbf{i}\eta_2 c_1^1\sin\phi_0\cos\phi_0}{6\sqrt{2}}(b_1^1-b_1^{-1})+\frac{\mathbf{i}k\sqrt{2}c_1^0\cos\phi_0}{3}a_1^0, \label{eq:517a} \\
0&=\frac{4\eta_2 c_1^1\sin\phi_0\cos\phi_0}{6\sqrt{2}}(b_1^1+b_1^{-1})+ \frac{4\mathbf{i}\eta_2 c_1^1\sin^2\phi_0}{6\sqrt{2}}(b_1^1-b_1^{-1})+\frac{\mathbf{i}k\sqrt{2}c_1^0\sin\phi_0}{3}a_1^0,\label{eq:517b} \\
		0&=\frac{4\mathbf{i}k c_1^1\cos\phi_0}{6\sqrt{2}}(a_1^1+a_1^{-1})+\frac{4k c_1^1\sin\phi_0}{6\sqrt{2}}(a_1^1-a_1^{-1})+ \frac{\eta_2 \sqrt{2}c_1^0}{3}b_1^0\label{eq:517c}.
	\end{align}
Furthermore, if we assume that there exists   $n\in \mathbb N \backslash\{1\}$ such that \eqref{eq:lem41 cond}  is fulfilled, then it holds that

\begin{subequations}
	\begin{align}
	0=&\frac{\mathbf{i}k\sqrt{n(n+1)}c_n^0\cos\phi_0}{2n+1}a_n^0-\frac{\eta_2n(n+1)^2c_n^1\cos^2\phi_0}{2(2n+1)\sqrt{n(n+1)}}(b_n^1+b_n^{-1})\label{eq:z14}
\\
&+\frac{\mathbf{i} \eta_2n(n+1)^2c_n^1\sin\phi_0\cos\phi_0}{2(2n+1)\sqrt{n(n+1)}}(b_n^1-b_n^{-1}), \notag\\
0=&\frac{\mathbf{i}kc_n^0\sqrt{n(n+1)}\sin\phi_0}{2n+1}a_n^0+\frac{\eta_2n(n+1)^2c_n^1\sin\phi_0\cos\phi_0}{2(2n+1)\sqrt{n(n+1)}}(b_n^1+b_n^{-1}) \label{eq:z15}
\\
&+\frac{\mathbf{i}\eta_2n(n+1)^2c_n^1\sin^2\phi_0}{2(2n+1)\sqrt{n(n+1)}}(b_n^1-b_n^{-1}),
\notag \\
0=&\frac{\mathbf{i}kn(n+1)^2c_n^1\cos\phi_0}{2(2n+1)\sqrt{n(n+1)}}(a_n^1+a_n^{-1})+\frac{kn(n+1)^2c_n^1\sin\phi_0}{2(2n+1)\sqrt{n(n+1)}}(a_n^1-a_n^{-1}) \label{eq:z16}
 \\
&+ \frac{\eta_2 \sqrt{n(n+1)}c_n^0}{2n+1}b_n^0.  \notag
	\end{align}
\end{subequations}
\end{lemma}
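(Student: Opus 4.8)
The plan is to mirror the proof of Lemma~\ref{lem:imp pi12}, but to work directly with the single-plane impedance identity \eqref{ss2} on $\widetilde{\Pi}_2$, rather than with the combined edge identity \eqref{y1} that produced the mixed $\eta_1,\eta_2$ relations there. The generalized impedance condition \eqref{eq:imp2} for $j=2$ asserts precisely that the quantity evaluated in \eqref{ss2} vanishes on $\widetilde{\Pi}_2$; since $\bsl\subset\widetilde{\Pi}_2$, it vanishes in particular along $\bsl$, where $\theta=0$ and $\phi=\phi_0$. First I would restrict \eqref{ss2} to $\bsl$, insert the series \eqref{eq:eta2 ex} for $\boldsymbol{\eta}_2$, and---for the statements concerning $n\in\mathbb{N}\backslash\{1\}$---invoke the hypothesis \eqref{eq:lem41 cond} so that the double sum effectively starts at $l=n$.

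Next I would evaluate the angular factors at $\theta=0$ by means of Lemma~\ref{base21}. By \eqref{eq:plm0} one has $Y_n^0|_{\theta=0}=c_n^0$ and $Y_n^{\pm1}|_{\theta=0}=0$, while the recursions \eqref{uu} give $\partial_\theta Y_n^{\pm1}|_{\theta=0}=\tfrac{n(n+1)}{2}c_n^1 e^{\pm\mathbf{i}\phi_0}$ and $\partial_\theta Y_n^0|_{\theta=0}=0$, together with the corresponding value of $\tfrac{m}{\sin\theta}Y_n^m$ at $\theta=0$. Then, by Remark~\ref{i2} and \eqref{eq:bess sph}, the lowest power of $r$ appearing is $r^{n-1}$, contributed solely by $p_n(kr)$ and $q_n(kr)$; the terms carrying $j_n(kr)$ are of order $r^n$, and since $\boldsymbol{\eta}_2=\eta_2+\Oh(r)$ its higher-order part does not affect the coefficient of $r^{n-1}$. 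Extracting that coefficient leaves a finite combination of $a_n^0,a_n^{\pm1},b_n^0,b_n^{\pm1}$ multiplying $\boldsymbol{e_1}(0,\phi_0)$ and $\boldsymbol{e_2}(0,\phi_0)$.

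Finally I would read off the three scalar identities by taking Cartesian components. From \eqref{eq:e1e2} at $\theta=0$ one has $\boldsymbol{e_1}(0,\phi_0)=(\cos\phi_0,\sin\phi_0,0)^\top$ and $\boldsymbol{e_2}(0,\phi_0)=-(0,0,1)^\top$, so the $x$-, $y$- and $z$-components of the vanishing $r^{n-1}$-coefficient yield \eqref{eq:z14}, \eqref{eq:z15} and \eqref{eq:z16} respectively; the first two are the $\boldsymbol{e_1}$-coefficient multiplied by $\cos\phi_0$ and by $\sin\phi_0$, and the third is the $\boldsymbol{e_2}$-coefficient. The identities \eqref{eq:517a}--\eqref{eq:517c} then follow from the identical computation with the summation starting at $l=1$ and the coefficient of $r^0$ compared instead, the hypothesis \eqref{eq:lem41 cond} being vacuous in that case.

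I expect the main obstacle to be the bookkeeping rather than any conceptual difficulty. One must track the exact lowest-order coefficients of $p_n$ and $q_n$ coming from \eqref{eq:bess sph}, correctly resolve the $e^{\pm\mathbf{i}\phi_0}$ factors into $\cos\phi_0(b_n^1+b_n^{-1})$ and $\mathbf{i}\sin\phi_0(b_n^1-b_n^{-1})$ (and similarly for the $a_n^{\pm1}$), and verify that the singular-looking $\tfrac{m}{\sin\theta}Y_n^m$ contributions attached to $j_n(kr)$ indeed drop out at order $r^{n-1}$. Once the computational framework of Lemma~\ref{lem:imp pi12} is in place, each of these steps is mechanical, which is presumably why the authors will state that \eqref{eq:z14}, \eqref{eq:z15} and \eqref{eq:z16} are obtained ``in a similar manner''.
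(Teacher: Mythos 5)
Your outline follows the same overall route as the paper's proof: both restrict the impedance identity on $\widetilde\Pi_2$ to the edge $\bsl$ (the paper's \eqref{z1}), expand in powers of $r$ using \eqref{uu}, \eqref{eq:plm0} and Remark~\ref{i2}, and compare the lowest-order coefficients ($r^{n-1}$, resp.\ $r^0$) componentwise. The gap is in your final step, and it is not bookkeeping. Because you send $\theta\to 0$ \emph{inside} \eqref{ss2}, i.e.\ in the frame $\phi=\phi_0$, every surviving term is carried by $\boldsymbol{e_1}(0,\phi_0)=(\cos\phi_0,\sin\phi_0,0)^\top$ or $\boldsymbol{e_2}(0,\phi_0)=-(0,0,1)^\top$; hence, exactly as you yourself state, the first two Cartesian components of your identity are $\cos\phi_0\cdot C$ and $\sin\phi_0\cdot C$ for one and the same scalar $C$ (the $\boldsymbol{e_1}$-coefficient), i.e.\ they carry a \emph{single} independent equation. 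But the stated \eqref{eq:z14}--\eqref{eq:z15} are not of this form: dividing \eqref{eq:z14} by $\cos\phi_0$ and \eqref{eq:z15} by $\sin\phi_0$ leaves two expressions that differ in the sign of the $(b_n^1+b_n^{-1})$ term, so the stated pair amounts to \emph{two} independent relations (together they force $\eta_2\cos\phi_0\,(b_n^1+b_n^{-1})=0$), which your single relation $C=0$ cannot yield; the same applies to \eqref{eq:517a}--\eqref{eq:517b}. This is not cosmetic: the non-proportionality of \eqref{eq:z14}--\eqref{eq:z15} is precisely what makes the matrix $\mathcal{B}_n$ of Lemma~\ref{base2} nonsingular (cf.\ \eqref{eq:det AB}), so a derivation producing only $C=0$ does not support the downstream argument.

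The paper's computation differs at exactly this point: it evaluates along $\bsl$ in the $\phi=0$ frame, where $\hat{\boldsymbol\theta},\hat{\boldsymbol\phi}$ become $\hat{\mathbf{x}},\hat{\mathbf{y}}$, the angle $\phi_0$ enters only through $\nu_2$, and the curl part and the impedance part of the boundary operator attach to \emph{different} Cartesian vectors --- $\nu_2\wedge\hat{\boldsymbol{r}}$, $\nu_2\wedge\hat{\boldsymbol\theta}$, $\nu_2\wedge\hat{\boldsymbol\phi}$ from \eqref{eq:48} versus $(\nu_2\wedge\hat{\boldsymbol{r}})\wedge\nu_2$, $(\nu_2\wedge\hat{\boldsymbol\theta})\wedge\nu_2$, $(\nu_2\wedge\hat{\boldsymbol\phi})\wedge\nu_2$ in \eqref{j2} --- and it is this mixing that generates the cross-coupled entries $\cos^2\phi_0$, $\sin\phi_0\cos\phi_0$ of \eqref{eq:z14}--\eqref{eq:z15}. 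To repair your plan you would have to switch to that frame and reproduce \eqref{j2}, rather than read components off \eqref{ss2}. Be aware, though, that the issue you have stumbled on is genuinely delicate: since the left-hand side of \eqref{z1} is pointwise orthogonal to $\nu_2$, its first two components along $\bsl$ must stand in the ratio $\cos\phi_0:\sin\phi_0$ in \emph{any} correct evaluation, and reconciling that constraint with the printed, non-proportional form of the lemma requires a careful audit of the sign conventions in \eqref{uu} and in \eqref{j2}. As it stands, however, your proposal derives a strictly weaker (rank-deficient) system than the one asserted, so it does not prove the lemma as stated.
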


\begin{proof}

We first prove \eqref{eq:z14}, \eqref{eq:z15} and \eqref{eq:z16}. Since the generalized impedance condition \eqref{eq:imp2} associated with $\boldsymbol{ \eta}_2$  is imposed on $\widetilde {\Pi}_2$, we have
\begin{equation}\label{z1}
\nu_2\wedge(\nabla\wedge \mathbf{E}|_{\bsl})+ \boldsymbol{ \eta}_2(\nu_2\wedge\mathbf{E}|_{\bsl})\wedge\nu_2=\mathbf{0}.
\end{equation}
Here, we recall \eqref{eq:l}. Under the assumption \eqref{eq:lem41 cond}, using  \eqref{uu}, \eqref{eq:plm0} and \eqref{ss2}, 
we can obtain that
\begin{align}
&\nu_2\wedge(\nabla\wedge\mathbf{E}|_{\bsl})+\eta_2(\nu_2\wedge\mathbf{E}|_{\bsl})\wedge\nu_2=\sum_{l=n}^{+\infty}\Bigg\{\mathbf{i}k\bigg\{a_l^0\sqrt{l(l+1)}p_l(kr)c_l^0\cdot(\cos\phi_0,\sin\phi_0,0)^{\top} \notag \\
&+\frac{l(l+1)}{2\sqrt{l(l+1)}}\bigg((b_l^1-b_l^{-1})j_l(kr)c_l^1+(a_l^1+a_l^{-1}) q_l(kr)c_l^1\bigg) \begin{bmatrix}0\\0\\-\cos\phi_0\end{bmatrix}-\frac{l(l+1)\mathbf{i}}{2\sqrt{l(l+1)}}\bigg((b_l^1+b_l^{-1})\notag \\
&\times  j_l(kr)c_l^1+(a_l^1-a_l^{-1}) q_l(kr)c_l^1\bigg)\begin{bmatrix}0\\0\\-\sin\phi_0\end{bmatrix}\bigg\}+\boldsymbol{ \eta}_2\bigg\{-b_l^0\sqrt{l(l+1)}p_l(kr)c_l^0 \begin{bmatrix}0\\0\\1\end{bmatrix}+\frac{l(l+1)}{2\sqrt{l(l+1)}}\notag \\
&\times  \bigg((a_l^1-a_l^{-1})j_l(kr)c_l^1+(b_l^1+b_l^{-1})q_l(kr)c_l^1\bigg) \times \begin{bmatrix}\cos^2\phi_0\\-\sin\phi_0\cos\phi_0\\0\end{bmatrix}+\frac{l(l+1)\mathbf{i}}{2\sqrt{l(l+1)}}\bigg((a_l^1+a_l^{-1})\notag \\
&\times  j_l(kr)c_l^1-(b_l^1-b_l^{-1})q_l(kr)c_l^1\bigg)\times  \begin{bmatrix}\sin\phi_0\cos\phi_0\\ \sin^2\phi_0\\0\end{bmatrix}\bigg\}\Bigg\}. \label{j2}
\end{align}
Note that $\boldsymbol{\eta}_\ell$, $\ell=1,2$, have the expansions \eqref{eq:eta1 ex} and \eqref{eq:eta2 ex} respectively, where the coefficients of $r^0$ in \eqref{eq:eta1 ex} and \eqref{eq:eta2 ex}   are non zero number $\eta_1$ and $\eta_2$.  In view of Remark \ref{i2}, we know  that the lowest order  of \eqref{j2} with respect to the power of $r$ is $n-1$, which  is contributed by $p_{n}\big(kr\big)$ and $q_{n}\big(kr\big)$ in \eqref{j2}.  Substituting \eqref{eq:411 x31}  and \eqref{j2} into \eqref{z1}, comparing the coefficients of $r^{n-1}$ on both sides of  the first, the second, the third, component of \eqref{z1} respectively, we derive that \eqref{eq:z14}, \eqref{eq:z15} and \eqref{eq:z16}.

We can derive \eqref{eq:517a}, \eqref{eq:517b} and \eqref{eq:517c} by following similar arguments in deriving \eqref{eq:z14}, \eqref{eq:z15} and \eqref{eq:z16}. Indeed, the Fourier expansions of \eqref{j2} can be rewritten  with the starting  summation index  $n=1$. Hence  we can obtain \eqref{eq:517a}, \eqref{eq:517b} and \eqref{eq:517c} by comparing the coefficients of $r^0$ on both sides of  \eqref{z1} by virtue of \eqref{j2}.
\end{proof}


\begin{lemma}\label{base2}
Under the same setup in Lemma~\ref{lem:imp pi12}, one has the following linear relations:
\begin{equation}
\left\{\begin{split}
 &\beta^1_{11}(b_1^1+b_1^{-1})+\beta^1_{12}(b_1^1-b_1^{-1})+\beta^1_{13}a_1^0=0,
   \\
   &\beta^1_{21}(b_1^1+b_1^{-1})+\beta^1_{22}(b_1^1-b_1^{-1})+\beta^1_{23}a_1^0=0,
    \\
   &\beta^1_{31}(b_1^1+b_1^{-1})+\beta^1_{32}(b_1^1-b_1^{-1})+\beta^1_{33}a_1^0=0,\label{eq:lem51 1}
 \end{split}\right.
   \end{equation}
where
\begin{equation}\label{eq:matrix entry}
\begin{split}
\beta_{11}^1&=-\frac{4 \eta_2 c_1^1\cos^2\phi_0}{6\sqrt{2}},\quad
\beta_{12}^1=\frac{4\mathbf{i}\eta_2 c_1^1\sin\phi_0\cos\phi_0}{6\sqrt{2}},\quad\beta_{13}^1=\frac{\mathbf{i}k\sqrt{2}c_1^0\cos\phi_0}{3}, \\
\beta_{21}^1&=\frac{4\eta_2 c_1^1\sin\phi_0\cos\phi_0}{6\sqrt{2}},\quad \beta_{22}^1=\frac{4\mathbf{i}\eta_2 c_1^1\sin^2\phi_0}{6\sqrt{2}}, \quad \beta_{23}^1=\frac{\mathbf{i}k\sqrt{2}c_1^0\sin\phi_0}{3},\\
 \beta_{31}^1&=-\frac{4c_1^1(-\eta_1+\eta_2\cos\phi_0)}{6\sqrt{2}} , \quad \beta_{32}^1=\frac{4\eta_2 c_1^1\sin\phi_0\mathbf{i}}{6\sqrt{2}},\quad\beta_{33}^1=0.
\end{split}
\end{equation}
If we assume that there exists $n\in \mathbb N \backslash\{1\}$ such that \eqref{eq:lem41 cond} is fulfilled, then one has that
\begin{equation}\label{eq:lem51 2}
\left\{\begin{split}
 &\beta^n_{11}(b_n^1+b_n^{-1})+\beta^n_{12}(b_n^1-b_n^{-1})+\beta^n_{13}a_n^0=0,
   \\
   &\beta^n_{21}(b_n^1+b_n^{-1})+\beta^n_{22}(b_n^1-b_n^{-1})+\beta^n_{23}a_n^0=0,
    \\
   &\beta^n_{31}(b_n^1+b_n^{-1})+\beta^n_{32}(b_n^1-b_n^{-1})+\beta^n_{33}a_n^0=0,
\end{split}\right.
\end{equation}
where
\begin{align*}
\beta_{11}^n&=-\frac{\eta_2n(n+1)^2c_n^1\cos^2\phi_0}{2(2n+1)\sqrt{n(n+1)}},\\
\beta_{12}^n&=\frac{\mathbf{i}\eta_2n(n+1)^2c_n^1\sin\phi_0\cos\phi_0}{2(2n+1)\sqrt{n(n+1)}},\quad\beta_{13}^n=\frac{\mathbf{i}k\sqrt{n(n+1)}c_n^0\cos\phi_0}{2n+1}, \\
\beta_{21}^n&=\frac{\eta_2n(n+1)^2c_n^1\sin\phi_0\cos\phi_0}{2(2n+1)\sqrt{n(n+1)}},\quad \beta_{22}^n=\frac{\mathbf{i}\eta_2n(n+1)^2c_n^1\sin^2\phi_0}{2(2n+1)\sqrt{n(n+1)}}, \\
\beta_{23}^n&=\frac{\mathbf{i}k\sqrt{n(n+1)}c_n^0\sin\phi_0}{2n+1},\quad {\beta_{31}^n=-\frac{n(n+1)^2c_n^1(-\eta_1+\eta_2\cos\phi_0)}{2(2n+1)\sqrt{n(n+1)}}}, \\
\beta_{32}^n&=\frac{\mathbf{i}\eta_2n(n+1)^2c_n^1\sin\phi_0}{2(2n+1)\sqrt{n(n+1)}},\quad\beta_{33}^n=0.
\end{align*}
Furthermore, if
$	\alpha \neq \frac{1}{2}$ and $	\alpha \neq \frac{3}{2},$
then it holds that
\begin{equation}\label{eq:lem41 zero}
a_n^0=b_n^{\pm1}=0.
\end{equation}


\end{lemma}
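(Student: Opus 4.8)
The plan is to read off \eqref{eq:lem51 1} and \eqref{eq:lem51 2} from the two preceding lemmas and then to force the vanishing through a determinant computation. The first two rows of \eqref{eq:lem51 2} are precisely \eqref{eq:z14} and \eqref{eq:z15} of Lemma~\ref{lem:imp pi2}, while the third row is the rearrangement of \eqref{eq:52c} of Lemma~\ref{lem:imp pi12} in which every term is moved to one side; reading off the coefficients of $b_n^1+b_n^{-1}$, $b_n^1-b_n^{-1}$ and $a_n^0$ produces exactly the entries $\beta^n_{ij}$ listed in the lemma, and the case $n=1$ in \eqref{eq:lem51 1} follows identically from \eqref{eq:517a}, \eqref{eq:517b} and \eqref{eq:beta 3rd} (the expansions being restarted at the summation index $n=1$). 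Thus the analytic content is already in place; what remains is purely linear algebra.

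The key observation is that \eqref{eq:lem51 2} is a homogeneous linear system in the three scalar unknowns $X_1:=b_n^1+b_n^{-1}$, $X_2:=b_n^1-b_n^{-1}$ and $X_3:=a_n^0$, so it suffices to prove that the coefficient matrix $(\beta^n_{ij})_{1\le i,j\le 3}$ is nonsingular. Setting $A:=\frac{n(n+1)^2 c_n^1}{2(2n+1)\sqrt{n(n+1)}}$ and $B:=\frac{k\sqrt{n(n+1)}\,c_n^0}{2n+1}$, the entries collapse to $\beta^n_{11}=-A\eta_2\cos^2\phi_0$, $\beta^n_{12}=\mathbf{i}A\eta_2\sin\phi_0\cos\phi_0$, $\beta^n_{13}=\mathbf{i}B\cos\phi_0$, $\beta^n_{21}=A\eta_2\sin\phi_0\cos\phi_0$, $\beta^n_{22}=\mathbf{i}A\eta_2\sin^2\phi_0$, $\beta^n_{23}=\mathbf{i}B\sin\phi_0$, $\beta^n_{31}=A(\eta_1-\eta_2\cos\phi_0)$, $\beta^n_{32}=\mathbf{i}A\eta_2\sin\phi_0$ and $\beta^n_{33}=0$.

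I would then expand along the third column, which contains the zero $\beta^n_{33}$, so only two $2\times 2$ minors survive. A direct evaluation gives the first minor as $\mathbf{i}A^2\eta_2\sin^2\phi_0\,(2\eta_2\cos\phi_0-\eta_1)$ and the second as $-\mathbf{i}A^2\eta_1\eta_2\sin\phi_0\cos\phi_0$. Assembling the cofactor expansion, the two $\eta_1$-dependent contributions cancel exactly and one is left with
\begin{equation*}
\det\big(\beta^n_{ij}\big)=-2A^2 B\,\eta_2^2\cos^2\phi_0\sin^2\phi_0.
\end{equation*}
Since $c_n^0,c_n^1\ne 0$, $k>0$ and $\eta_2\in\mathbb{C}\backslash\{0\}$, the factors $A$, $B$ and $\eta_2$ are all nonzero, so the determinant vanishes precisely when $\sin\phi_0\cos\phi_0=0$, i.e.\ when $\phi_0=\alpha\pi\in\{\tfrac{\pi}{2},\pi,\tfrac{3\pi}{2}\}$. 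The standing hypotheses $\alpha\in(0,2)$, $\alpha\ne 1$ together with $\alpha\ne\tfrac12$ and $\alpha\ne\tfrac32$ rule out exactly these three angles, so the matrix is invertible and the only solution is $X_1=X_2=X_3=0$. This yields $b_n^1=b_n^{-1}=0$ and $a_n^0=0$, which is \eqref{eq:lem41 zero}.

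The hard part is the determinant evaluation: the entries interlace $\eta_1,\eta_2$ with products of $\sin\phi_0,\cos\phi_0$ and the imaginary unit, and a naive expansion spawns several terms. The decisive point is the cancellation of all $\eta_1$-terms, which is what makes the determinant factor as a clean multiple of $\cos^2\phi_0\sin^2\phi_0$ and thereby localizes the degeneracy to the angles $\alpha=\tfrac12,\tfrac32$ (the value $\alpha=1$ being excluded a priori). Everything else is bookkeeping.
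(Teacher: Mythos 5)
Your proposal is correct and takes essentially the same route as the paper: it assembles \eqref{eq:lem51 1} from \eqref{eq:517a}, \eqref{eq:517b}, \eqref{eq:beta 3rd} and \eqref{eq:lem51 2} from \eqref{eq:z14}, \eqref{eq:z15}, \eqref{eq:52c}, and then concludes via nonsingularity of the coefficient matrix under the angle restrictions. Your explicitly computed determinant $-2A^2B\,\eta_2^2\sin^2\phi_0\cos^2\phi_0$ coincides exactly with the paper's stated value $\left|{\mathcal B}_n\right| =-k\eta_2^2\left(\tfrac{n+1}{2n+1}\right)^3\tfrac{n\sqrt{n(n+1)}}{2}\big(c_n^1\big)^2c_n^0\sin^2\phi_0 \cos^2\phi_0$ (the paper merely asserts it "after straightforward calculations"), so the two arguments are the same, with yours supplying the cofactor-expansion details and the $\eta_1$-cancellation that the paper leaves implicit.
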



\begin{proof}

Combining  \eqref{eq:517a}, \eqref{eq:517b} with \eqref{eq:beta 3rd}, we can obtain \eqref{eq:lem51 1}. Similarly, by virtue of  \eqref{eq:52c}, \eqref{eq:z14} and \eqref{eq:z15}, we can derive \eqref{eq:lem51 2}. After straightforward  calculations, it can be verified that the determinant of  coefficients matrices  \eqref{eq:lem51 2} is given by
 \begin{equation}\label{eq:det AB}
\begin{split}
 \left|{\mathcal B}_n
\right| &=-k\eta_2^2\left(\frac{n+1}{2n+1}\right)^3\frac{n\sqrt{n(n+1)}}{2}\big(c_n^1\big)^2c_n^0\sin^2\phi_0 \cos^2\phi_0,
\end{split}
\end{equation}
where $c_n^0,c_n^1$ are nonzero constants  defined in \eqref{sphe harmonic}. Recall that $\boldsymbol{\eta}_\ell$, $\ell=1,2$, have the expansions \eqref{eq:eta1 ex} and \eqref{eq:eta2 ex} respectively, where the coefficients of $r^0$ in \eqref{eq:eta1 ex} and \eqref{eq:eta2 ex}   are non zero number $\eta_1$ and $\eta_2$. 
 Since $\phi_0=\alpha \pi \neq \pi/2$, $\phi_0=\alpha \pi \neq 3\pi/2$,  $\eta_2 \neq 0$ and $k\in \R_+$, we conclude that  ${\mathcal B}_n$ are nonsingular, which readily implies \eqref{eq:lem41 zero}.
\end{proof}


The following  two important lemmas reveal the recursive relationships for $a_n^{\pm m}$ and $b_n^{\pm m}$, where $m=0,1,\ldots, n$, which will be used to  characterize the vanishing order of $\mathbf E$ with respect to the the corresponding dihedral angle of the edge-corner  ${\mathcal E}(\widetilde \Pi_1, \widetilde \Pi_2,\bsl) \Subset \Omega$ in   Theorem \ref{th:two imp}.

\begin{lemma}\label{lem:54}
Let $\mathbf{E}$ be a a solution to \eqref{eq:eig}, whose radial wave expansion in $B_{\rho_0}(\mathbf{0}) $ is given by \eqref{mix pi21}.  Consider a generalized impedance  edge-corner ${\mathcal E}(\widetilde{ \Pi}_1, \widetilde{ \Pi}_2,\bsl) \Subset \Omega$ with $\angle(\Pi_{1},\Pi_2)=\phi_0=\alpha \pi$, where $\alpha\in(0,2)$ and $\alpha \neq 1$. Suppose that the generalized impedance parameters $ \boldsymbol{ \eta}_j$ on $\widetilde{ \Pi}_j $, $j=1, 2$, are given by \eqref{eq:eta1 ex} and \eqref{eq:eta2 ex} respectively.  Assume that there exists   $n\in \mathbb N \backslash\{1\}$ such that
\begin{equation}\label{eq:lem54 cond}
a_l^{m}=b_l^{m}=0, \quad l =1,\ldots, n-1, \mbox{ and } m\in [l]_0.
\end{equation}
Then we have the following  recursive  linear equations:
   \begin{equation}\label{c2}
   \left\{
   \begin{split}
   0=&\mathbf{i}k\frac{\sqrt{n(n+1)}}{2n+1}c_n^0a_n^0-\frac{\eta_1(n+1) }{2(2n+1) }\frac{c_n^1(n+1)n}{\sqrt{n(n+1)}}(b_n^1+b_n^{-1}),
   \\
   0=&\mathbf{i}k\frac{\sqrt{n(n+1)}}{2n+1}c_n^1(a_n^1+a_n^{-1})-\frac{\eta_1(n+1)}{2(2n+1)}\frac{c_n^2}{\sqrt{n(n+1)}}(n+2)(n-1)(b_n^2+b_n^{-2})
    \\
&+ \frac{\eta_1 (n+1) }{2n+1}\frac{c_n^0}{\sqrt{n(n+1)}}b_n^0, 
 \\
0=&\mathbf{i}k\frac{\sqrt{n(n+1)}}{2n+1}c_n^{m}(a_n^{m}+a_n^{-m})- \frac{\eta_1(n+1)}{2(2n+1)}\frac{c_n^{m+1}}{\sqrt{n(n+1)}} (n+m+1)(n-m) 
 \\
&\times (b_n^{m+1}+b_n^{-(m+1)})  +\frac{\eta_1(n+1)}{2(2n+1) }\frac{c_n^{m-1}}{\sqrt{n(n+1)}}(b_n^{m-1}+b_n^{-(m-1)}), \, m=2,3,\ldots,n-1,
 \\
0=&\mathbf{i}k\frac{\sqrt{n(n+1)}}{2n+1}c_n^n(a_n^n+a_n^{-n})+\frac{\eta_1(n+1)}{2(2n+1) }\frac{c_n^{n-1}}{\sqrt{n(n+1)}}(b_n^{n-1}+b_n^{-(n-1)}),
\end{split}
\right. 	
\end{equation}
and
\begin{equation}\label{m2}
\left\{
\begin{split}
0=&\mathbf{i}k\frac{n+1}{2(2n+1)}\frac{c_n^1}{\sqrt{n(n+1)}}(n+1)n(a_n^1+a_n^{-1})+\eta_1 \frac{c_n^0\sqrt{n(n+1)}}{2n+1}b_n^0,\\
0=&\mathbf{i}k\frac{n+1}{2(2n+1) }\frac{c_n^2}{\sqrt{n(n+1)}}(n+2)(n-1)(a_n^2+a_n^{-2})-\mathbf{i}k \frac{n+1}{2n+1}\frac{c_n^0 }{\sqrt{n(n+1)}}a_n^0\\
&+\eta_1\frac{c_n^1 \sqrt{n(n+1)}}{2n+1}(b_n^1+b_n^{-1}), \\
0=&\mathbf{i}k\frac{n+1}{2(2n+1)}\frac{c_n^m}{\sqrt{n(n+1)}} (n+m)(n-m+1)(a_n^m+a_n^{-m})-\mathbf{i}k\cdot\frac{n+1}{2(2n+1)} \frac{c_n^{m-2}}{\sqrt{n(n+1)}}\\
&\times (a_n^{m-2}+a_n^{-(m-2)})+\eta_1\frac{c_n^{m-1}\sqrt{n(n+1)}}{2n+1}(b_n^{m-1}+b_n^{-(m-1)}),\quad m=3,4,\ldots,n, \\
0=&-\mathbf{i}k\frac{n+1}{2(2n+1)}\frac{c_n^{n-1}}{\sqrt{n(n+1)}} (a_n^{n-1}+a_n^{-(n-1)})+\eta_1\frac{c_n^n \sqrt{n(n+1)}}{2n+1}(b_n^n+b_n^{-n})=0.
\end{split}
\right.
\end{equation}
\end{lemma}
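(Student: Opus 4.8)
The plan is to extract both \eqref{c2} and \eqref{m2} from the \emph{single} generalized impedance condition \eqref{eq:imp2} imposed on $\widetilde\Pi_1$, i.e. from
\[
\nu_1\wedge(\nabla\wedge\mathbf{E}|_{\widetilde\Pi_1})+\boldsymbol{\eta}_1(\nu_1\wedge\mathbf{E}|_{\widetilde\Pi_1})\wedge\nu_1=\mathbf{0}.
\]
The left-hand side has already been expanded in \eqref{ss1} as a series whose terms are scalar multiples of the two frame vectors $\boldsymbol{e}_1(\theta,0)$ and $\boldsymbol{e}_2(\theta,0)$ of \eqref{eq:e1e2}. Since these vectors are linearly independent for every $\theta$, the vanishing of the whole expression is equivalent to the separate vanishing of its $\boldsymbol{e}_1$-component and its $\boldsymbol{e}_2$-component, each of which is an (absolutely convergent) scalar series in $r$ and $\theta$ on $\widetilde\Pi_1$, where $\phi=0$. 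I expect \eqref{c2} to come from the $\boldsymbol{e}_1$-component and \eqref{m2} from the $\boldsymbol{e}_2$-component.

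Next I would impose the hypothesis \eqref{eq:lem54 cond}, which annihilates all modes with $l\le n-1$, so that both scalar series start at $l=n$. By Remark~\ref{i2} the functions $p_n(kr)$ and $q_n(kr)$ carry the lowest power $r^{n-1}$, whereas $j_n(kr)=\Oh(r^{n})$; moreover, writing $\boldsymbol{\eta}_1$ through its expansion \eqref{eq:eta1 ex}, only the constant term $\eta_1$ contributes at order $r^{n-1}$, since every $\eta_{1,j}(\theta)r^{j}$ with $j\ge1$ raises the power and every $l>n$ term is already $\Oh(r^{n})$. Because each component vanishes identically on $\widetilde\Pi_1$, its $r^{n-1}$-coefficient (as a function of $\theta$) must vanish; by the above this coefficient is built solely from $a_n^m,b_n^m$, the leading parts of $p_n,q_n$, and the constant $\eta_1$, with the $j_n$-terms dropping out. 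This reduction is exactly what makes \eqref{c2} and \eqref{m2} depend only on $\eta_1$, and not on $\eta_2$ or on the higher expansion coefficients of $\boldsymbol{\eta}_1$.

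It then remains to separate the $\theta$-dependence. On $\widetilde\Pi_1$ one has $Y_n^m=c_n^mP_n^{|m|}(\cos\theta)$, and I would rewrite the surviving angular factor $\partial_\theta Y_n^m$ through the first recursion in \eqref{uu}, so that every term becomes a combination of the neighbouring Legendre functions $P_n^{|m|\pm1}(\cos\theta)$; the negative orders produced when $|m|=0$ are reduced to positive ones via \eqref{eq:pnm neg}. After reindexing, the $r^{n-1}$-coefficient of each component takes the form $\sum_{s=0}^{n}(\cdots)\,P_n^{s}(\cos\theta)=0$. Invoking the linear independence of $\{P_n^{s}(\cos\theta)\}_{s=0}^{n}$ (equivalently, the orthogonality \eqref{ortho3}), I would set the coefficient of each $P_n^{s}$ to zero; using $c_n^m=c_n^{-m}$ to merge the $m=\pm s$ contributions produces precisely the symmetric pairs $a_n^{s}+a_n^{-s}$ and $b_n^{s}+b_n^{-s}$ displayed in \eqref{c2} and \eqref{m2}.

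The main obstacle is the bookkeeping in this last step. After substituting \eqref{uu}, the index shifts $|m|\mapsto|m|\pm1$ couple each $P_n^{s}$ to three neighbouring modes, and one must carry the numerical prefactors $(n+|m|)(n-|m|+1)$ together with the $\tfrac12$'s correctly through the reindexing while handling the boundary cases $s=0,1,n$. In particular, the appearance of $b_n^{0}$ in the $s=1$ relation of \eqref{c2} (and of $a_n^{0}$ in the $s=1$ relation of \eqref{m2}) is produced only by the negative-order reduction \eqref{eq:pnm neg}, and the top relations ($s=n$) lose one of the two neighbouring terms because the shifted order $n+1$ exceeds $n$. These calculations are routine but delicate; once they are carried out, \eqref{c2} and \eqref{m2} follow by reading off the coefficients of $\boldsymbol{e}_1(\theta,0)$ and $\boldsymbol{e}_2(\theta,0)$, respectively.
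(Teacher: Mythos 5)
Your proposal is correct and follows essentially the same route as the paper's own proof: substitute the vanishing hypothesis into the expansion \eqref{ss1} on $\widetilde\Pi_1$, split along the linearly independent frame $\boldsymbol{e}_1(\theta,0)$, $\boldsymbol{e}_2(\theta,0)$, extract the $r^{n-1}$-coefficient (where only $p_n$, $q_n$ and the constant part $\eta_1$ of $\boldsymbol{\eta}_1$ survive, the $j_n$-terms being $\Oh(r^n)$), and then use \eqref{uu}, \eqref{eq:pnm neg}, $c_n^m=c_n^{-m}$ and the orthogonality \eqref{ortho3} to match Legendre coefficients, with \eqref{c2} coming from the $\boldsymbol{e}_1$-component and \eqref{m2} from the $\boldsymbol{e}_2$-component exactly as in the paper's equations \eqref{eq:457pi1} and \eqref{eq:460 anbn}.
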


\begin{proof}
Since the generalized impedance condition \eqref{eq:imp2} associated with $\boldsymbol{ \eta}_1$  is imposed on $\widetilde {\Pi}_1$,  substituting \eqref{eq:lem54 cond}  into  \eqref{ss1}, by virtue of \eqref{uu},  we derive that
\begin{equation}\label{eq:im bd1n}
\begin{split}
\mathbf{0}
    =
    &\sum_{l=n}^{\infty}\sum_{m=-l }^{l}  \frac{1}{\sqrt{l(l+1)}}\Bigg\{\bigg(
    \mathbf{i}ka_l^ml(l+1)p_l(kr)c_l^mP_l^m+\boldsymbol{ \eta}_1 a_l^mj_l(kr)    c_l^m  \frac{\sgn(m)}{2} \big[P_{l-1}^{|m|+1}(\cos\theta)\\
    &+(l+|m|-1)(l+|m|)P_{l-1}^{|m|-1}(\cos\theta)\big]-\boldsymbol{ \eta}_1 b_l^mq_l(kr)\frac{c_l^m}{2}\big[(l+|m|)(l-|m|+1)\\
    &\times P_l^{|m|-1}(\cos\theta)-P_l^{|m|+1}(\cos\theta)\big]\bigg) \boldsymbol{e_1}(\theta,0) +\bigg(\mathbf{i}kb_l^m j_l(kr)  c_l^m  \frac{\sgn(m)}{2} \big[P_{l-1}^{|m|+1}(\cos\theta)\\
    &+(l+|m|-1)(l+|m|)P_{l-1}^{|m|-1}(\cos\theta)\big]+\mathbf{i}ka_l^m
    q_l(kr) \frac{c_l^m }{2}\big[(l+|m|)(l-|m|+1)\\
    &\times P_l^{|m|-1}(\cos\theta)-P_l^{|m|+1}(\cos\theta)\big]
    +\boldsymbol{ \eta}_1 b_l^ml(l+1)p_l(kr)c_l^mP_l^m \bigg) \boldsymbol{e_2}(\theta,0)\Bigg\},
    \end{split}
      \end{equation}
      where $\boldsymbol{e_1}(\theta, 0)$, $\boldsymbol{e_2}(\theta,0)$   $\boldsymbol{e_1}(\theta,\phi_0)$ and $\boldsymbol{e_2}(\theta,\phi_0)$ are defined in \eqref{eq:e1e2}.

  Recall that $\boldsymbol{\eta}_\ell$, $\ell=1,2$, have the expansion \eqref{eq:eta1 ex} and \eqref{eq:eta2 ex} respectively, where the coefficients of $r^0$ in \eqref{eq:eta1 ex} and \eqref{eq:eta2 ex}   are non zero number $\eta_1$ and $\eta_2$. The lowest order term in \eqref{eq:im bd1n} with respect to the power of $r$ is $r^{n-1}$, which is contributed by $p_n(kr)$ and $q_n(kr)$ from Remark \ref{i2}. Furthermore, it is noted that coefficients of $r^{n-1}$ in $  p_n(kr)$ and $  q_n(kr)$  are $\frac{k^{n-1} }{(2n+1) (2n-1)!!} $ and $\frac{(n+1)k^{n-1} }{(2n+1) (2n-1)!!} $ respectively. Due to the fact that  $ \boldsymbol{e}_{1}\left(\theta, \phi\right)$ and $   \boldsymbol{e}_{2}\left(\theta, \phi\right)$ are linearly independent for any $\theta$ and $\phi$,  from   Lemma \ref{lem:coeff0}, comparing the coefficient of $r^{n-1}$ both sides of \eqref{eq:im bd1n}  associated with $ \boldsymbol{e}_{1}\left(\theta, \phi\right)$ for $\phi=0$, we have
  \begin{align}
 0= &\mathbf{i}k\sum_{m=-n\atop m\neq 0 }^{n}  a_n^m\frac{\sqrt{n(n+1)}}{2n+1}c_n^mP_n^m(\cos\theta)-\eta_1\sum_{m=-n}^{n}b_n^m\cdot\frac{n+1}{2n+1}\frac{1}{\sqrt{n(n+1)}}\frac{c_n^m}{2}
  \cdot\bigg((n+m)\notag\\
  &\quad \times (n-m+1)P_n^{m-1}(\cos\theta)-P_n^{m+1}(\cos\theta)\bigg) +\eta_1b_n^0 \frac{n+1}{2n+1}\frac{c_n^0}{\sqrt{n(n+1)}}
   P_n^{1}(\cos\theta),\label{eq:457pi1}
     \end{align}     
     where for the index $m=0$ in \eqref{eq:im bd1n} we use the property \eqref{eq:pnm neg}, and $c_n^m$, $m=0,1,\ldots,n$, are nonzero constants defined in \eqref{sphe harmonic}. Utilizing the orthogonality condition \eqref{ortho3}, from \eqref{eq:457pi1} we can deduce \eqref{c2}.

      Similarly,  comparing the coefficient of $r^{n-1}$ both sides of \eqref{eq:im bd1n}  associated with $ \boldsymbol{e}_{2}\left(\theta, \phi\right)$ for $\phi=0$  in \eqref{eq:im bd1n}, from   Lemma \ref{lem:coeff0}, we  obtain the following $n+1$ equations:
\begin{align}
  0=&\mathbf{i}k\sum_{m=-n \atop m\neq 0}^{n}a_n^m\cdot\frac{n+1}{2n+1}\frac{1}{\sqrt{n(n+1)}}c_n^m\bigg((n+m)(n-m+1)P_n^{m-1}(\cos\theta)-P_n^{m+1}(\cos\theta)\bigg)\notag\\
  &-\mathbf{i}ka_n^0\cdot\frac{n+1}{2n+1}\frac{1}{\sqrt{n(n+1)}}c_n^0P_n^{1}(\cos\theta) +\eta_1\sum_{m=-n}^{n}b_n^m\frac{\sqrt{n(n+1)}}{2n+1}c_n^mP_n^m(\cos\theta) ,\label{eq:460 anbn}
 \end{align}   
 where for the index $m=0$ in \eqref{eq:im bd1n} we use the property \eqref{eq:pnm neg}.    By virtue of \eqref{eq:460 anbn},      utilizing the orthogonality condition \eqref{ortho3},  we can obtain \eqref{m2}.
   \end{proof}

\begin{lemma}\label{lem:55}
Under the same setup to Lemma~\ref{lem:54} and assuming that there exists  $n\in \mathbb N \backslash\{1\}$ such that \eqref{eq:lem54 cond} is fulfilled,  we have the following  recursive  linear equations:
\begin{align} 	
   0=&\mathbf{i}k\frac{\sqrt{n(n+1)}}{2n+1}c_n^0a_n^0-\frac{\eta_2(n+1) }{2(2n+1) }\frac{c_n^1(n+1)n}{\sqrt{n(n+1)}}(b_n^1e^{\bsi \alpha\cdot\pi} +b_n^{-1} e^{-\bsi \alpha\cdot\pi}), \notag
    \\
   0=&\mathbf{i}k\frac{\sqrt{n(n+1)}}{2n+1}c_n^1(a_n^1 e^{\bsi \alpha\cdot\pi}+a_n^{-1}e^{-\bsi \alpha\cdot\pi})-\frac{\eta_2(n+1)}{2(2n+1)}\frac{c_n^2 (n+2)(n-1)}{\sqrt{n(n+1)}}\notag
    \\
&\times (b_n^2 e^{\bsi 2\alpha\cdot\pi}+b_n^{-2}e^{-\bsi 2\alpha\cdot\pi})+ \frac{\eta_2 (n+1) }{2n+1}\frac{c_n^0}{\sqrt{n(n+1)}}b_n^0,  \notag
 \\
0=&\mathbf{i}k\frac{\sqrt{n(n+1)}}{2n+1}c_n^{m}(a_n^{m} e^{\bsi m\alpha\cdot\pi}+a_n^{-m}e^{-\bsi m\alpha\cdot\pi})- \frac{\eta_2(n+1)}{2(2n+1)}\frac{c_n^{m+1}}{\sqrt{n(n+1)}} (n+m+1)\notag
 \\
&\times (n-m) (b_n^{m+1}e^{\bsi (m+1)\alpha\cdot\pi}+b_n^{-(m+1)}e^{-\bsi (m+1)\alpha\cdot\pi})  +\frac{\eta_2(n+1)}{2(2n+1) }\frac{c_n^{m-1}}{\sqrt{n(n+1)}} \notag
\\
&\times  (b_n^{m-1} e^{\bsi (m-1)\alpha\cdot\pi} +b_n^{-(m-1)}e^{-\bsi (m-1)\alpha\cdot\pi} ), \quad  m=2,3,\ldots,n-1, \label{c22} 
\\
0=&\mathbf{i}k\frac{\sqrt{n(n+1)}}{2n+1}c_n^n(a_n^n e^{\bsi n\alpha\cdot\pi}+a_n^{-n}e^{-\bsi n \alpha\cdot\pi} )+\frac{\eta_2(n+1)}{2(2n+1) }\frac{c_n^{n-1}}{\sqrt{n(n+1)}} \notag
\\
&\times  (b_n^{n-1}e^{\bsi (n-1)\alpha \pi}+b_n^{-(n-1)}e^{-\bsi (n-1)\alpha \pi}),\notag
\end{align}
 and
 \begin{align}
0=&\mathbf{i}k\frac{n+1}{2(2n+1)}\frac{c_n^1}{\sqrt{n(n+1)}}(n+1)n(a_n^1 e^{\bsi\alpha \cdot \pi}+a_n^{-1} e^{-\bsi \alpha \cdot  \pi})+\eta_2 \frac{c_n^0\sqrt{n(n+1)}}{2n+1}b_n^0,\notag\\
0=&\mathbf{i}k\frac{n+1}{2(2n+1) }\frac{c_n^2}{\sqrt{n(n+1)}}(n+2)(n-1)(a_n^2 e^{2 \bsi\alpha \cdot \pi}+a_n^{-2} e^{-2 \bsi\alpha \cdot \pi} )-\mathbf{i}k \frac{n+1}{2n+1}\notag\\
&\times \frac{c_n^0 }{\sqrt{n(n+1)}}a_n^0 +\eta_2\frac{c_n^1 \sqrt{n(n+1)}}{2n+1}(b_n^1 e^{\bsi\alpha \cdot \pi} +b_n^{-1} e^{-\bsi\alpha \cdot \pi}), \label{m22} \\
0=&\mathbf{i}k\frac{n+1}{2(2n+1)}\frac{c_n^m (n+m)(n-m+1)}{\sqrt{n(n+1)}} (a_n^m e^{ \bsi m \alpha \cdot \pi} +a_n^{-m} e^{-\bsi m\alpha \cdot \pi} )-\mathbf{i}k\cdot\frac{n+1}{2(2n+1)}\notag\\
&\times  \frac{c_n^{m-2}}{\sqrt{n(n+1)}} (a_n^{m-2} e^{\bsi (m-2)\alpha \cdot \pi} +a_n^{-(m-2)} e^{-\bsi (m-2)\alpha \cdot \pi} )+\eta_2\frac{c_n^{m-1}\sqrt{n(n+1)}}{2n+1}\notag \\
&\times (b_n^{m-1} e^{\bsi (m-1)\alpha \cdot \pi} +b_n^{-(m-1)}e^{-\bsi (m-1)\alpha \cdot \pi} ),\quad m=3,4,\ldots,n,\notag \\
0=&-\mathbf{i}k\frac{n+1}{2(2n+1)}\frac{c_n^{n-1}}{\sqrt{n(n+1)}} (a_n^{n-1} e^{\bsi (n-1)\alpha \cdot \pi} +a_n^{-(n-1)} e^{-\bsi (n-1) \alpha \cdot \pi} )+\eta_2\frac{c_n^n \sqrt{n(n+1)}}{2n+1}\notag\\
&\times (b_n^n e^{\bsi n \alpha \cdot \pi}+b_n^{-n} e^{-\bsi n \alpha \cdot \pi} )=0.\notag
\end{align} 
 \end{lemma}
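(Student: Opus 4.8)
The plan is to carry over the argument of Lemma~\ref{lem:54} almost verbatim, with the generalized impedance condition on $\widetilde{\Pi}_1$ replaced by the one on $\widetilde{\Pi}_2$. Since \eqref{eq:imp2} associated with $\boldsymbol{\eta}_2$ holds on $\widetilde{\Pi}_2$, I would take the representation \eqref{ss2} of $\nu_2\wedge(\nabla\wedge\mathbf{E}|_{\widetilde{\Pi}_2})+\boldsymbol{\eta}_2(\nu_2\wedge\mathbf{E}|_{\widetilde{\Pi}_2})\wedge\nu_2$ and set it to $\mathbf{0}$. Feeding in the vanishing hypothesis \eqref{eq:lem54 cond} annihilates every spherical-wave mode of order $l\le n-1$, so the series starts at $l=n$, and the angular factors $\partial Y_l^m/\partial\theta$ and $(m/\sin\theta)Y_l^m$ are then rewritten through the Legendre recursions \eqref{uu}, exactly as in the derivation of \eqref{eq:im bd1n}.

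The one structural novelty compared with Lemma~\ref{lem:54} is the azimuthal evaluation. On $\widetilde{\Pi}_2$ one has $\phi=\phi_0=\alpha\pi$, so by \eqref{sphe harmonic}
\[
Y_l^m\big|_{\phi=\phi_0}=c_l^m P_l^{|m|}(\cos\theta)\,e^{\bsi m\alpha\pi},
\]
whence every coefficient carrying azimuthal index $j$ picks up the constant phase $e^{\bsi j\alpha\pi}$, in contrast to the factor $e^{\bsi j\cdot 0}=1$ that occurred on $\widetilde{\Pi}_1$. These phases are precisely the weights $e^{\pm\bsi m\alpha\pi}$ appearing in \eqref{c22} and \eqref{m22}, with $\eta_1$ replaced throughout by $\eta_2$.

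With this in hand I would proceed as before: by Remark~\ref{i2} the lowest power of $r$ in the resulting expansion is $r^{n-1}$, supplied by $p_n(kr)$ and $q_n(kr)$; and since $\boldsymbol{e}_1(\theta,\phi_0)$ and $\boldsymbol{e}_2(\theta,\phi_0)$ from \eqref{eq:e1e2} are linearly independent, Lemma~\ref{lem:coeff0} lets me equate to zero the $r^{n-1}$ coefficients of the components along $\boldsymbol{e}_1(\theta,\phi_0)$ and along $\boldsymbol{e}_2(\theta,\phi_0)$ separately. This produces two angular identities of the same form as \eqref{eq:457pi1} and \eqref{eq:460 anbn}, now decorated with the phases $e^{\bsi m\alpha\pi}$; the contributions from negative azimuthal indices are converted through \eqref{eq:pnm neg} just as in Lemma~\ref{lem:54}, and testing against $P_n^j(\cos\theta)$ via the orthogonality relation \eqref{ortho3} finally isolates the individual recursions \eqref{c22} and \eqref{m22}.

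The only genuinely delicate point is the phase bookkeeping in this last step. After \eqref{uu} shifts a Legendre superscript by $\pm1$, each surviving phase $e^{\bsi m\alpha\pi}$ must be kept attached to the \emph{original} mode index $m$, that is, to the coefficient $a_n^{\pm m}$ or $b_n^{\pm m}$ it multiplies, rather than to the shifted Legendre index that orthogonality selects; moreover the $+m$ and $-m$ contributions carry the conjugate phases $e^{\pm\bsi m\alpha\pi}$ and must not be merged prematurely. Tracking these phases correctly through the index shifts is where an error is most likely to creep in, but beyond this the computation is identical to the one yielding \eqref{c2} and \eqref{m2}.
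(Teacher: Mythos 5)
Your proposal is correct and matches the paper's own proof: the paper likewise sets \eqref{ss2} to zero, substitutes \eqref{eq:lem54 cond} so the expansion starts at $l=n$, rewrites the angular factors via \eqref{uu} (producing the phases $e^{\pm\bsi m\alpha\pi}$ from evaluation at $\phi=\phi_0$), and then compares the $r^{n-1}$ coefficients along $\boldsymbol{e}_1(\theta,\phi_0)$ and $\boldsymbol{e}_2(\theta,\phi_0)$ using \eqref{ortho3}. Your remark on keeping each phase attached to its original mode index through the Legendre index shifts is exactly the bookkeeping implicit in the paper's derivation of \eqref{c22} and \eqref{m22}.
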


\begin{proof}  Since the generalized impedance condition \eqref{eq:imp2} associated with $\boldsymbol{ \eta}_2$  is imposed on $\widetilde {\Pi}_2$,  substituting \eqref{eq:lem54 cond}  into  \eqref{ss2}, by virtue of \eqref{uu},  we derive that
	  \begin{equation}\label{eq:im bd2n}
      \begin{split}
\mathbf{0}=
    &\sum_{l=n}^{\infty}\sum_{m=-l}^{l} \frac{e^{\bsi m\alpha\cdot\pi}}{\sqrt{l(l+1)}}\Bigg\{\bigg(
   \mathbf{i}ka_l^ml(l+1)p_l(kr)c_l^mP_l^m  +\boldsymbol{ \eta}_2 a_l^mj_l(kr) c_l^m \frac{\sgn(m)}{2} \big[P_{l-1}^{|m|+1}(\cos\theta)\\
   &+(l+|m|-1)(l+|m|)P_{l-1}^{|m|-1}(\cos\theta)\big]-\boldsymbol{ \eta}_2 b_l^mq_l(kr)\frac{c_l^m}{2}\big[(l+|m|)(l-|m|+1)\\
   &\times P_l^{|m|-1}(\cos\theta)-P_l^{|m|+1}(\cos\theta)\big]\bigg)
   \boldsymbol{e_1}(\theta,\phi_0) +\bigg(\mathbf{i}kb_l^mj_l(kr)  c_l^m  \frac{\sgn(m)}{2} \big[P_{l-1}^{|m|+1}(\cos\theta)\\
   &+(l+|m|-1)(l+|m|)P_{l-1}^{|m|-1}(\cos\theta)\big]+\mathbf{i}ka_l^m
    q_l(kr) \frac{c_l^m}{2}\big[(l+|m|)(l-|m|+1)\\
    &\times P_l^{|m|-1}(\cos\theta)-P_l^{|m|+1}(\cos\theta)\big]+\boldsymbol{ \eta}_2 b_l^ml(l+1)p_l(kr)c_l^mP_l^m\bigg)
   \boldsymbol{e_2}(\theta,\phi_0)\Bigg\},
      \end{split}
      \end{equation}
      where $\boldsymbol{e_1}(\theta, 0)$, $\boldsymbol{e_2}(\theta,0)$   $\boldsymbol{e_1}(\theta,\phi_0)$ and $\boldsymbol{e_2}(\theta,\phi_0)$ are defined in \eqref{eq:e1e2}.

Recall that $\boldsymbol{\eta}_\ell$, $\ell=1,2$, have the expansion \eqref{eq:eta1 ex} and \eqref{eq:eta2 ex} respectively, where the coefficients of $r^0$ in \eqref{eq:eta1 ex} and \eqref{eq:eta2 ex}   are non zero number $\eta_1$ and $\eta_2$.         Comparing the coefficient of $r^{n-1}$ both sides of  \eqref{eq:im bd2n} associated with $ \boldsymbol{e}_{1}\left(\theta, \phi\right)$ and  $ \boldsymbol{e}_{2}\left(\theta, \phi\right)$ for $\phi=\phi_0$ respectively,   utilizing the orthogonality condition \eqref{ortho3}, we  can derive \eqref{c22} and \eqref{m22}.
\end{proof}


The next theorem characterises the vanishing order of $\mathbf{E}$ to \eqref{eq:eig} at ${\mathbf 0} \in  {\mathcal E}(\widetilde \Pi_1, \widetilde \Pi_2,\bsl)$ with $\boldsymbol \eta_j\in\mathcal{A}(\bsl)$.

\begin{theorem}\label{th:two imp}
Let $\mathbf{E}$ be a a solution to \eqref{eq:eig}, whose radial wave expansion in $B_{\rho_0}(\mathbf{0}) $ is given by \eqref{mix pi21}.  Consider a generalized impedance  edge-corner ${\mathcal E}(\widetilde{ \Pi}_1, \widetilde{ \Pi}_2,\bsl) \Subset \Omega$ with $\angle(\Pi_{1},\Pi_2)=\phi_0=\alpha \pi$, where $\alpha\in(0,2)$ and $\alpha \neq 1$. Suppose that the generalized impedance parameters $ \boldsymbol{ \eta}_j$ on $\widetilde{ \Pi}_j $, $j=1, 2$, are given by \eqref{eq:eta1 ex} and \eqref{eq:eta2 ex} respectively. Then it holds that $\mathbf{E}$ vanishes up to the order $N$ at $\mathbf{0}$:
		\begin{equation}\notag 
		\mathrm{Vani}(\mathbf{E}; \mathbf{0})\geq \left\{\begin{split}
			&1,\quad \mbox{if }\alpha \neq \frac{1}{2} ,  \\ 
			& N \in\mathbb{N}\backslash\{1\} ,\, \mbox{if }    \alpha \neq \frac{q  }{p}, \, p=1,\ldots, N, \mbox{ and for a fixed } p,   q=1,\ldots, 2p-1. 
					\end{split}\right.
					\end{equation}
\end{theorem}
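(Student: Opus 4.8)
The plan is to use Lemma~\ref{lem:vani} to reduce the statement to the algebraic claim that the coefficients in the expansion \eqref{mix pi21} satisfy $a_l^m=b_l^m=0$ for every $m\in[l]_0$ and every $l=1,\ldots,N$, and then to prove this by strong induction on the order $l$. At an interior level $n$ (with $2\le n\le N$) the inductive hypothesis is precisely \eqref{eq:lem54 cond}, so Lemmas~\ref{lem:54} and \ref{lem:55} supply the full recursions \eqref{c2}, \eqref{m2} coming from $\widetilde\Pi_1$ and \eqref{c22}, \eqref{m22} coming from $\widetilde\Pi_2$, while Lemma~\ref{base2} supplies the anchoring relations for the lowest modes.

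In the inductive step I would first note that for $N\ge 2$ the excluded values automatically contain $\alpha=1/2,3/2$ (the fractions $q/p$ with $p=2$), so $\cos\phi_0\ne 0$, the determinant \eqref{eq:det AB} is nonzero, and Lemma~\ref{base2} yields $a_n^0=b_n^{\pm1}=0$. I would then split the remaining unknowns into the combinations symmetric and antisymmetric under $m\mapsto-m$, namely $A_m=a_n^m+a_n^{-m}$, $\widetilde A_m=a_n^m-a_n^{-m}$ and the analogous $B_m,\widetilde B_m$. The structural point that drives the argument is that the $\widetilde\Pi_1$-relations \eqref{c2}--\eqref{m2}, evaluated at $\phi=0$, involve only the symmetric data $a_n^0,b_n^0,A_m,B_m$, whereas the $\widetilde\Pi_2$-relations \eqref{c22}--\eqref{m22} involve the phase-twisted quantities $\widehat A_m=a_n^m e^{\bsi m\alpha\pi}+a_n^{-m}e^{-\bsi m\alpha\pi}=\cos(m\alpha\pi)A_m+\bsi\sin(m\alpha\pi)\widetilde A_m$ together with the corresponding $\widehat B_m$.

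Since $\eta_1,\eta_2\in\C\backslash\{0\}$ and $k\in\R_+$ make every coefficient occurring in the recursions nonzero, I would propagate \eqref{c2}--\eqref{m2} order by order from the anchor $a_n^0=b_n^{\pm1}=0$ to conclude that all the symmetric data $A_m,B_m$ and $b_n^0$ vanish, and likewise propagate \eqref{c22}--\eqref{m22} to conclude that all $\widehat A_m,\widehat B_m$ vanish, for $m=1,\ldots,n$. Comparing the two outcomes gives $\bsi\sin(m\alpha\pi)\widetilde A_m=\widehat A_m-\cos(m\alpha\pi)A_m=0$ and the analogue for $\widetilde B_m$; here the hypothesis $\alpha\ne q/p$ for $p=1,\ldots,n$ is exactly the statement that $\sin(m\alpha\pi)\ne 0$ for $m=1,\ldots,n$, because $\sin(m\alpha\pi)=0$ precisely when $\alpha=j/m$ with $j\in\{1,\ldots,2m-1\}$. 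Hence $\widetilde A_m=\widetilde B_m=0$, and together with $A_m=B_m=0$ this yields $a_n^{\pm m}=b_n^{\pm m}=0$, closing the induction. The base case $n=1$ must be handled on its own, since the recursions of Lemmas~\ref{lem:54} and \ref{lem:55} degenerate there; for it I would assemble the low-order relations \eqref{eq:lem51 a1}--\eqref{eq:beta 3rd} and \eqref{eq:517a}--\eqref{eq:517c} into one homogeneous system for $a_1^{\pm1},a_1^0,b_1^{\pm1},b_1^0$ and verify that it admits only the trivial solution exactly when $\alpha\ne 1/2$, which records the first alternative of the theorem.

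The main obstacle is the propagation step: one must check that the coupled recursions are genuinely non-degenerate, so that the symmetric data and the twisted quantities $\widehat A_m,\widehat B_m$ are really forced to vanish, and that the sole surviving obstruction to triviality is the scalar $\sin(m\alpha\pi)$. This is delicate because \eqref{c2}--\eqref{m2} couple the $a$- and $b$-coefficients across the neighbouring orders $m-1,m,m+1$, with the coupling splitting according to the parity of $m$, so one has to track the propagation carefully to rule out any accidental vanishing of a recursion coefficient beyond $\sin(m\alpha\pi)=0$. The determinant computation \eqref{eq:det AB} is the prototype of the nonsingularity checks that recur at every order, and I expect this bookkeeping to be the most technical and error-prone part of the argument.
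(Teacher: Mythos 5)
Your overall skeleton (reduction via Lemma~\ref{lem:vani}, induction on the order, an anchoring determinant, and a propagation whose only obstruction should be $\sin(m\alpha\pi)$) is the same as the paper's, but your propagation step has a genuine gap, and it traces back to an incomplete anchor. You anchor only with Lemma~\ref{base2}, i.e.\ $a_n^0=b_n^{\pm1}=0$. From this, forward substitution in \eqref{c2}--\eqref{m2} does resolve one half of the symmetric data: the second equation of \eqref{m2} gives $A_2=0$ (it couples $A_2$, $a_n^0$, $B_1$), then the $m=2$ equation of \eqref{c2} gives $B_3=0$, then $A_4=0$, $B_5=0$, and so on --- but only the even-index $A_m$ and the odd-index $B_m$. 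The complementary unknowns $b_n^0,A_1,B_2,A_3,B_4,\dots$ never occur in an equation containing a single unknown: the first equation of \eqref{m2} couples $A_1$ with $b_n^0$, the second equation of \eqref{c2} couples $A_1,B_2,b_n^0$, the $m=3$ equation of \eqref{m2} couples $A_3,A_1,B_2$, and so on. Hence ``all symmetric data vanish'' is not a propagation at all but the assertion that a genuinely coupled homogeneous linear system, whose coefficients mix $\mathbf{i}k$ and $\eta_1$ in a nontrivial way, is nonsingular; the nonvanishing of the individual coefficients, which is all you invoke, does not deliver this, and nothing guarantees its determinant is controlled by $\sin(m\alpha\pi)$ alone (it could in principle degenerate for special ratios $\eta_1/k$). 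The same problem recurs verbatim for the twisted system \eqref{c22}--\eqref{m22}, so the quantities you want to compare, $A_m$ and $\widehat A_m$, are never both shown to vanish.

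The paper closes precisely this hole with an ingredient your proposal omits: the edge-line relations of Lemma~\ref{lem:imp pi12} (namely \eqref{eq:52a} and \eqref{eq:52b}), obtained by matching the two impedance conditions along $\bsl$ itself and therefore involving \emph{both} $\eta_1$ and $\eta_2$ --- they are not linear combinations of the single-plane recursions. Combined with the first equation of \eqref{m2}, they form the $3\times3$ system \eqref{eq:556 matrix} with determinant \eqref{eq:557 deter} proportional to $\eta_1\sin^2(\alpha\pi)\neq0$, which yields $a_n^{\pm1}=b_n^0=0$; together with Lemma~\ref{base2} this gives the full anchor $a_n^{\pm1}=b_n^{\pm1}=a_n^0=b_n^0=0$. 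With that anchor the propagation is a clean forward substitution, organized differently from yours: at each order $\ell$ one pairs the \emph{same} recursion equation in its untwisted form (from $\widetilde\Pi_1$) and its twisted form (from $\widetilde\Pi_2$), producing the $2\times2$ systems $X_\ell+X_{-\ell}=0$ and $X_\ell e^{\bsi\ell\alpha\pi}+X_{-\ell}e^{-\bsi\ell\alpha\pi}=0$ with determinant $-2\bsi\sin(\ell\alpha\pi)$, so no further nonsingularity check is ever required. To repair your write-up, either import Lemma~\ref{lem:imp pi12} to complete the anchor and switch to this pairing, or prove the invertibility of the coupled complementary system --- the latter is exactly the bookkeeping you flagged as the ``main obstacle'' and is substantially harder than the paper's route.
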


\begin{proof}
	We prove this theorem by induction. Assume that
	\begin{equation}\label{eq:alpha 12}
		\alpha \neq \frac 1 2 \mbox{ and } 		\alpha \neq \frac 3 2 .
	\end{equation}
Since the generalized impedance condition \eqref{eq:imp2} associated with $\boldsymbol{ \eta}_1$  is imposed on $\widetilde {\Pi}_1$, it yields \eqref{eq:im bd1n} when the summation index $n=1$.  Comparing the coefficient of $r^0$  associated with $\boldsymbol{e_2}(\theta,0)$ on both sides of \eqref{eq:im bd1n} for the case that the summation index $n=1$,  from Lemma \ref{base21}, we can obtain that
\begin{equation}\label{eq:543 a1}
0=\mathbf{i}k\frac{4c_1^1}{6\sqrt{2}}(a_1^1+a_1^{-1})+\eta_1\frac{\sqrt{2}c_1^0}{3}b_1^0.
\end{equation}
Combine \eqref{eq:543 a1} with  \eqref{eq:lem51 a1} and \eqref{eq:lem51 a2} from Lemma \ref{base2},  we derive that
\begin{align}\label{eq:543 matrix}
{\mathcal A}_1  \begin{bmatrix}
	a_1^1+a_1^{-1} \\ a_1^1-a_1^{-1} \\ b_1^0
\end{bmatrix} &=\mathbf{0},\quad {\mathcal A}_1 =\left(\alpha_{ij}^1\right)_{i,j=1}^3,
\end{align}
where
\begin{equation}\notag
\begin{split}
&\alpha_{11}=\frac{4 \mathbf{i}kc_1^1\sin^2\phi_0}{6\sqrt{2}},\quad\alpha_{12}=-\frac{4kc_1^1\sin\phi_0\cos\phi_0}{6\sqrt{2}},\quad\alpha_{13}=\frac{(-\eta_2\cos\phi_0-\eta_1)\sqrt{2}c_1^0}{3}\\
&\alpha_{21}=-\frac{4\mathbf{i}kc_1^1\sin\phi_0\cos\phi_0}{6\sqrt{2}},\quad\alpha_{22}=-\frac{4kc_1^1\sin^2\phi_0}{6\sqrt{2}},\quad
\alpha_{23}=-\frac{\eta_2\sqrt{2} c_1^0\sin\phi_0}{3}\\
&\alpha_{31}=\frac{4\mathbf{i}kc_1^1}{6\sqrt{2}},\quad\alpha_{32}=0,\quad \alpha_{33}=\frac{\sqrt{2}c_1^0 \eta_1}{3}.
\end{split}
\end{equation}
By direct calculations, it yields that
\[
 \left|
    {\mathcal A}_1
    \right|
    =-\mathbf{i}k^2\eta_1\left(\frac{2}{3}\right)^3\frac{\sqrt{2}}{2}\big(c_1^1\big)^2c_1^0\sin^2(\alpha \pi).
\]
Hence under the assumption \eqref{eq:alpha 12} and $\eta_1\neq 0$, by virtue of \eqref{eq:543 matrix} and $k\in \mathbb R_+$, it can be derived that $a_1^{\pm 1}=b_1^0=0$. Recall that  \eqref{eq:lem51 1} is given by Lemma \ref{base2}. In view of \eqref{eq:alpha 12}, $\alpha \in (0,1)$, $k\in \mathbb R_+$ and $\eta_2\neq 0$, using the fact that
\[
 \left|{\mathcal B}_1
\right| =-k\eta_2^2\left(\frac{2}{3}\right)^3\frac{\sqrt{2}}{2}\big(c_1^1\big)^2c_1^0\sin^2 (\alpha \pi)  \cos^2(\alpha \pi )\neq 0,
\]
where ${\mathcal B}_1$ is defined in \eqref{eq:lem51 1}, we can obtain that  $b_1^{\pm 1}=a_1^0=0$. Therefore, from Lemma \ref{lem:vani}, we prove that $\mathrm{Vani}(\mathbf{E}; \mathbf{0}) \geq 1$ under conditions \eqref{eq:alpha 12} and $\eta_\ell \neq 0$, $\ell=1,2$.

	Suppose that $N=2$, from the assumption in this theorem  we know that \eqref{eq:alpha 12} still holds. Since $a_1^{\pm 1}=b_{1}^{\pm 1} =a_1^0=b_1^0=0$, from Lemmas \ref{lem:54} and \ref{lem:55} we know that \eqref{c2}, \eqref{m2}, \eqref{c22} and \eqref{m22} for $n=2$ hold. Therefore we have
 \begin{equation}\label{12equations1}
   \left\{
   \begin{split}
   0=&\frac{\sqrt{6}c_2^0\mathbf{i}k}{5}a_2^0-\frac{18c_2^1\eta_1 }{10\sqrt{6} }(b_2^1+b_2^{-1}),
   \\
   0=&\frac{\sqrt{6}c_2^1\mathbf{i}k}{5}(a_2^1+a_2^{-1})-\frac{12c_2^2\eta_1}{10\sqrt{6}}(b_2^2+b_2^{-2})+ \frac{3c_2^0\eta_1  }{5\sqrt{6}}b_2^0,
    \\
   0=&\mathbf{i}k\frac{\sqrt{6}}{5}c_2^2(a_2^2+a_2^{-2})+\frac{3c_2^{1}\eta_1}{10\sqrt{6} }(b_2^{1}+b_2^{-1}),
\end{split}
\right. 	
\end{equation}
and
\begin{equation}\label{12equations2}
\left\{
\begin{split}
0=&\frac{18c_2^1\mathbf{i}k}{10\sqrt{6}}(a_2^1+a_2^{-1})+ \frac{\sqrt{6}c_2^0\eta_1}{5}b_2^0,\\
0=&\frac{12c_2^2\mathbf{i}k}{10\sqrt{6} }(a_2^2+a_2^{-2})- \frac{3c_2^0\mathbf{i}k}{5\sqrt{6}}a_2^0+\frac{\sqrt{6}c_2^1\eta_1}{5}(b_2^1+b_2^{-1}), \\
0=&-\frac{3c_2^{1}\mathbf{i}k}{10\sqrt{6}}(a_2^{1}+a_2^{-1})+\frac{\sqrt{6}c_2^2\eta_1}{5}(b_2^2+b_2^{-2})=0.
\end{split}
\right.
\end{equation}
Furthermore, it holds that
\begin{equation} \label{12equations3} 	
\left\{
\begin{split}
   0=&\frac{\sqrt{6}c_2^0\mathbf{i}k}{5}a_2^0-\frac{18c_2^1\eta_2 }{10\sqrt{6} }(b_2^1e^{\bsi \alpha\cdot\pi} +b_2^{-1} e^{-\bsi \alpha\cdot\pi}), 
    \\
   0=&\frac{\sqrt{6}c_2^1\mathbf{i}k}{5}(a_2^1 e^{\bsi \alpha\cdot\pi}+a_2^{-1}e^{-\bsi \alpha\cdot\pi})-\frac{12c_2^2\eta_2}{10\sqrt{6}} (b_2^2 e^{\bsi 2\alpha\cdot\pi}+b_2^{-2}e^{-\bsi 2\alpha\cdot\pi})+ \frac{3c_2^0\eta_2  }{5\sqrt{6}}b_2^0,  
\\
0=&\frac{\sqrt{6}c_2^2\mathbf{i}k}{5}(a_2^2 e^{\bsi 2\alpha\cdot\pi}+a_2^{-2}e^{-\bsi 2 \alpha\cdot\pi} )+\frac{3c_2^{1}\eta_2}{10\sqrt{6} }  (b_2^{1}e^{\bsi \alpha \pi}+b_2^{-1}e^{-\bsi \alpha \pi}),
\end{split}
\right.
\end{equation}
 \begin{equation} \label{12equations4}
 \left\{
\begin{split}
0=&\frac{18c_2^1\mathbf{i}k}{10\sqrt{6}}(a_2^1 e^{\bsi\alpha \cdot \pi}+a_2^{-1} e^{-\bsi \alpha \cdot  \pi})+ \frac{\sqrt{6}c_2^0\eta_2}{5}b_2^0,\\
0=&\frac{12c_2^2\mathbf{i}k}{10\sqrt{6} }(a_2^2 e^{2 \bsi\alpha \cdot \pi}+a_2^{-2} e^{-2 \bsi\alpha \cdot \pi} )- \frac{3c_2^0\mathbf{i}k}{5\sqrt{6}}a_2^0 +\frac{ \sqrt{6}c_2^1\eta_2}{5}(b_2^1 e^{\bsi\alpha \cdot \pi} +b_2^{-1} e^{-\bsi\alpha \cdot \pi}), \\
0=&-\frac{3c_2^{1}\mathbf{i}k}{10\sqrt{6}} (a_2^{1} e^{\bsi \alpha \cdot \pi} +a_2^{-1} e^{-\bsi  \alpha \cdot \pi} )+\frac{\sqrt{6}c_2^2\eta_2 }{5}(b_2^2 e^{\bsi 2 \alpha \cdot \pi}+b_2^{-2} e^{-\bsi 2 \alpha \cdot \pi} )=0.
\end{split}
\right.
\end{equation}

From Lemma \ref{lem:imp pi12},  \eqref{eq:52a} and \eqref{eq:52b} for $n=2$ can be written as
 \begin{align}\label{eq:545 n=2}
 \begin{split}
 0&=\frac{18c_2^1\sin^2\phi_0\mathbf{i}k}{10\sqrt{6}}(a_2^1+a_2^{-1})-\frac{18c_2^1\sin\phi_0\cos\phi_0k}{10\sqrt{6}}(a_2^1-a_2^{-1})+\frac{\sqrt{6}c_2^0(-\eta_2\cos\phi_0-\eta_1)}{5} b_2^0, \\
0&=-\frac{18c_2^1\sin\phi_0\cos\phi_0\mathbf{i}k}{10\sqrt{6}}
(a_2^1+a_2^{-1})-\frac{18c_2^1\sin^2\phi_0k}{10\sqrt{6}}(a_2^1-a_2^{-1})-\frac{\eta_2\sqrt{6} c_2^0\sin\phi_0}{5}b_2^0.
 \end{split}
\end{align}
Combing the first equation of \eqref{12equations2} with \eqref{eq:545 n=2}, we have
	\begin{equation}\label{eq:546 linear}
		{\mathcal A}_2 \begin{bmatrix}
	a_2^1+a_2^{-1} \\ a_2^1-a_2^{-1} \\ b_2^0
\end{bmatrix} =\mathbf{0},\quad {\mathcal A}_2 =\left(\alpha_{ij}^2\right)_{i,j=1}^3,
	\end{equation}
	where
	\begin{equation}\notag
\begin{split}
&\alpha_{11}^2=\frac{18c_2^1\sin^2\phi_0\mathbf{i}k}{10\sqrt{6}},\quad\alpha_{12}^2=-\frac{18c_2^1\sin\phi_0\cos\phi_0k}{10\sqrt{6}},\quad\alpha_{13}^2=\frac{\sqrt{6}c_2^0(-\eta_2\cos\phi_0-\eta_1)}{5}\\
&\alpha_{21}^2=-\frac{18c_2^1\sin\phi_0\cos\phi_0\mathbf{i}k}{10\sqrt{6}},\quad\alpha_{22}^2=-\frac{18c_2^1\sin^2\phi_0k}{10\sqrt{6}},\quad
\alpha_{23}^2=-\frac{\eta_2\sqrt{6} c_2^0\sin\phi_0}{5}\\
&\alpha_{31}^2=\frac{18c_2^1\mathbf{i}k}{10\sqrt{6}},\quad\alpha_{32}^2=0,\quad \alpha_{33}^2=\frac{\sqrt{6}c_2^0\eta_1}{5}.
\end{split}
\end{equation}
It can be computed directly that
\begin{equation}
\begin{split}
 \left|{\mathcal A}_2
\right| &=-\mathbf{i}k^2\eta_1\left(\frac{3}{5}\right)^3\frac{2\sqrt{6}}{2}\big(c_2^1\big)^2c_2^0\sin^2(\alpha \pi).
\end{split}
\end{equation}
Since  $\alpha \in (0,2)$, $\alpha\neq 1$, $\eta_1\neq 0$ and $k\in \mathbb R_+$, in view of \eqref{eq:546 linear}, we prove that $a_2^{\pm1}=b_2^0=0$.

Recall that $\mathbf{E}$  has the radial wave expansion \eqref{mix pi21}  at  $\mathbf{0}$.   	Due to that $\eta_1 \neq 0$, under the assumption  \eqref{eq:alpha 12}, by virtue of \eqref{eq:lem41 zero} in Lemma \ref{base2}, we have
	 \begin{equation}\label{eq:a1b10 new}
	a_1^0=b_1^{\pm1}=0.
\end{equation}

  By mathematical induction, if $\alpha \neq \frac{q  }{p}$, where $p=1,\ldots, n-1$ and for a fixed $p$, $ q=1,2,\ldots, 2p-1$, then
      $$
      \mathrm{Vani}(\mathbf{E}; \mathbf{0})\geq n-1.
      $$
      From Lemma \ref{lem:vani}, we know that
      \begin{equation}\label{eq:alm=0}
      	a_l^m=b_l^m=0,\quad m\in [l]_0, \quad l=1,2,\ldots,n.
      \end{equation}
 Therefore we know that \eqref{c2}, \eqref{m2}, \eqref{c22} and \eqref{m22} hold from Lemmas \ref{lem:54} and \ref{lem:55}.  In the following under the assumption
 \begin{equation}\label{eq:455 cond}
 \eta_\ell \neq 0 \mbox{ for } \ell=1, 2 \mbox{ and }	\alpha \neq \frac{q  }{p}, \, p=1,\ldots, n,
 \end{equation}
 where  for a fixed $p$, $ q=1,2,\ldots, 2p-1$,  we shall show that
 \begin{equation}\label{eq:4567 anm}
 	a_n^m=b_n^m=0,\quad \forall m \in [n]_0
 \end{equation}
 by utilizing the recursive equations of \eqref{c2}, \eqref{m2}, \eqref{c22} and \eqref{m22}. Indeed, combing the first equation of \eqref{m2} with \eqref{eq:lem51 a1} and \eqref{eq:lem51 a2}, we have
 \begin{equation}\label{eq:556 matrix}
 	{\mathcal A}_n  \begin{bmatrix}
	a_n^1+a_n^{-1} \\ a_n^1-a_n^{-1} \\ b_n^0
\end{bmatrix} =\mathbf{0},\quad {\mathcal A}_n =\left(\alpha_{ij}^n\right)_{i,j=1}^3,
 \end{equation}
 where
 \begin{equation}\notag
\begin{split}
&\alpha^n_{11}=\frac{\mathbf{i}kn(n+1)^2c_n^1\sin^2\phi_0}{2(2n+1)\sqrt{n(n+1)}},\quad \alpha^n_{12}=-\frac{kn(n+1)^2c_n^1\sin\phi_0\cos\phi_0}{2(2n+1)\sqrt{n(n+1)}},\\
&\alpha^n_{13}=\frac{(-\eta_2\cos\phi_0-\eta_1)\sqrt{n(n+1)}c_n^0}{2n+1},\quad \alpha^n_{21}=-\frac{\mathbf{i}kn(n+1)^2c_n^1\sin\phi_0\cos\phi_0}{2(2n+1)\sqrt{n(n+1)}},\\
&\alpha^n_{22}=-\frac{kn(n+1)^2c_n^1\sin^2\phi_0}{2(2n+1)\sqrt{n(n+1)}},\quad
\alpha^n_{23}=-\frac{\eta_2\sqrt{n(n+1)} c_n^0\sin\phi_0}{2n+1}, \\
&\alpha^n_{31}=\mathbf{i}k\frac{n(n+1)^2c_n^1}{2(2n+1)\sqrt{n(n+1)}},\quad\alpha^n_{32}=0,\quad \alpha^n_{33}=\eta_1\frac{\sqrt{n(n+1)}c_n^0}{2n+1}.
\end{split}
\end{equation}
 It can be derived that
 \begin{equation}\label{eq:557 deter}
  \left|{\mathcal A}_n
\right| =-\mathbf{i}k^2\eta_1\left(\frac{n+1}{2n+1}\right)^3\frac{n\sqrt{n(n+1)}}{2}\big(c_n^1\big)^2c_n^0\sin^2(\alpha \pi) .
 \end{equation}
Since $\alpha\in (0,2)$, $\alpha \neq 1$,  $\alpha \neq \frac{1}{2}$, $\alpha \neq \frac{3}{2}$ and $\eta_\ell \neq 0$, $\ell=1,2$, by virtue of \eqref{eq:556 matrix}, \eqref{eq:557 deter} and Lemma \ref{base2}, we have
\begin{equation}\label{eq:462 an1bn1+}
	a_n^{\pm1}=a_n^{0}=b_n^{\pm1}=b_n^{0}=0.
\end{equation}

Substituting \eqref{eq:462 an1bn1+} into the second equation of \eqref{c2}, \eqref{m2}, \eqref{c22} and \eqref{m22}, since $k\in \mathbb R_+$,  $\eta_\ell \neq 0$ for $\ell=1,2$ and $c_n^2\neq 0$, we obtain that
   \begin{equation}\notag
   	\left\{\begin{array}{l}
   		a_n^2 +a_n^{-2}=0,\\
   		a_n^2 e^{2 \bsi\alpha \cdot \pi}+a_n^{-2} e^{-2 \bsi\alpha \cdot \pi}=0,
   	\end{array}  \right. \quad
   	   	\left\{\begin{array}{l}
   		b_n^2 +b_n^{-2}=0,\\
   		b_n^2 e^{2 \bsi\alpha \cdot \pi}+b_n^{-2} e^{-2 \bsi\alpha \cdot \pi}=0,
   	\end{array}  \right.
   \end{equation}
 which can be shown to prove that $a_n^{\pm 2} = b_n^{\pm 2}=0$, since
\[
 \left|\begin{array}{cc}
    1 &  1 \\
    e^{\bsi 2\alpha\cdot\pi} & e^{-\bsi 2\alpha\cdot\pi}
    \end{array}\right|
    =-2\bsi\sin (2\alpha\pi)\neq0,
\]
under \eqref{eq:455 cond}.  Substituting
$$
a_n^{\pm 1} = b_n^{\pm 1}=a_n^{\pm 2} = b_n^{\pm 2}=0
$$
into the third equation of \eqref{c2}, \eqref{m2}, \eqref{c22} and \eqref{m22}, since $k\in \mathbb R_+$,   $\eta_\ell \neq 0$ for $\ell=1,2$ and $c_n^3 \neq 0$, we get that
   \begin{equation}\notag
   	\left\{\begin{array}{l}
   		a_n^3 +a_n^{-3}=0,\\
   		a_n^3 e^{3 \bsi\alpha \cdot \pi}+a_n^{-3} e^{-3 \bsi\alpha \cdot \pi}=0,
   	\end{array}  \right. \quad
   	   	\left\{\begin{array}{l}
   		b_n^3 +b_n^{-3}=0,\\
   		b_n^3 e^{3 \bsi\alpha \cdot \pi}+b_n^{-3} e^{-3 \bsi\alpha \cdot \pi}=0,
   	\end{array}  \right.
   \end{equation}
 which can be shown to prove that $a_n^{\pm 3} = b_n^{\pm 3}=0$, since
\[
 \left|\begin{array}{cc}
    1 &  1 \\
    e^{\bsi 3\alpha\cdot\pi} & e^{-\bsi 3\alpha\cdot\pi}
    \end{array}\right|
    =-2\bsi\sin (3\alpha\pi)\neq0,
\]
under \eqref{eq:455 cond}. Repeating the above procedures step by step, utilizing the recursive property   of \eqref{c2}, \eqref{m2}, \eqref{c22} and \eqref{m22}, we can prove \eqref{eq:4567 anm}. Generally, assume that we have proved that
\begin{equation}\notag
	a_n^{\pm m}=b_n^{\pm m}=0 \mbox{ for }m=0,1,\ldots, \ell.
\end{equation}
Substituting $a_n^{\pm (\ell-1)}=b_n^{\pm (\ell-2)}=0$ into the $\ell$-th equation of \eqref{c2} and \eqref{c22}, we can obtain that
\begin{equation}\label{eq:560}
	\left\{\begin{array}{l}
   		b_n^\ell +b_n^{-\ell}=0,\\
   		b_n^\ell e^{\bsi \ell \alpha \cdot \pi}+b_n^{-\ell} e^{-\bsi\ell \alpha \cdot \pi}=0,
   	\end{array}  \right.
\end{equation}
under the assumption $\eta_1\neq 0$ and $\eta_2\neq 0$. Substituting $a_n^{\pm (\ell-2)}=b_n^{\pm (\ell-1)}=0$ into the $\ell$-th equation of \eqref{m2} and \eqref{m22}, we can get that
\begin{equation}\label{eq:561}
	\left\{\begin{array}{l}
   		a_n^\ell  +a_n^{-\ell }=0,\\
   		a_n^\ell  e^{ \bsi \ell \alpha \cdot \pi}+a_n^{-2} e^{- \bsi \ell \alpha \cdot \pi}=0,
   	\end{array}  \right.
\end{equation}
Hence  from \eqref{eq:560} and \eqref{eq:561},  under \eqref{eq:455 cond} it yields that $a_n^{\pm \ell}=b_n^{\pm \ell}=0 $.

Therefore,  due to \eqref{eq:4567 anm}, by virtue of  Lemma \ref{lem:vani}, 
 we prove that
  $$
      \mathrm{Vani}(\mathbf{E}; \mathbf{0})\geq n,
      $$
      which completes the proof of this theorem.
\end{proof}

\section{Vanishing orders for an edge-corner ${\mathcal E} ( \widetilde{ \Pi}_1,  \widetilde{ \Pi}_2,\bsl)$ with $\boldsymbol{\eta}_j\in \mathcal{A}(\bsl)$ or $\boldsymbol{\eta}_j=0, \infty$}\label{sec:6}

In this section, we investigate the vanishing order of the solution $\mathbf{E}$ to \eqref{eq:eig} at an edge-corner point $\mathbf 0 \in {\mathcal E}(\widetilde \Pi_1,  \widetilde  \Pi_2,\bsl)$, where the generalized impedance edge-corner ${\mathcal E}(\widetilde \Pi_1,  \widetilde  \Pi_2,\bsl) \Subset \Omega$ with $\angle(\Pi_{1},\Pi_2)=\phi_0=\alpha \pi$, $\alpha\in(0,2)$ and $\alpha \neq 1$. The generalized impedance condition \eqref{eq:imp2} on $\widetilde \Pi_j $, $j=1,2$, are different. Namely, the  associated generalized impedance parameter of the generalized impedance  edge-corner ${\mathcal E}(\Pi_1, \Pi_2,\bsl)$ in Theorem \ref{thm:pec pmc} are $\boldsymbol{\eta}_1\equiv \infty$ and $\boldsymbol{\eta}_2\equiv 0$, where  we utilize Lemma \ref{lem:31} to reveal the vanishing order of $\mathbf{E}$  at $\mathbf 0$. On the other hand, in Theorems \ref{thm:imp pec} and \ref{thm:imp pmc}, we consider the case that $\boldsymbol{\eta}_2\in {\mathcal A}(\bsl)$ has the expansion \eqref{eq:eta2 ex} whereas the  associated generalized impedance parameter $\boldsymbol{\eta}_1$ could be either $\infty$ or $0$. The reflection principle \cite{Liu3,Liu09} are adopted to transform the corresponding generalized impedance edge-corner to be generalized impedance edge-corner intersected by two plane cells with the generalized impedance condition \eqref{eq:imp2} and two  associated generalized impedance parameters belonging to $\mathcal{A}(\bsl)$.

\begin{lemma}\label{lem:31}
Let $\mathbf{E}$ be a a solution to \eqref{eq:eig}, whose radial wave expansion in $B_{\rho_0}(\mathbf{0}) $ is given by \eqref{mix pi21}.  Consider a generalized impedance  edge-corner ${\mathcal E}(\widetilde{ \Pi}_1, \widetilde{ \Pi}_2,\bsl) \Subset \Omega$ with $\angle(\Pi_{1},\Pi_2)=\phi_0=\alpha \pi$, where $\alpha\in(0,2)$ and $\alpha \neq 1$. Suppose that the generalize impedance parameters $ \boldsymbol{ \eta}_1$ on $\widetilde{ \Pi}_1 $ satisfies (ii) in \eqref{eq:imp1} and  $ \boldsymbol{ \eta}_2$ on $\widetilde{ \Pi}_2 $ satisfies (i) in \eqref{eq:imp1}  respectively. It holds that
 \begin{subequations}
     \begin{align}
   & b_1^1+b_1^{-1}=0, \quad b_1^0=0, \label{eq:b1b10 pec}\\
    &a_1^1-a_1^{-1}=0,\label{eq:d1 pec}\\
    & b_{2}^m+b_{2}^{-m}=0,\quad m=1, 2,\mbox{ and } b_{2}^0=0, \label{eq:39 b1b10 pec}
       \end{align}
    \end{subequations}
    and
     \begin{subequations}
 \begin{align}
    & 
    a_1^1e^{\bsi\alpha\cdot\pi}+a_1^{-1}e^{-\bsi\alpha\cdot\pi}=0, \quad a_1^0=0, \label{eq:pmc 49} \\
   & 
    b_1^1e^{\bsi\alpha\cdot\pi}-b_1^{-1}e^{-\bsi\alpha\cdot\pi}=0,  \label{d1}\\
    &   a_{2}^me^{\bsi m\alpha\cdot\pi}+a_{2}^{-m}e^{-\bsi m\alpha\cdot\pi}=0,\quad m=1, 2,\mbox{ and } a_{2}^0=0. \label{eq:41c}
    \end{align}
    \end{subequations}
    Assume that there exits a $n\in \mathbb N$ such that
\begin{equation}\label{eq:lem31 cond}
		a_l^m=b_l^m=0,\quad l=1,2,\ldots,n-1,\quad m\in [l]_0,
	\end{equation}
then we have
\begin{subequations}
\begin{align}
&b_n^m+b_n^{-m}=0, \quad m=1,\ldots, n, \mbox{ and } b_n^0=0,\label{eq:lem31 33} \\
& a_n^me^{\bsi m\alpha\cdot\pi}+a_n^{-m}e^{-\bsi m\alpha\cdot\pi}=0, \quad m=1,\ldots, n, \mbox{ and } a_n^0=0, \label{eq:lem41 412}
\end{align}
\end{subequations}
and
\begin{subequations}
\begin{align}
& \sum_{m=1}^{n}mc_n^m(a_n^m-a_n^{-m})\frac{P_n^m(\cos\theta)}{\sin \theta}+\sum_{m=-(n+1)}^{n+1}\frac{c_{n+1}^m(n+2)}{2n+3}b_{n+1}^m\frac{\partial Y_{n+1}^m}{\partial\theta}\Big|_{\phi=0}=0, \label{eq:lem31 34} \\
&\sum_{m=1}^{n}mc_n^m(b_n^me^{\bsi m\alpha\cdot\pi}-b_n^{-m}e^{-\bsi m\alpha\cdot\pi})\frac{P_n^m(\cos\theta)}{\sin \theta}+\sum_{m=-(n+1)}^{n+1}\frac{c_{n+1}^m(n+2)}{2n+3}a_{n+1}^m\frac{\partial Y_{n+1}^m}{\partial\theta}\Big|_{\phi=\phi_0}=0, \label{eq:lem41 43} 
\end{align}
\end{subequations}
where $c_n^m$ are nonzero constants defined in \eqref{sphe harmonic} for m $=0,1,\ldots,n$. Furthermore, we have
\begin{subequations}
\begin{align}
&b_{n+1}^m+b_{n+1}^{-m}=0,\quad m=1,\ldots, n+1,\mbox{ and } b_{n+1}^0=0, \label{eq:bm1} \\
&  a_{n+1}^me^{\bsi m\alpha\cdot\pi}+a_{n+1}^{-m}e^{-\bsi m\alpha\cdot\pi}=0,\quad m=1,\ldots, n+1,\mbox{ and } a_{n+1}^0=0,\label{eq:b7}
\end{align}
\end{subequations}
where $c_{n+1}^m$ are nonzero constants defined in \eqref{sphe harmonic} for m$ =0,1,\ldots,n+1 $.
\end{lemma}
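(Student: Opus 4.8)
The plan is to convert each of the two boundary conditions into a pair of scalar power-series identities in $r$ and then read off the stated relations by matching powers of $r$ and invoking the orthogonality of the associated Legendre functions. Since $\boldsymbol{\eta}_1\equiv\infty$, condition \eqref{eq:imp2} on $\widetilde\Pi_1$ reduces to \eqref{eq:imp3}, which is equivalent to $\nu_1\wedge\mathbf E|_{\widetilde\Pi_1}=\mathbf 0$; since $\boldsymbol{\eta}_2\equiv 0$, the condition on $\widetilde\Pi_2$ becomes $\nu_2\wedge(\nabla\wedge\mathbf E)|_{\widetilde\Pi_2}=\mathbf 0$. I would substitute the expansions \eqref{mix pi2} and \eqref{gg} of Lemma~\ref{eq:e1e21} into these two identities. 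Since $\boldsymbol{e}_1(\theta,\phi)$ and $\boldsymbol{e}_2(\theta,\phi)$ are linearly independent, each vector identity splits into a scalar $\boldsymbol{e}_1$-identity (carrying only the terms $b_l^m p_l$ on the PEC face, and only $a_l^m p_l$ on the PMC face) and a scalar $\boldsymbol{e}_2$-identity (mixing $a_l^m j_l$ with $b_l^m q_l$). On $\widetilde\Pi_1$ one has $\phi=0$, so $Y_l^m|_{\phi=0}=c_l^{|m|}P_l^{|m|}(\cos\theta)$; on $\widetilde\Pi_2$ one has $\phi=\phi_0=\alpha\pi$, which inserts the phase $e^{\bsi m\alpha\pi}$ and thereby produces the factors $e^{\pm\bsi m\alpha\pi}$ appearing in \eqref{eq:lem41 412}, \eqref{eq:lem41 43} and \eqref{eq:b7}.

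Each scalar identity is an absolutely convergent power series in $r$ vanishing on an interval, so by Lemma~\ref{lem:coeff0} (equivalently, by matching like powers of $r$) every coefficient of $r^s$, regarded as a function of $\theta$, must vanish. The crux of the proof is the order-by-order radial bookkeeping. By \eqref{eq:bess sph}, \eqref{eq:plql} and Remark~\ref{i2}, $p_l$ and $q_l$ contain only the powers $r^{l-1},r^{l+1},\dots$ and $j_l$ only $r^{l},r^{l+2},\dots$; in particular $p_n$ and $q_n$ carry no $r^{n}$ term. Under the hypothesis \eqref{eq:lem31 cond} all coefficients with index $l\le n-1$ vanish, so each sum starts at $l=n$ and the potential $r^{n}$ contributions of $p_{n-1},q_{n-1},j_{n-2}$ are suppressed. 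Hence the $\boldsymbol{e}_1$-identities see, at order $r^{n-1}$, only the degree-$n$ term $b_n^m p_n$ (PEC) and $a_n^m p_n$ (PMC): collecting $\pm m$ gives the symmetric combinations, and the orthogonality \eqref{ortho3} then yields \eqref{eq:lem31 33} and \eqref{eq:lem41 412}. At the next order $r^{n}$ these same identities see only the degree-$(n+1)$ term $b_{n+1}^m p_{n+1}$ and $a_{n+1}^m p_{n+1}$ (the $p_{n-1}$ terms being killed by \eqref{eq:lem31 cond}), and \eqref{ortho3} delivers exactly \eqref{eq:bm1} and \eqref{eq:b7}.

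For the $\boldsymbol{e}_2$-identities I would match coefficients of $r^{n}$: the only survivors are $a_n^m j_n$ and $b_{n+1}^m q_{n+1}$ on the PEC face, and $b_n^m j_n$ and $a_{n+1}^m q_{n+1}$ on the PMC face, since $q_n$ skips the power $r^{n}$ and the lower-index terms vanish by \eqref{eq:lem31 cond}. Rewriting $\tfrac{m}{\sin\theta}Y_n^m$ and $\partial_\theta Y_{n+1}^m$ through the Legendre recursions \eqref{uu} (and disposing of the index $m=0$ via the reflection formula \eqref{eq:pnm neg}), the $j_n$ part contributes the antisymmetric combinations $m\,c_n^{m}(a_n^m-a_n^{-m})P_n^m/\sin\theta$, while the $q_{n+1}$ part contributes $\partial_\theta Y_{n+1}^m$; this is precisely \eqref{eq:lem31 34}, and the PMC face gives \eqref{eq:lem41 43} identically. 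The base cases $n=1,2$ follow the same template applied to the lowest-order terms: for instance \eqref{eq:b1b10 pec}, \eqref{eq:39 b1b10 pec}, \eqref{eq:pmc 49} and \eqref{eq:41c} come from the $\boldsymbol{e}_1$-identities at orders $r^0$ and $r^1$, while the degree-$0$ part of the $\boldsymbol{e}_2$-identities collapses to a single relation and yields \eqref{eq:d1 pec} and \eqref{d1}. The main obstacle is precisely this radial bookkeeping: one must check that the parity of the Bessel expansions in \eqref{eq:bess sph} together with the induction hypothesis \eqref{eq:lem31 cond} leaves exactly the harmonics $P_n^{|m|}$ at order $r^{n-1}$ and $P_{n+1}^{|m|}$ at order $r^{n}$, so that \eqref{ortho3} isolates each symmetric or antisymmetric coefficient combination without interference from neighbouring degrees.
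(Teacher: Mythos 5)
Your proposal is correct and follows essentially the same route as the paper's own proof: reduce the two boundary conditions to $\nu_1\wedge\mathbf{E}|_{\widetilde\Pi_1}=\mathbf{0}$ and $\nu_2\wedge(\nabla\wedge\mathbf{E})|_{\widetilde\Pi_2}=\mathbf{0}$, substitute the expansions \eqref{mix pi2} and \eqref{gg}, split via the linear independence of $\boldsymbol{e}_1,\boldsymbol{e}_2$, match the powers $r^{n-1}$ and $r^{n}$ using the parity of the spherical Bessel expansions (so that $p_n,q_n$ skip $r^n$), and finish with the Legendre orthogonality \eqref{ortho3} and Lemma~\ref{lem:coeff0}. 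The only slip is cosmetic: \eqref{eq:d1 pec} and \eqref{d1} cannot come from the $r^0$ coefficient of the $\boldsymbol{e}_2$-identities, since $a_1^{\pm1}$ (resp.\ $b_1^{\pm1}$) enter those identities only through $j_1$, which starts at order $r^1$; exactly as in the paper, they follow from the $r^1$ coefficient after substituting the relations \eqref{eq:39 b1b10 pec} (resp.\ \eqref{eq:41c}), i.e.\ your general template applied at $n=1$.
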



\begin{proof}
We first derive \eqref{eq:b1b10 pec},  \eqref{eq:d1 pec} and \eqref{eq:39 b1b10 pec}.  Since the generalized impedance condition \eqref{eq:imp2} associated with $\boldsymbol{ \eta}_1$  is imposed on $\widetilde {\Pi}_1$ where $\boldsymbol{\eta}_1\equiv \infty $, using \eqref{mix pi2}, we have
\begin{equation}\label{eq:38 pec}
\begin{split}
&		\mathbf{0}=  \sum_{l=1}^{\infty}\sum_{m=-l}^{l}-\frac{1}{\sqrt{l(l+1)}}\Bigg\{b_l^ml(l+1)p_l(kr)Y_l^m\Big|_{\phi=0}
  \boldsymbol{e_1}(\theta,0)\\
    & \hspace{2cm} +\bigg(
    a_l^mj_l\big(kr\big)\frac{m}{\sin\theta}Y_l^m\Big|_{\phi=0}+b_l^m\cdot q_l(kr)
   \frac{\partial{Y_l^m}}{\partial\theta}\Big|_{\phi=0}
    \bigg)\boldsymbol{e_2}(\theta,0)\Bigg\},
  \end{split}
	\end{equation}
	where $ \boldsymbol{e}_{1}\left(\theta, 0\right)$ and $   \boldsymbol{e}_{2}\left(\theta, 0\right)$ are defined in \eqref{eq:e1e2}.  From Remark \ref{i2}, the lowest order of \eqref{eq:38 pec} with respect to the power of $r$ is $r^0$, which is contributed by $p_1(kr)$ and $q_1(kr)$. Similarly, the second lowest order of \eqref{eq:38 pec} with respect to the power of $r$ is $r^1$, which is contributed by $j_1(kr)$, $p_2(kr)$ and $q_2(kr)$. Comparing the coefficient of $r^0$ and $r^1$ associated with  $ \boldsymbol{e}_{1}\left(\theta, 0 \right)$   on both sides of \eqref{eq:38 pec}, utilizing the orthogonality property  \eqref{ortho3}, we can obtain 	\eqref{eq:b1b10 pec} and \eqref{eq:39 b1b10 pec}. 
Substituting \eqref{eq:39 b1b10 pec} into  \eqref{eq:38 pec}, comparing the coefficient of  $r^1$ in the resulting equation \eqref{eq:38 pec}   associate with $ \boldsymbol{e}_{2}\left(\theta, 0\right)$, using Lemma \ref{lem:coeff0}, we deduce that
    \begin{equation}\label{ii1}
    \begin{split}
    &(a_1^1c_1^1-a_1^{-1}c_1^{-1})P_1^1(\cos\theta)=0,
     \end{split}
     \end{equation}
    where  $c_1^{\pm 1}$ are nonzero constants  defined in \eqref{sphe harmonic}.
In view of \eqref{ii1}, from \eqref{ortho3} and $c_1^1=c_1^{-1}\neq 0$, it yields that \eqref{eq:d1 pec}.

Since the generalized impedance condition \eqref{eq:imp2} associated with $\boldsymbol{ \eta}_2$  is imposed on $\widetilde {\Pi}_2$ where $\boldsymbol{\eta}_2\equiv 0 $,  by virtue of  \eqref{gg} it yields that
\begin{equation}\label{eq:331 pec a1b1}
\begin{split}
      \mathbf{0} ={\mathbf{i}k}&\sum_{l=1}^{\infty}\sum_{m=-l}^{l}\frac{1}{\sqrt{l(l+1)}}\Bigg\{a_l^ml(l+1)p_l(kr)Y_l^m \Big|_{\phi=\phi_0}
\cdot\boldsymbol{e_1}(\theta,\phi_0)\\
    &+\bigg( -b_l^mj_l(kr)\cdot\frac{m}{\sin\theta}Y_l^m \Big|_{\phi=\phi_0}+a_l^m
    q_l(kr)\cdot\frac{\partial{Y_l^m}}{\partial\theta}\Big|_{\phi=\phi_0}\bigg)
    \cdot\boldsymbol{e_2}(\theta,\phi_0)\Bigg\},
     \end{split}
	\end{equation}
	where $ \boldsymbol{e}_{1}\left(\theta, \phi_0\right)$ and $   \boldsymbol{e}_{2}\left(\theta, \phi_0\right)$ are defined in \eqref{eq:e1e2}. From Remark \ref{i2}, the lowest order of \eqref{eq:331 pec a1b1} with respect to the power of $r$ is $r^0$, which is contributed by $p_1(kr)$ and $q_1(kr)$. Similarly, the second lowest order of \eqref{eq:331 pec a1b1}  with respect to the power of $r$ is $r^1$, which is contributed by $j_1(kr)$, $p_2(kr)$ and $q_2(kr)$. Comparing the coefficient of $r^0$ and $r^1$ associated with    $ \boldsymbol{e}_{1}\left(\theta, \phi_0\right)$  on both sides of \eqref{eq:331 pec a1b1}, utilizing the orthogonality property  \eqref{ortho3}, we can obtain 	\eqref{eq:pmc 49} and \eqref{eq:41c}.

	Comparing the coefficient of $r^1$ in \eqref{eq:331 pec a1b1}  associate with $ \boldsymbol{e}_{2}\left(\theta, \phi_0\right)$, using Lemma \ref{lem:coeff0}, we deduce that
    \begin{equation}\label{iii1}
    \begin{split}
    &(b_1^1c_1^1e^{\bsi\alpha\cdot\pi}-b_1^{-1}c_1^{-1}e^{-\bsi\alpha\cdot\pi})P_1^1(\cos\theta)=0,
     \end{split}
     \end{equation}
    where  $c_1^{\pm 1}$ are nonzero constants  defined in \eqref{sphe harmonic}.
In view of \eqref{iii1}, from \eqref{ortho3} and $c_1^1=c_1^{-1}\neq 0$, it yields that \eqref{d1}.

Now we are in the position to prove \eqref{eq:lem31 33}, \eqref{eq:lem31 34} and \eqref{eq:bm1} under the assumption \eqref{eq:lem31 cond}.  Since the generalized impedance condition \eqref{eq:imp2} associated with $\boldsymbol{ \eta}_1$  is imposed on $\widetilde {\Pi}_1$ where $\boldsymbol{\eta}_1\equiv \infty $, substituting  \eqref{eq:lem31 cond} into \eqref{mix pi2} it yields that
\begin{equation}\label{eq:33 pec}
\begin{split}
&		\mathbf{0}=  \sum_{l=n}^{\infty}\sum_{m=-l}^{l}-\frac{1}{\sqrt{l(l+1)}}\Bigg\{b_l^ml(l+1)p_l(kr)Y_l^m\Big|_{\phi=0}
  \boldsymbol{e_1}(\theta,0)\\
    & \hspace{2cm} +\bigg(
    a_l^mj_l\big(kr\big)\frac{m}{\sin\theta}Y_l^m\Big|_{\phi=0}+b_l^m\cdot q_l(kr)
   \frac{\partial{Y_l^m}}{\partial\theta}\Big|_{\phi=0}
    \bigg)\boldsymbol{e_2}(\theta,0)\Bigg\},
  \end{split}
	\end{equation}
	where $ \boldsymbol{e}_{1}\left(\theta, 0\right)$ and $   \boldsymbol{e}_{2}\left(\theta, 0\right)$ are defined in \eqref{eq:e1e2}.
	
	The lowest order term in \eqref{eq:33 pec} with respect to the power of $r$ is $r^{n-1}$, which is contributed by $p_n(kr)$ and $q_n(kr)$ from Remark \ref{i2}. Since $ \boldsymbol{e}_{1}\left(\theta, \phi\right)$ and $   \boldsymbol{e}_{2}\left(\theta, \phi\right)$ are linearly independent for any $\theta$ and $\phi$,  where $   \boldsymbol{e}_{i}\left(\theta, \phi\right)$ are defined in \eqref{eq:e1e21},  from   Lemma \ref{lem:coeff0}, comparing the coefficient of $r^{n-1}$ both sides of \eqref{eq:33 pec} associated with $ \boldsymbol{e}_{1}\left(\theta, 0\right)$,	we can obtain
	\begin{equation}\notag
	\label{eq:lem31 33 n}
\begin{split}
&\sum_{m=0}^{n}c_n^m(b_n^m+b_n^{-m})P_n^m(\cos\theta)=0.
\end{split}
\end{equation}
 Utilizing the orthogonality property  \eqref{ortho3},  since  $c_n^m\neq 0$ for $m\in [n]_0$,  \eqref{eq:lem31 33} holds.

	From Remark \ref{i2} we know that the second lowest  order term in in \eqref{eq:33 pec} with respect to the power of $r$ is $r^{n}$, which is related to $j_n(kr)$, $p_{n+1}(kr)$ and $q_{n+1}(kr)$.  Since $ \boldsymbol{e}_{1}\left(\theta, \phi\right)$ and $   \boldsymbol{e}_{2}\left(\theta, \phi\right)$ are linear independently for any $\theta$ and $\phi$, comparing the coefficient of $r^{n}$ both sides of \eqref{eq:33 pec} associated with $ \boldsymbol{e}_{1}\left(\theta, 0\right)$, we can obtain
\begin{equation}\notag
\label{eq:bm1 n}
\begin{split}
&\sum_{m=0}^{n+1}c_{n+1}^m(b_{n+1}^m+b_{n+1}^{-m})P_{n+1}^m(\cos\theta)=0.
\end{split}
\end{equation}
 Using the orthogonality property  \eqref{ortho3}, together with the fact that $c_{n+1}^m\neq 0$ for $m\in [n+1]_0$, we see that \eqref{eq:bm1} holds.

Similarly, in view of Remark \ref{i2},  comparing the coefficient of $r^{n}$ both sides of \eqref{eq:33 pec} associated with $ \boldsymbol{e}_{2}\left(\theta, 0\right)$, we know that \eqref{eq:lem31 34} hold. 

We proceed to derive \eqref{eq:lem41 412}, \eqref{eq:lem41 43} and \eqref{eq:b7} under the assumption \eqref{eq:lem31 cond}.    Since the generalized impedance condition \eqref{eq:imp2} associated with $\boldsymbol{ \eta}_2$  is imposed on $\widetilde {\Pi}_2$ where $\boldsymbol{\eta}_2 \equiv 0$, substituting  \eqref{eq:lem31 cond} into \eqref{gg} it yields that
\begin{equation}\label{eq:331 pec}
\begin{split}
	     \mathbf{0}={\mathbf{i}k}&\sum_{l=n}^{\infty}\sum_{m=-l}^{l}\frac{1}{\sqrt{l(l+1)}}\Bigg\{a_l^ml(l+1)p_l(kr)Y_l^m \Big|_{\phi=\phi_0}
\cdot\boldsymbol{e_1}(\theta,\phi_0)\\
    &+\bigg( -b_l^mj_l(kr)\cdot\frac{m}{\sin\theta}Y_l^m \Big|_{\phi=\phi_0}+a_l^m
    q_l(kr)\cdot\frac{\partial{Y_l^m}}{\partial\theta}\Big|_{\phi=\phi_0}\bigg)
    \cdot\boldsymbol{e_2}(\theta,\phi_0)\Bigg\}.
     \end{split}
	\end{equation}
	where $ \boldsymbol{e}_{1}\left(\theta, \phi_0\right)$ and $   \boldsymbol{e}_{2}\left(\theta, \phi_0\right)$ are defined in \eqref{eq:e1e2}.
	
	The lowest order term in \eqref{eq:331 pec} with respect to the power of $r$ is $r^{n-1}$, which is contributed by $p_n(kr)$ and $q_n(kr)$ from Remark \ref{i2}. Since $ \boldsymbol{e}_{1}\left(\theta, \phi\right)$ and $   \boldsymbol{e}_{2}\left(\theta, \phi\right)$ are linear independently for any $\theta$ and $\phi$,  where $   \boldsymbol{e}_{i}\left(\theta, \phi\right)$ are defined in \eqref{eq:e1e2},  from   Lemma \ref{lem:coeff0}, comparing the coefficient of $r^{n-1}$ both sides of \eqref{eq:331 pec} associated with $ \boldsymbol{e}_{1}\left(\theta, \phi_0\right)$, 
	we can obtain
	\begin{equation}\label{eq:lem31 33 n}
\begin{split}
&\sum_{m=0}^{n}c_n^m(a_n^me^{\bsi m\alpha\cdot\pi}+a_n^{-m}e^{-\bsi m\alpha\cdot\pi})P_n^m(\cos\theta)=0.
\end{split}
\end{equation}
 Using the orthogonality property  \eqref{ortho3}, together with the fact that $c_n^m\neq 0$ for $m\in [n]_0$,  we can obtain \eqref{eq:lem41 412}.

	From Remark \ref{i2} we know that the second lowest order term in  \eqref{eq:331 pec} with respect to the power of $r$ is $r^{n}$, which is related to $j_n(kr)$, $p_{n+1}(kr)$ and $q_{n+1}(kr)$.  Since $ \boldsymbol{e}_{1}\left(\theta, \phi\right)$ and $   \boldsymbol{e}_{2}\left(\theta, \phi\right)$ are linearly independent for any $\theta$ and $\phi$, comparing the coefficient of $r^{n}$ both sides of \eqref{eq:331 pec} associated with $ \boldsymbol{e}_{1}\left(\theta, \phi_0\right)$, we can get
\begin{equation}\notag
\label{eq:bm1 n}
\begin{split}
&\sum_{m=0}^{n+1}c_{n+1}^m(a_{n+1}^me^{\bsi m\alpha\cdot\pi}+a_{n+1}^{-m}e^{-\bsi m\alpha\cdot\pi})P_{n+1}^m(\cos\theta)=0.
\end{split}
\end{equation}
 Utilizing the orthogonality property  \eqref{ortho3},  since  $c_{n+1}^m\neq 0$ for $m\in [n+1]_0$,  we derive \eqref{eq:b7}.

Similarly, in view of Remark \ref{i2},  comparing the coefficient of $r^{n}$ both sides of \eqref{eq:331 pec} associated with $ \boldsymbol{e}_{2}\left(\theta, \phi_0\right)$, we know that \eqref{eq:lem41 43} holds.

The proof is complete.  	
	
\end{proof}

\begin{theorem}\label{thm:pec pmc}
Under the same setup in Lemma~\ref{lem:31}, we have that
		\begin{align}\notag
			&\mathrm{Vani}(\mathbf{E}; \mathbf{0})\geq N,\quad \mbox{if }    \alpha \neq \frac{q }{2p}, \, p=1,\ldots, N,  
				\end{align}
				where  $N\in\mathbb{N}$  and for a fixed $p$, $ q=1,2,\ldots, 4p-1.$

\end{theorem}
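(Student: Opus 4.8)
The plan is to prove the estimate by induction on $N$, in close parallel with the proof of Theorem~\ref{th:two imp}, using Lemma~\ref{lem:31} as the driving mechanism and Lemma~\ref{lem:vani} to convert the vanishing of the Fourier coefficients into a lower bound on $\mathrm{Vani}(\mathbf{E};\mathbf{0})$. Concretely, it suffices to establish that under the stated angular restriction one has $a_l^m=b_l^m=0$ for all $m\in[l]_0$ and all $l=1,\dots,N$. The base case $N=1$ follows directly from the explicit identities \eqref{eq:b1b10 pec}, \eqref{eq:d1 pec}, \eqref{eq:pmc 49} and \eqref{d1}: these give $a_1^0=b_1^0=0$ together with two homogeneous $2\times2$ systems for $(a_1^1,a_1^{-1})$ and $(b_1^1,b_1^{-1})$ whose determinants equal $\pm2\cos(\alpha\pi)$. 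Since the hypothesis with $p=1$ forces $\alpha\neq\tfrac12,\tfrac32$ (and $\alpha\neq1$ is standing), we have $\cos(\alpha\pi)\neq0$ and all level-one coefficients vanish.

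For the inductive step I would assume the vanishing condition \eqref{eq:lem31 cond} up to order $n-1$ and invoke Lemma~\ref{lem:31}. Its $\boldsymbol{e}_1$-relations supply the \emph{symmetric} constraints $b_n^m+b_n^{-m}=0$ and $a_n^me^{\bsi m\alpha\pi}+a_n^{-m}e^{-\bsi m\alpha\pi}=0$ at level $n$ (with $a_n^0=b_n^0=0$), from \eqref{eq:lem31 33} and \eqref{eq:lem41 412}, as well as the same-shaped constraints \eqref{eq:bm1} and \eqref{eq:b7} one level higher. The decisive observation is that the cross-level coupling identities \eqref{eq:lem31 34} and \eqref{eq:lem41 43} collapse: evaluating $\partial_\theta Y_{n+1}^m$ at $\phi=0$ and $\phi=\phi_0$ and pairing the indices $\pm m$, the level-$(n+1)$ sums reduce to the combinations $b_{n+1}^m+b_{n+1}^{-m}$ and $a_{n+1}^me^{\bsi m\alpha\pi}+a_{n+1}^{-m}e^{-\bsi m\alpha\pi}$, which vanish by \eqref{eq:bm1} and \eqref{eq:b7}. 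Hence \eqref{eq:lem31 34} and \eqref{eq:lem41 43} degenerate into single sums over $\tfrac{P_n^m(\cos\theta)}{\sin\theta}$; testing against $P_n^{m'}(\cos\theta)$ and using the weighted orthogonality \eqref{ortho3} then yields the \emph{antisymmetric} constraints $a_n^m-a_n^{-m}=0$ and $b_n^me^{\bsi m\alpha\pi}-b_n^{-m}e^{-\bsi m\alpha\pi}=0$ for $m=1,\dots,n$.

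Combining the symmetric and antisymmetric relations for each fixed $m$ produces $2\times2$ systems whose determinants equal $2\cos(m\alpha\pi)$ up to sign, so that $\cos(m\alpha\pi)\,a_n^m=0$ and $\cos(m\alpha\pi)\,b_n^m=0$. Thus $a_n^m=b_n^m=0$ for all $m$ as soon as $\cos(m\alpha\pi)\neq0$ for $m=1,\dots,n$. I would close the induction by noting that $\cos(m\alpha\pi)=0$ forces $\alpha=(2j+1)/(2m)$ with $1\le 2j+1\le 4m-1$ because $\alpha\in(0,2)$; writing $p=m$ and $q=2j+1$ exhibits each such $\alpha$ as one of the excluded values $q/(2p)$ with $p\le N$ and $1\le q\le 4p-1$. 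Consequently the hypothesis guarantees $\cos(m\alpha\pi)\neq0$ for all $m\le N$, the induction propagates to level $N$, and Lemma~\ref{lem:vani} delivers $\mathrm{Vani}(\mathbf{E};\mathbf{0})\ge N$.

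The main obstacle I anticipate is the faithful treatment of the cross-level coupling: one must verify that the level-$(n+1)$ contributions in \eqref{eq:lem31 34} and \eqref{eq:lem41 43} genuinely cancel after the $\pm m$ pairing, which relies on the parity of $\partial_\theta P_{n+1}^{|m|}$ in $m$, on $c_{n+1}^m=c_{n+1}^{-m}$, and crucially on having \eqref{eq:bm1} and \eqref{eq:b7} available at the \emph{same} inductive stage. A secondary technical point is the correct application of the $1/\sin\theta$-weighted orthogonality \eqref{ortho3} to separate the individual modes $m$; once these are secured, the angular bookkeeping reduces to the elementary dichotomy $\cos(m\alpha\pi)\neq0$, and the stated condition is seen to be (more than) sufficient for it.
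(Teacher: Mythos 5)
Your proposal is correct and follows essentially the same route as the paper's own proof: induction on $N$, with the base case settled by the $2\times2$ systems from \eqref{eq:b1b10 pec}, \eqref{eq:d1 pec}, \eqref{eq:pmc 49}, \eqref{d1} (determinant $\pm 2\cos(\alpha\pi)$), and the inductive step driven by substituting the level-$(n+1)$ relations \eqref{eq:bm1}, \eqref{eq:b7} into the cross-level identities \eqref{eq:lem31 34}, \eqref{eq:lem41 43} to extract the antisymmetric constraints, which combined with \eqref{eq:lem31 33}, \eqref{eq:lem41 412} give per-$m$ systems with determinant $\pm 2\cos(m\alpha\pi)$. Your closing observation that the hypothesis $\alpha\neq q/(2p)$ (all $q$) is stronger than the odd-numerator condition actually needed matches the paper, whose induction only invokes the exclusions $\alpha\neq(2q+1)/(2p)$.
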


\begin{proof}
We prove this theorem by induction.  Assume that
\begin{equation}\label{eq:61 cond}
	\alpha \neq \frac{1}{2} \mbox{ and } \alpha \neq \frac{3}{2},
\end{equation}
we shall prove that $\mathrm{Vani}(\mathbf{E}; \mathbf{0})\geq 1$. Since the generalized impedance condition \eqref{eq:imp2} associated with $\boldsymbol{ \eta}_1$  is imposed on $\widetilde {\Pi}_1$ where $\boldsymbol{\eta}_1\equiv \infty $, from Lemma \ref{lem:31} we know that \eqref{eq:b1b10 pec} and \eqref{eq:d1 pec} hold. Similarly, since the generalized impedance condition \eqref{eq:imp2} associated with $\boldsymbol{ \eta}_2$  is imposed on $\widetilde {\Pi}_2$ where $\boldsymbol{\eta}_2 \equiv 0$, from Lemma \ref{lem:31} it yields that \eqref{eq:pmc 49} and \eqref{d1}.

Combing \eqref{eq:b1b10 pec},  \eqref{eq:d1 pec}, \eqref{eq:pmc 49} and \eqref{d1}, it yields that
\begin{equation}\label{eq:66 eqn}
   	\left\{\begin{array}{l}
   		a_1^1 -a_1^{-1}=0,\\
   		a_1^1 e^{\bsi\alpha \cdot \pi}+a_1^{-1} e^{- \bsi\alpha \cdot \pi}=0,
   	\end{array}  \right.
   	\left\{\begin{array}{l}
   		b_1^1 +b_1^{-1}=0,\\
   		b_1^1 e^{\bsi\alpha \cdot \pi}-b_1^{-1} e^{- \bsi\alpha \cdot \pi}=0.
   	\end{array}  \right.
   \end{equation}
Under \eqref{eq:61 cond} we have
$$
 \left|\begin{array}{cc}
    1 &  -1 \\
    e^{\bsi \alpha\cdot\pi} & e^{-\bsi \alpha\cdot\pi}
    \end{array}\right|
    =2\cos  (\alpha\cdot\pi) \neq0,
$$
which implies that $a_1^{\pm1}=b_1^{\pm1}=0$ from \eqref{eq:66 eqn}. Since $a_1^0=b_1^0=0$, from Lemma \ref{lem:vani}, we prove $\mathrm{Vani}(\mathbf{E}; \mathbf{0})\geq 1$ under the assumption \eqref{eq:61 cond}.

   Assume that
   \begin{equation}\label{eq:67 assump}
   	\alpha \neq \frac{1}{2},\quad 	\alpha \neq \frac{1}{4}, \quad \alpha \neq \frac{3}{4},\quad \alpha \neq \frac{5}{4}, \quad \alpha \neq \frac{3}{2}   \mbox{ and } 	\alpha \neq \frac{7}{4},
   \end{equation}
   which implies that $\mathrm{Vani}(\mathbf{E}; \mathbf{0})\geq 1$. Hence we have
\begin{equation}\label{eq:67 a1b1=0}
	a_1^{\pm1}=b_1^{\pm1}=a_1^0=b_1^0=0
\end{equation}
from Lemma \ref{lem:vani}. Since the generalized impedance condition \eqref{eq:imp2} associated with $\boldsymbol{ \eta}_1$  is imposed on $\widetilde {\Pi}_1$ where $\boldsymbol{\eta}_1\equiv \infty $, from Lemma \ref{lem:31} we have
\begin{equation}\label{eq:68 a2b2=0}
b_2^0=0,\quad b_2^1+b_2^{-1}=0,\quad  b_2^2+b_2^{-2}=0
\end{equation}
by \eqref{eq:lem31 33} and
\begin{equation}\label{eq:610 b3}
	 b_{3}^m+b_{3}^{-m}=0, \quad m=1,2, 3, \mbox{ and } b_{3}^0=0
\end{equation}
by \eqref{eq:bm1}.  Substituting \eqref{eq:610 b3} into the first equation of \eqref{eq:lem31 34}, it yields that
\begin{equation}\label{eq:610 a21}
	a_2^1 -a_2^{-1}=0,\quad a_2^2 -a_2^{-2}=0
\end{equation}
by noting $c_{3}^{m}=c_{3}^{-m} \neq 0$ for $ m=1,2, 3$, where $c_{3}^{m}$ and $c_{3}^{-m}$ are defined in \eqref{sphe harmonic}.

Similarly, in view of \eqref{eq:67 a1b1=0}, using Lemma \ref{lem:31},  we obtain that
    \begin{equation}\label{eq:611 a2 pmc}
a_2^0=0,\quad a_2^1 e^{\bsi\alpha \cdot \pi}+a_2^{-1} e^{-\bsi\alpha \cdot \pi}=0,\quad a_2^2 e^{\bsi2\alpha \cdot \pi}+a_2^{-2} e^{-\bsi2\alpha \cdot \pi}=0
\end{equation}	
by \eqref{eq:lem41 412} and
 \begin{equation}\label{eq:613 a3}
 	 a_{3}^m e^{\bsi m \alpha
 	 \pi }+a_{3}^{-m}e^{-\bsi m \alpha
 	 \pi }=0, \quad m=1,2, 3, \mbox{ and } a_{3}^0=0
 \end{equation}
 by \eqref{eq:b7}.  Substituting \eqref{eq:613 a3} into the second equation of \eqref{eq:lem41 43}, it yields that
\begin{equation}\label{eq:612 a2}
	b_2^1 e^{\bsi\alpha \cdot \pi}-b_2^{-1} e^{- \bsi\alpha \cdot \pi}=0,\quad b_2^2 e^{\bsi 2\alpha \cdot \pi}-b_2^{-2} e^{-\bsi2\alpha \cdot \pi}=0
	\end{equation}
	by 	using the fact that $c_{3}^{m}=c_{3}^{-m} \neq 0$ for $ m=1,2$ and the definition of $Y_{3}^m(\theta,\phi)$, where $c_{3}^{m}$ and $c_{3}^{-m}$ are defined in \eqref{sphe harmonic}.
Combing \eqref{eq:68 a2b2=0}, \eqref{eq:610 a21} and \eqref{eq:611 a2 pmc} with \eqref{eq:612 a2}, we obtain that
\begin{equation}\label{eq:613 four eqn}
\begin{split}
   &	\left\{\begin{array}{l}
   		a_2^1 -a_2^{-1}=0,\\
   		a_2^1 e^{\bsi\alpha \cdot \pi}+a_2^{-1} e^{- \bsi\alpha \cdot \pi}=0,
   	\end{array}  \right.
   	\left\{\begin{array}{l}
   		b_2^1 +b_2^{-1}=0,\\
   		b_2^1 e^{\bsi\alpha \cdot \pi}-b_2^{-1} e^{- \bsi\alpha \cdot \pi}=0,
   	\end{array}  \right. \\
   	& \left\{\begin{array}{l}
   		a_2^2 -a_2^{-2}=0,\\
   		a_2^2 e^{\bsi 2\alpha \cdot \pi}+a_2^{-2} e^{- \bsi 2\alpha \cdot \pi}=0,
   	\end{array}  \right.
   	\left\{\begin{array}{l}
   		b_2^2 +b_2^{-2}=0,\\
   		b_2^1 e^{\bsi 2\alpha \cdot \pi}-b_2^{-2} e^{- \bsi 2\alpha \cdot \pi}=0.
   	\end{array}  \right.
   	\end{split}
   \end{equation}
Under the assumption \eqref{eq:67 assump} it is easy to see that
$$
 \left|\begin{array}{cc}
    1 &  -1 \\
    e^{\bsi \alpha\cdot\pi} & e^{-\bsi \alpha\cdot\pi}
    \end{array}\right|
    =2\cos  (\alpha\cdot\pi) \neq0,\quad  \left|\begin{array}{cc}
    1 &  -1 \\
    e^{2\bsi \alpha\cdot\pi} & e^{-2\bsi \alpha\cdot\pi}
    \end{array}\right|
    =2\cos  (2\alpha\cdot\pi) \neq0
$$
which imply that $a_2^{\pm1}=b_2^{\pm1}=a_2^{\pm 2}=b_2^{\pm 2}=0$  in view of \eqref{eq:613 four eqn}. Due to \eqref{eq:68 a2b2=0} and \eqref{eq:611 a2 pmc},  we have $a_2^0=b_2^0=0$, hence from Lemma \ref{lem:vani} we prove $\mathrm{Vani}(\mathbf{E}; \mathbf{0})\geq 2$ under the assumption \eqref{eq:67 assump}.

By the induction, we assume that
 \begin{equation}\label{eq:614 assump}
   \alpha \neq \frac{2q+1  }{2p}, \, p=1,\ldots, n, \mbox{ for a fixed }p, \ \ q=0,1,\ldots, 2p-1. 
   \end{equation}
Therefore, we know that $\mathrm{Vani}(\mathbf{E}; \mathbf{0})\geq n-1$ from the induction under the assumption \eqref{eq:614 assump}, which implies that
\begin{equation}\label{eq:615 an-1=0}
	a_{l}^m=0 \mbox{ for } l=1,\ldots, n-1 \mbox{ and } m \in [l]_0.
\end{equation}
from Lemma \ref{lem:vani}.

Due to \eqref{eq:615 an-1=0} and the fact that the generalized impedance condition \eqref{eq:imp2} associated with $\boldsymbol{ \eta}_1$  is imposed on $\widetilde {\Pi}_1$ where $\boldsymbol{\eta}_1 \equiv \infty$, from Lemma \ref{lem:31}, we have
\begin{equation}\label{eq:616 bn1}
 b_n^m+b_n^{-m}=0, \quad m=1,\ldots, n, \mbox{ and } b_n^0=0
 \end{equation}
by  \eqref{eq:lem31 33} and
\begin{equation}\label{eq:616 bn+1}
	 b_{n+1}^m+b_{n+1}^{-m}=0, \quad m=1,\ldots, n+1, \mbox{ and } b_{n+1}^0=0
\end{equation}
by \eqref{eq:bm1}. Substituting \eqref{eq:616 bn+1} into the first equation of \eqref{eq:lem31 34}, it yields that
\begin{equation}\label{eq:617 an1}
	a_2^m -a_2^{-m}=0,\quad m=1,\ldots, n,
\end{equation}
by noting $c_{n+1}^{m}=c_{n+1}^{-m} \neq 0$ for $ m=1,\ldots, n$, where $c_{n+1}^{m}$ and $c_{n+1}^{-m}$ are defined in \eqref{sphe harmonic}.

Similarly, due to \eqref{eq:615 an-1=0} and the fact that the generalized impedance condition \eqref{eq:imp2} associated with $\boldsymbol{ \eta}_2$  is imposed on $\widetilde {\Pi}_2$ where $\boldsymbol{\eta}_2 \equiv 0$, using Lemma \ref{lem:31}, we get that
\begin{equation}\label{eq:618 an1}
 a_n^m e^{\bsi m \alpha
 	 \pi }+a_n^{-m} e^{-\bsi m \alpha
 	 \pi }=0, \quad m=1,\ldots, n, \mbox{ and } a_n^0=0
 \end{equation}
 by \eqref{eq:lem41 412} and
 \begin{equation}\label{eq:620 an+1}
 	 a_{n+1}^m e^{\bsi m \alpha
 	 \pi }+a_{n+1}^{-m}e^{-\bsi m \alpha
 	 \pi }=0, \quad m=1,\ldots, n+1, \mbox{ and } a_{n+1}^0=0
 \end{equation}
 by \eqref{eq:b7}.
 Substituting \eqref{eq:620 an+1} into the second equation of \eqref{eq:lem41 43}, it yields that
 \begin{equation}\label{eq:621 a2}
	b_n^m e^{\bsi m \alpha \cdot \pi}-b_n^{-m} e^{- \bsi m\alpha \cdot \pi}=0,\quad m=1,\ldots, n
		\end{equation}
by 	using the fact that $c_{n+1}^{m}=c_{n+1}^{-m} \neq 0$ for $ m=1,\ldots, n$ and the definition of $Y_{n+1}^m(\theta,\phi)$, where $c_{n+1}^{m}$ and $c_{n+1}^{-m}$ are defined in \eqref{sphe harmonic}.

	Combing \eqref{eq:616 bn1}, \eqref{eq:617 an1} and \eqref{eq:618 an1} with \eqref{eq:621 a2}, we obtain that
\begin{equation}\label{eq:622 two eqn}
   	\left\{\begin{array}{l}
   		a_n^m -a_n^{-m}=0,\\
   		a_n^m e^{\bsi m\alpha \cdot \pi}+a_n^{-m} e^{- \bsi m\alpha \cdot \pi}=0,
   	\end{array}  \right.
   	\left\{\begin{array}{l}
   		b_n^m +b_n^{-m}=0,\\
   		b_n^m e^{\bsi m\alpha \cdot \pi}-b_n^{-m} e^{- \bsi m\alpha \cdot \pi}=0,
   	\end{array}  \right. \quad m=1,\ldots, n.
   \end{equation}
Under the assumption \eqref{eq:614 assump} it is not difficult  to see that
$$
 \left|\begin{array}{cc}
    1 &  -1 \\
    e^{\bsi m \alpha\cdot\pi} & e^{-\bsi m\alpha\cdot\pi}
    \end{array}\right|
    =2\cos  (m\alpha\cdot\pi) \neq0,
$$
which imply that $a_n^{\pm m}=b_n^{\pm m}=0$  in view of \eqref{eq:622 two eqn}. Due to \eqref{eq:616 bn1} and \eqref{eq:618 an1},  we have $a_n^0=b_n^0=0$, hence from Lemma \ref{lem:vani} we prove $\mathrm{Vani}(\mathbf{E}; \mathbf{0})\geq n$ under the assumption \eqref{eq:67 assump}.

   The proof is complete.
\end{proof}

In the following two theorems, we consider the generalized impedance edge-corner  ${\mathcal E}(\widetilde \Pi_1,  \widetilde \Pi_2,\bsl)$ where  the generalize impedance parameter $ \boldsymbol{ \eta}_2$ on $\widetilde{ \Pi}_2 $ satisfies (iii) in \eqref{eq:imp1} and has the expansion \eqref{eq:eta2 ex}, whereas the generalize impedance parameter $ \boldsymbol{ \eta}_1$ on $\widetilde{ \Pi}_1 $ satisfies either (i) or  (ii) in \eqref{eq:imp1}. In the sequel, we shall make use of the reflection principles for the Maxwell equations from \cite{Liu3,Liu09}.

For any two-dimensional plane $\Pi \in \mathbb R^3 $, let $\nu_\Pi$ and ${\mathcal R}_\Pi$ be respectively the unit normal to $\Pi$ and the reflection with respect to $\Pi$ in $\mathbb R^3$. 


\begin{lemma}\cite[Theorems 2.1 and 2.2]{Liu09}\label{lem:reflection}
Consider a generalized impedance edge-corner ${\mathcal E}(\widetilde \Pi_1,  \widetilde  \Pi_2,\bsl) \Subset \Omega$ with $\angle(\Pi_{1},\Pi_2)=\phi_0=\alpha \pi$, where $\alpha\in(0,1)$. Assume that  the generalize impedance parameter $ \boldsymbol{ \eta}_2$ on $\widetilde{ \Pi}_2 $ satisfies (iii) in \eqref{eq:imp1} and has the expansion  \eqref{eq:eta2 ex} while  the generalize impedance parameter $ \boldsymbol{ \eta}_1$ on $\widetilde{ \Pi}_1 $ satisfies (ii) in \eqref{eq:imp1} (i.e.,  $ \boldsymbol{ \eta}_1 \equiv \infty $). Recall that $\Pi_1$ be a plane containing $\widetilde \Pi_1$. Let  ${\widetilde {\Pi}_2}'={\mathcal R}_{ \Pi_1}(\widetilde \Pi_2)$. Then 
	\begin{equation}\label{eq:lem62}
\nu_{\widetilde \Pi_2' } \wedge (\nabla\wedge \mathbf{E})+ \widetilde{ \boldsymbol{ \eta}}_2 (\nu_{\widetilde \Pi_2' }\wedge\mathbf{E})\wedge\nu_{\widetilde \Pi_2' } =\mathbf 0 	\mbox{ on } \widetilde \Pi_2',
	\end{equation}
	where $\nu_{\Pi_2' } $ is the unit normal to $\Pi_2'$ directed to the interior of  ${\mathcal E}(\widetilde \Pi_1, \widetilde \Pi_2',\bsl) $ and $\widetilde{ \boldsymbol{ \eta}}_2 (\mathbf x) =\boldsymbol{ \eta}_2({\mathcal R}_{\Pi_1} (\mathbf x) )$ for $\mathbf{x} \in \widetilde{\Pi}_2' $.
	
	 Similarly, consider a generalized impedance edge-corner ${\mathcal E}(\widetilde \Pi_1,  \widetilde  \Pi_2,\bsl) \Subset \Omega$ with $\angle(\Pi_{1},\Pi_2)=\phi_0=\alpha \pi$, where $\alpha\in(0,1)$. Assume that  the generalize impedance parameter $ \boldsymbol{ \eta}_2$ on $\widetilde{ \Pi}_2 $ satisfies (iii) in \eqref{eq:imp1} and has the expansion  \eqref{eq:eta2 ex} while  the generalize impedance parameter $ \boldsymbol{ \eta}_1$ on $\widetilde{ \Pi}_1 $ satisfies (i) in \eqref{eq:imp1} (i.e.,  $ \boldsymbol{ \eta}_1 \equiv 0 $). Recall that $\Pi_1$ be a plane containing $\widetilde \Pi_1$. Let  ${\widetilde {\Pi}_2}'={\mathcal R}_{ \Pi_1}(\widetilde \Pi_2)$. Then 
	\begin{equation} \notag
\nu_{\widetilde \Pi_2' } \wedge (\nabla\wedge \mathbf{E})+ \widetilde{ \boldsymbol{ \eta}}_2 (\nu_{\widetilde \Pi_2' }\wedge\mathbf{E})\wedge\nu_{\widetilde \Pi_2' } =\mathbf 0 	\mbox{ on } \widetilde \Pi_2',
	\end{equation}
	where $\nu_{\Pi_2' } $ is the unit normal to $\Pi_2'$ directed to the interior of  ${\mathcal E}(\widetilde \Pi_1, \widetilde \Pi_2',\bsl) $ and $\widetilde{ \boldsymbol{ \eta}}_2 (\mathbf x) =\boldsymbol{ \eta}_2({\mathcal R}_{\Pi_1} (\mathbf x) )$ for $\mathbf{x} \in \widetilde{\Pi}_2' $.
	\end{lemma}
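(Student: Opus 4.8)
The plan is to establish \eqref{eq:lem62} through the even/odd reflection principle for the Maxwell system across the plane $\Pi_1$, which is exactly the content of \cite[Theorems 2.1 and 2.2]{Liu09} adapted to the present impedance setting. Fixing coordinates as in Section~\ref{sec:2} so that $\Pi_1=\{x_2=0\}$ and $\nu_1=(0,-1,0)^\top$, the reflection ${\mathcal R}_{\Pi_1}$ acts on points by the orthogonal involution $R=\mathrm{diag}(1,-1,1)$, that is ${\mathcal R}_{\Pi_1}(\mathbf x)=R\mathbf x$, with $\det R=-1$. Throughout I would use the two elementary transformation laws for such an involution: for any smooth vector field $\mathbf V$ one has $\nabla\wedge\big(R\mathbf V(R\,\cdot)\big)=\det(R)\,R\,(\nabla\wedge\mathbf V)(R\,\cdot)=-R\,(\nabla\wedge\mathbf V)(R\,\cdot)$, and for any two vectors $R\mathbf u\wedge R\mathbf v=\det(R)\,R(\mathbf u\wedge\mathbf v)=-R(\mathbf u\wedge\mathbf v)$.

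First, in the case $\boldsymbol\eta_1\equiv\infty$, so that the PEC condition \eqref{eq:imp3} forces the tangential part of $\mathbf E$ to vanish on $\widetilde\Pi_1$, I would introduce the reflected pair $\mathbf E^\sharp(\mathbf x):=-R\,\mathbf E({\mathcal R}_{\Pi_1}\mathbf x)$ and $\mathbf H^\sharp(\mathbf x):=R\,\mathbf H({\mathcal R}_{\Pi_1}\mathbf x)$. Using the curl law above together with \eqref{eq:eig}, a direct check shows that $(\mathbf E^\sharp,\mathbf H^\sharp)$ again solves \eqref{eq:eig}. On $\Pi_1$ one has ${\mathcal R}_{\Pi_1}\mathbf x=\mathbf x$, and since the tangential components $E_1,E_3$ vanish there, $\mathbf E^\sharp=\mathbf E$ while the tangential part of $\mathbf H^\sharp$ equals that of $\mathbf H$; hence the tangential Cauchy data of $(\mathbf E^\sharp,\mathbf H^\sharp)$ and $(\mathbf E,\mathbf H)$ coincide on $\Pi_1$. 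Because both pairs are real-analytic solutions of \eqref{eq:eig} near $\bsl$, the unique continuation property forces $\mathbf E^\sharp\equiv\mathbf E$, which yields the pointwise reflection identities $\mathbf E(\mathbf x)=-R\,\mathbf E({\mathcal R}_{\Pi_1}\mathbf x)$ and $(\nabla\wedge\mathbf E)(\mathbf x)=R\,(\nabla\wedge\mathbf E)({\mathcal R}_{\Pi_1}\mathbf x)$.

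Next I would transfer the boundary condition from $\widetilde\Pi_2$ to $\widetilde\Pi_2'={\mathcal R}_{\Pi_1}(\widetilde\Pi_2)$. For $\mathbf x\in\widetilde\Pi_2'$ set $\mathbf y={\mathcal R}_{\Pi_1}\mathbf x\in\widetilde\Pi_2$ and take $\nu_{\widetilde\Pi_2'}=R\nu_2$, with the orientation prescribed in the statement. Substituting the two reflection identities into the hypothesis $\nu_2\wedge(\nabla\wedge\mathbf E)(\mathbf y)+\boldsymbol\eta_2(\mathbf y)(\nu_2\wedge\mathbf E(\mathbf y))\wedge\nu_2=\mathbf 0$, writing $\nu_2=R\nu_{\widetilde\Pi_2'}$, and applying $R\mathbf u\wedge R\mathbf v=-R(\mathbf u\wedge\mathbf v)$ to every cross product, one finds that each term carries a common invertible factor $-R$ which cancels. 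What remains is precisely $\nu_{\widetilde\Pi_2'}\wedge(\nabla\wedge\mathbf E)(\mathbf x)+\boldsymbol\eta_2(\mathbf y)\,(\nu_{\widetilde\Pi_2'}\wedge\mathbf E(\mathbf x))\wedge\nu_{\widetilde\Pi_2'}=\mathbf 0$, and since $\boldsymbol\eta_2(\mathbf y)=\boldsymbol\eta_2({\mathcal R}_{\Pi_1}\mathbf x)=\widetilde{\boldsymbol\eta}_2(\mathbf x)$, this is exactly \eqref{eq:lem62}.

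Finally, the case $\boldsymbol\eta_1\equiv 0$, namely the condition $\nu_1\wedge(\nabla\wedge\mathbf E)=\mathbf 0$ equivalent to the tangential part of $\mathbf H$ vanishing on $\widetilde\Pi_1$, is handled by the dual reflected field $\mathbf E^\sharp(\mathbf x):=R\,\mathbf E({\mathcal R}_{\Pi_1}\mathbf x)$ — equivalently, by applying the PEC argument to $\mathbf H$ via the $(\mathbf E,\mathbf H)\mapsto(-\mathbf H,\mathbf E)$ symmetry of \eqref{eq:eig}. Here $\mathbf E$ reflects with $+R$ and $\nabla\wedge\mathbf E$ with $-R$, and the extra sign is compensated inside the impedance term, so the same computation once more delivers \eqref{eq:lem62}. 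I expect the main obstacle to be the sign bookkeeping: one must track the opposite parities of the polar field $\mathbf E$ and the pseudovector $\nabla\wedge\mathbf E$ (equivalently $\mathbf H$), verify that the oriented normal $\nu_{\widetilde\Pi_2'}=R\nu_2$ is exactly the one keeping the first term of \eqref{eq:lem62} positively signed, and confirm that ${\mathcal R}_{\Pi_1}$ preserves both the Maxwell structure and the class ${\mathcal A}(\bsl)$ of the reflected impedance $\widetilde{\boldsymbol\eta}_2$, the last point being what later renders the doubled edge-corner amenable to Theorem~\ref{th:two imp}.
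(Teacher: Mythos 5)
Your proposal is correct, but there is essentially nothing in the paper to compare it against: the paper gives no proof of this lemma, which is quoted wholesale from \cite[Theorems 2.1 and 2.2]{Liu09}; the only related reasoning the paper supplies appears inside the proof of Theorem~\ref{thm:imp pec}, where it is observed that reflection preserves the polar angle $\theta$, so that the reflected impedance $\widetilde{\boldsymbol\eta}_2$ remains in the class $\mathcal{A}(\bsl)$. What you have done is reconstruct the cited reflection principle from first principles, and the reconstruction is sound: the parity choices $\mathbf{E}^\sharp=-R\,\mathbf{E}(R\,\cdot)$ in the PEC case and $\mathbf{E}^\sharp=+R\,\mathbf{E}(R\,\cdot)$ in the PMC case, combined with the identity $\nabla\wedge\bigl(R\mathbf{V}(R\,\cdot)\bigr)=\det(R)\,R\,(\nabla\wedge\mathbf{V})(R\,\cdot)$, show that the reflected pair again solves \eqref{eq:eig}; matching of the tangential traces of both $\mathbf{E}$ and $\mathbf{H}$ on $\widetilde\Pi_1$ plus Holmgren's principle and analytic continuation give $\mathbf{E}^\sharp\equiv\mathbf{E}$ — here you use, correctly, that \eqref{eq:eig} holds in the full ball $B_\rho(\bsl)\Subset\Omega$, so that $\mathbf{E}$ is defined on both sides of $\Pi_1$ and the reflected field lives in the same neighbourhood; and in the transfer step every term acquires the same invertible factor $\mp R$, which cancels. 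Your computation also settles the one genuinely delicate point that the paper's statement leaves vague: since the impedance condition is not invariant under $\nu\mapsto-\nu$ (flipping the normal flips the sign of $\boldsymbol\eta$), the conclusion \eqref{eq:lem62} with coefficient $+\widetilde{\boldsymbol\eta}_2$ holds precisely for the choice $\nu_{\widetilde\Pi_2'}=R\nu_2$, which is the exterior normal of the doubled wedge formed by $\Pi_2$ and $\Pi_2'$ — exactly the orientation needed when Theorem~\ref{th:two imp} is applied to the corner ${\mathcal E}(\widetilde\Pi_2,\widetilde\Pi_2',\bsl)$ in the proof of Theorem~\ref{thm:imp pec}. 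In short, the citation buys the paper brevity, while your argument buys a self-contained verification, including the sign and orientation bookkeeping that the citation hides.
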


\begin{theorem}\label{thm:imp pec}
Let $\mathbf{E}$ be a solution to \eqref{eq:eig}. Consider a generalized impedance edge-corner ${\mathcal E}(\widetilde \Pi_1,  \widetilde  \Pi_2,\bsl) \Subset \Omega$ with $\angle(\Pi_{1},\Pi_2)=\phi_0=\alpha \pi$, where $\alpha\in(0,2)$ and $\alpha \neq 1$. Assume that  the generalize impedance parameter $ \boldsymbol{ \eta}_2$ on $\widetilde{ \Pi}_2 $ satisfies (iii) in \eqref{eq:imp1} and has the expansion  \eqref{eq:eta2 ex} while  the generalize impedance parameter $ \boldsymbol{ \eta}_1$ on $\widetilde{ \Pi}_1 $ satisfies (ii) in \eqref{eq:imp1} (i.e.,  $ \boldsymbol{ \eta}_1 \equiv \infty $).   Then
		\begin{align}\label{eq:Th44 cond}
			&\mathrm{Vani}(\mathbf{E}; \mathbf{0})\geq N,\quad \mbox{if }    \alpha \neq \frac{q  }{2p}, \, p=1,\ldots, N,  
				\end{align}
				where  $N\in\mathbb{N}$  and for a fixed $p$, $ q=1,2,\ldots, 4p-1.$
\end{theorem}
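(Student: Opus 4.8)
The plan is to remove the face on which $\boldsymbol{\eta}_1 \equiv \infty$ by means of the reflection principle of Lemma~\ref{lem:reflection}, thereby converting the present mixed edge-corner into a genuine two-impedance edge-corner to which the already-established Theorem~\ref{th:two imp} applies. Since $\boldsymbol{\eta}_1 \equiv \infty$ on $\widetilde{\Pi}_1 \subset \Pi_1$, Lemma~\ref{lem:reflection} shows that $\mathbf{E}$ also satisfies a generalized impedance condition \eqref{eq:imp2} on the reflected face $\widetilde{\Pi}_2' = \mathcal{R}_{\Pi_1}(\widetilde{\Pi}_2)$, with impedance $\widetilde{\boldsymbol{\eta}}_2(\mathbf{x}) = \boldsymbol{\eta}_2(\mathcal{R}_{\Pi_1}(\mathbf{x}))$. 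As $\widetilde{\boldsymbol{\eta}}_2$ inherits from \eqref{eq:eta2 ex} the same nonzero leading coefficient $\eta_2$ and again belongs to $\mathcal{A}(\bsl)$, the pair $(\widetilde{\Pi}_2', \widetilde{\Pi}_2)$ constitutes a two-impedance edge-corner of exactly the type treated in Theorem~\ref{th:two imp}.

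\textbf{Transformation of the opening angle.} I would first record how the angle changes. Placing $\widetilde{\Pi}_1$ at $\phi = 0$ and $\widetilde{\Pi}_2$ at $\phi = \alpha\pi$, the mirror $\widetilde{\Pi}_2'$ sits at $\phi = -\alpha\pi$. For $\alpha \in (0,1)$ the two impedance faces bound a wedge of opening $2\alpha\pi$, so after a rigid motion (under which \eqref{eq:eig} is invariant) one obtains a standard two-impedance edge-corner with parameter $\beta = 2\alpha \in (0,2)$. For $\alpha \in (1,2)$ the doubling $2\alpha\pi$ would exceed $2\pi$; here the admissible non-overlapping wedge bounded by $\widetilde{\Pi}_2'$ (at $(2-\alpha)\pi$) and $\widetilde{\Pi}_2$ (at $\alpha\pi$) has opening $2(\alpha-1)\pi$, giving $\beta = 2(\alpha-1) \in (0,2)$.

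\textbf{Invoking Theorem~\ref{th:two imp} and translating.} Theorem~\ref{th:two imp} then yields $\mathrm{Vani}(\mathbf{E};\mathbf{0}) \geq N$ for the reflected corner as soon as $\beta \neq q'/p'$ for every $p' = 1,\ldots,N$ and $q' = 1,\ldots,2p'-1$. In the regime $\alpha \in (0,1)$, substituting $\beta = 2\alpha$ turns each forbidden value into $\alpha = q'/(2p')$ with $q' \in \{1,\ldots,2p'-1\}$, that is, the values $q/(2p)$ with $q \in \{1,\ldots,2p-1\}$. In the regime $\alpha \in (1,2)$, substituting $\beta = 2(\alpha-1)$ yields $\alpha = (2p'+q')/(2p')$, that is, the values $q/(2p)$ with $q \in \{2p+1,\ldots,4p-1\}$. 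Taking the union over both regimes, and observing that the single omitted value $q = 2p$ is exactly $\alpha = 1$ (already excluded by hypothesis), recovers precisely the exclusion set $\alpha \neq q/(2p)$, $p = 1,\ldots,N$, $q = 1,\ldots,4p-1$, appearing in \eqref{eq:Th44 cond}; in particular the degenerate value $\beta = 1$, where $\sin(\beta\pi)$ vanishes, corresponds to $\alpha = 1/2, 3/2$, namely the cases $p = 1$, $q \in \{1,3\}$.

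\textbf{Main obstacle.} The principal difficulty is geometric and lies in the regime $\alpha \in (1,2)$, where Lemma~\ref{lem:reflection} is stated only for $\alpha \in (0,1)$ and the mirrored half-plane overlaps the original wedge. I would resolve this by noting that the reflected impedance identity on $\widetilde{\Pi}_2'$ is a purely local, pointwise consequence of the homogeneous relation \eqref{eq:imp3} on $\Pi_1$ and of the analyticity of $\mathbf{E}$, hence valid irrespective of the opening; the relevant two-impedance corner is then the admissible wedge of opening $2(\alpha-1)\pi$, and one must verify that the outward normals it induces on $\widetilde{\Pi}_2$ and $\widetilde{\Pi}_2'$ are compatible with the sign conventions in \eqref{eq:imp2} before Theorem~\ref{th:two imp} can be applied verbatim. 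A more delicate point is that the reflected field carries the additional symmetry forced by the condition $\boldsymbol{\eta}_1 \equiv \infty$ on $\Pi_1$; keeping track of this symmetry is what reconciles the base-level ($m = \pm 1$) nondegeneracy of Theorem~\ref{th:two imp}, which rests on $\cos(\beta\pi) \neq 0$, with the precise exclusion range claimed here, and is the step demanding the most care.
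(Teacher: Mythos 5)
Your proposal is correct and follows essentially the same route as the paper's proof: reflect across the PEC face via Lemma~\ref{lem:reflection}, note that $\widetilde{\boldsymbol{\eta}}_2$ retains the expansion \eqref{eq:eta2 ex}, and apply Theorem~\ref{th:two imp} to the resulting doubled-angle two-impedance edge-corner, translating its exclusion set back to conditions on $\alpha$. The only organizational difference is that the paper folds the doubled angle into $(0,\pi]$ via four cases (openings $2\alpha\pi$, $2(1-\alpha)\pi$, $2(\alpha-1)\pi$, $2(2-\alpha)\pi$), whereas you keep two cases with $\beta\in(0,2)$; both bookkeepings produce the same exclusion set $\alpha\neq q/(2p)$, $q=1,\ldots,4p-1$, and your explicit remarks on the $\beta=1$ degeneracy and on extending the reflection identity to $\alpha\in(1,2)$ (which the paper passes over silently) are sound.
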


\begin{proof}
Let 	$\widetilde \Pi_2'={\mathcal R}_{{\Pi}_1}(\widetilde \Pi_2)$, where ${\Pi}_1$ is  a plane containing $\Pi_1$. With the help of Lemma \ref{lem:reflection}, we know that  $\mathbf{E}$ satisfies the generalized impedance boundary condition \eqref{eq:lem62} on $\widetilde \Pi_2'$. Since $\mathbf{x} \in \widetilde{\Pi}_2 $, we have the spherical coordinate of $\mathbf{x}=(r, \theta, \phi_0)$, where $0\leq r \leq h$, $\theta \in [-\pi,\pi]$ and $\phi_0=\alpha \pi $.  It is clear that the spherical coordinate of  ${\mathcal R}_{\Pi_1} (\mathbf x) )$, where $\mathbf{x} \in \widetilde{\Pi}_2$, is given by
\begin{equation}
	\notag
(r, \theta, \phi_1), \mbox{ where } \phi_1=2-\alpha \in (0,2). 	
\end{equation}  
Recall that  $ \boldsymbol{ \eta}_2$  has the expansion  \eqref{eq:eta2 ex}. Although $\mathbf{x} \in \widetilde{\Pi}_2$ and ${\mathcal R}_{\Pi_1} (\mathbf x) ) \in  \widetilde{\Pi}_2'$ have different azimuthal angles but they have the same  polar angle $\theta$, hence from Definition \ref{def:class1}, we know that $\widetilde{\boldsymbol{\eta}}_2 $ has the same expansion \eqref{eq:eta2 ex} as ${\boldsymbol{\eta}}_2 $.

Furthermore, the dihedral angle between $\widetilde \Pi_2$ and $\widetilde \Pi_2'$ satisfies 
$$
\angle(\widetilde \Pi_2,\widetilde \Pi_2' )= \begin{cases}
2\alpha \pi \in (0,\pi],  \hspace{1.3cm} \alpha \in (0,1/2),\\[5pt]
	2(1-\alpha)\pi\in (0,\pi],\quad \alpha \in [1/2,1),\\[5pt]	
	2(\alpha-1)\pi\in (0,\pi],\quad \alpha \in (1,3/2),\\[5pt]	
	2(2-\alpha)\pi\in (0,\pi],\quad \alpha \in [3/2,2),
	\end{cases}
$$

We divide our remaining proof into four  separate cases. Recall that that the Maxwell system \eqref{eq:eig} is invariant under rigid motions. Without loss of generality, we assume that the  generalized impedance edge-corner ${\mathcal E}(\widetilde \Pi_2,  \widetilde  \Pi_2',\bsl) \Subset \Omega$ are placed as shown in Figure \ref{fig:coordinate1}. 
		
			\medskip
	
	\noindent {\bf Case 1.}~If $\alpha \in (0,1/2)$, then $2\alpha \in (0,1)$. By virtue of Theorem \ref{th:two imp}, if 
\begin{equation}\label{eq:623 alpha}
2\alpha \neq \frac{q}{p}, \quad p=1,\ldots,N, \mbox{ for a fixed } p, \, q=1,\ldots, p-1,  
\end{equation}
we have $\mathrm{Vani}(\mathbf{E}; \mathbf{0})\geq N$. It is easy to see that \eqref{eq:623 alpha} is equivalent to 
\begin{equation}\label{eq:624}
	\alpha \neq \frac{q}{2p}, \quad p=1,\ldots,N, \mbox{ for a fixed } p, \, q=1,\ldots, p-1.
\end{equation}

		\noindent {\bf Case 2.}~If $\alpha \in [1/2,1)$, then $2(1-\alpha) \in (0,1]$. By virtue of Theorem \ref{th:two imp},  if 
\begin{equation}\label{eq:624 alpha}
2(1-\alpha) \neq \frac{q}{p}, \quad p=1,\ldots,N, \mbox{ for a fixed } p, \, q=1,\ldots, p,  
\end{equation}
we have $\mathrm{Vani}(\mathbf{E}; \mathbf{0})\geq N$. It is easy to see that \eqref{eq:624 alpha}  is equivalent to 
\begin{equation}\label{eq:626}
	\alpha \neq \frac{q}{2p}, \quad p=1,\ldots,N, \mbox{ for a fixed } p, \, q=p,\ldots, 2p-1.
\end{equation}

\noindent {\bf Case 3.}~If $\alpha \in (1,3/2)$, then $2(\alpha-1) \in (0,1)$. By virtue of Theorem \ref{th:two imp},  if 
\begin{equation}\label{eq:624 alpha1}
2(\alpha-1) \neq \frac{q}{p}, \quad p=1,\ldots,N, \mbox{ for a fixed } p, \, q=1,\ldots, p-1,  
\end{equation}
we have $\mathrm{Vani}(\mathbf{E}; \mathbf{0})\geq N$. It is easy to see that \eqref{eq:624 alpha1}  is equivalent to 
\begin{equation}\label{eq:626 1}
	\alpha \neq \frac{q}{2p}, \quad p=1,\ldots,N, \mbox{ for a fixed } p, \, q=2p+1,\ldots, 3p-1.
\end{equation}

\noindent {\bf Case 4.}~If $\alpha \in [3/2,2)$, then $2(2-\alpha) \in (0,1]$. By virtue of Theorem \ref{th:two imp},  if 
\begin{equation}\label{eq:624 alpha2}
2(2-\alpha) \neq \frac{q}{p}, \quad p=1,\ldots,N, \mbox{ for a fixed } p, \, q=1,\ldots, p-1,  
\end{equation}
we have $\mathrm{Vani}(\mathbf{E}; \mathbf{0})\geq N$. It is easy to see that \eqref{eq:624 alpha2}  is equivalent to 
\begin{equation}\label{eq:626 2}
	\alpha \neq \frac{q}{2p}, \quad p=1,\ldots,N, \mbox{ for a fixed } p, \, q=3p,\ldots, 4p-1.
\end{equation}

\medskip

In view of \eqref{eq:624}, \eqref{eq:626}, \eqref{eq:626 1} and \eqref{eq:626 2}, we finish the proof of this theorem.    
\end{proof}

With the help of Lemma \ref{lem:reflection}, using the similar argument for proving Theorem \ref{thm:imp pec}, we can prove the following theorem, where the detailed proof is omitted. 
\begin{theorem}\label{thm:imp pmc}
Let $\mathbf{E}$ be a solution to \eqref{eq:eig}. Consider a generalized impedance edge-corner ${\mathcal E}(\widetilde \Pi_1,  \widetilde  \Pi_2,\bsl) \Subset \Omega$ with $\angle(\Pi_{1},\Pi_2)=\phi_0=\alpha \pi$, where $\alpha\in(0,1)$. Assume that  the generalize impedance parameter $ \boldsymbol{ \eta}_2$ on $\widetilde{ \Pi}_2 $ satisfies (iii) in \eqref{eq:imp1} and has the expansion  \eqref{eq:eta2 ex} while  the generalize impedance parameter $ \boldsymbol{ \eta}_1$ on $\widetilde{ \Pi}_1 $ satisfies (i) in \eqref{eq:imp1} (i.e.,  $ \boldsymbol{ \eta}_1 \equiv 0 $).  Then
		\begin{align} \notag
			&\mathrm{Vani}(\mathbf{E}; \mathbf{0})\geq N,\quad \mbox{if }    \alpha \neq \frac{q  }{2p}, \, p=1,\ldots, N,  
				\end{align}
				where  $N\in\mathbb{N}$  and for a fixed $p$, $ q=1,2,\ldots, 4p-1$.

\end{theorem}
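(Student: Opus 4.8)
The plan is to run exactly the argument used for Theorem~\ref{thm:imp pec}, changing only one ingredient: instead of the first statement of Lemma~\ref{lem:reflection} (tailored to $\boldsymbol{\eta}_1\equiv\infty$), I would invoke its second statement, the one for $\boldsymbol{\eta}_1\equiv 0$. The decisive observation is that both reflection statements in Lemma~\ref{lem:reflection} have \emph{identical} conclusions: $\mathbf{E}$ satisfies a generalized impedance condition on the reflected face $\widetilde\Pi_2'=\mathcal{R}_{\Pi_1}(\widetilde\Pi_2)$ with reflected parameter $\widetilde{\boldsymbol{\eta}}_2(\mathbf{x})=\boldsymbol{\eta}_2(\mathcal{R}_{\Pi_1}(\mathbf{x}))$. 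Consequently, once the reflection has been applied, the mixed edge-corner $\mathcal{E}(\widetilde\Pi_1,\widetilde\Pi_2,\bsl)$ — carrying a Neumann-type ($\boldsymbol{\eta}_1\equiv 0$) face alongside a class-$\mathcal{A}$ impedance face — is replaced by the new edge-corner $\mathcal{E}(\widetilde\Pi_2,\widetilde\Pi_2',\bsl)$ whose two faces \emph{both} carry generalized impedance conditions, and the whole problem falls under the scope of Theorem~\ref{th:two imp}.

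First I would verify that $\widetilde{\boldsymbol{\eta}}_2$ again belongs to the class $\mathcal{A}(\bsl)$ and in fact admits the same expansion \eqref{eq:eta2 ex} as $\boldsymbol{\eta}_2$. Since $\Pi_1$ is the $(x_1,x_3)$-plane, the reflection $\mathcal{R}_{\Pi_1}$ sends the azimuthal angle $\phi$ to $-\phi$ while fixing both $r$ and the polar angle $\theta$; as the coefficients in \eqref{eq:eta2 ex} depend only on $r$ and $\theta$, Definition~\ref{def:class1} shows the expansion is preserved (in particular the nonzero leading coefficient $\eta_2$ is retained). This is the one point that deserves genuine care rather than pure transcription, because the applicability of Theorem~\ref{th:two imp} hinges on both reflected impedance parameters lying in $\mathcal{A}(\bsl)$ with non-vanishing leading term.

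The remaining work is the bookkeeping of the dihedral angle $\angle(\widetilde\Pi_2,\widetilde\Pi_2')$ and the resulting arithmetic conditions on $\alpha$. Because $\widetilde\Pi_2$ sits at azimuthal angle $\alpha\pi$ and $\widetilde\Pi_2'$ at $(2-\alpha)\pi$, the angle between them, measured through $\Pi_1$, equals $2\alpha\pi$ when $\alpha\in(0,1/2)$ and $2(1-\alpha)\pi$ when $\alpha\in[1/2,1)$. These are precisely Cases~1 and~2 in the proof of Theorem~\ref{thm:imp pec}; since the hypothesis here is $\alpha\in(0,1)$ (matching the range in Lemma~\ref{lem:reflection}), Cases~3 and~4 simply do not arise. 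Feeding the two admissible angles into Theorem~\ref{th:two imp} and requiring the relevant quantity ($2\alpha$ or $2(1-\alpha)$) to avoid every rational $q/p$ with $p\le N$ translates, after the same elementary manipulation as in \eqref{eq:624} and \eqref{eq:626}, into the single condition $\alpha\neq q/(2p)$ with $p=1,\dots,N$ and $q=1,\dots,2p-1$.

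Finally I would note that the values $q=2p,\dots,4p-1$ appearing in the statement correspond to $\alpha\ge 1$ and are therefore vacuous in the present range $\alpha\in(0,1)$, so the conclusion holds exactly as written. I expect no new algebraic obstruction beyond those already resolved for Theorem~\ref{thm:imp pec}: the entire subtlety of the two-impedance case is absorbed into Theorem~\ref{th:two imp}, and the reflection merely supplies its hypotheses. This is why the detailed computation may be safely omitted, the proof being a faithful analogue of that of Theorem~\ref{thm:imp pec} with $\boldsymbol{\eta}_1\equiv\infty$ replaced by $\boldsymbol{\eta}_1\equiv 0$.
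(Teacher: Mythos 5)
Your proposal is correct and follows essentially the same route as the paper: the paper explicitly proves this theorem by invoking the second (Neumann-type) statement of Lemma~\ref{lem:reflection} and repeating the argument of Theorem~\ref{thm:imp pec} — reflecting $\widetilde\Pi_2$ across $\Pi_1$, checking that $\widetilde{\boldsymbol{\eta}}_2$ retains the expansion \eqref{eq:eta2 ex}, and feeding the doubled angle into Theorem~\ref{th:two imp} — with the details omitted. Your additional observations (only Cases 1–2 arise since $\alpha\in(0,1)$, and the conditions with $q=2p,\dots,4p-1$ are vacuous in that range) are accurate and consistent with the paper's case analysis.
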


\section{Irrational intersections and infinite vanishing orders}\label{sec4}

From the results derived in Sections \ref{sec:5} to \ref{sec:6}, one can identify that the vanishing order of the eigenfunction $\mathbf E$ at a generalized impedance edge-corner relies on the degree of the dihedral angle of the underlying corner. Next, we introduce the irrational and rational edge-corners, and then, based on the results in Sections  \ref{sec:5} to \ref{sec:6}, we show that the vanishing order of the eigenfunction at an irrational edge-corner is generically infinity and hence it vanishes identically in $\Omega$, namely strong uniqueness continuation principle holds in such a case.

\begin{definition}
	Let ${\mathcal E}(\widetilde \Pi_1, \widetilde \Pi_2,\bsl)$ be an edge-corner defined in Section \ref{sec:Intro} and the corresponding dihedral angle of $\widetilde \Pi_1$ and $\widetilde \Pi_2$ is denoted by $\phi_0=\alpha \pi $, $\alpha\in(0,2)$ and $\alpha \neq 1$. If $\alpha$ is an irrational number, then the edge-corner is called irrational. If $\alpha$ is a rational number of the form $q/p$ with $p, q\in\mathbb{N}$ being irreducible, the edge-corner is called rational and $p$ is referred to as its rational degree. 
\end{definition}

%

We readily have the following theorem from Theorems~\ref{th:two imp}, \ref{thm:pec pmc}, \ref{thm:imp pec} and \ref{thm:imp pmc}.

\begin{theorem}\label{ir-2nodal}
Let $\mathbf{E}$ be a solution to \eqref{eq:eig}. Consider an irrational generalized impedance edge-corner ${\mathcal E}(\widetilde \Pi_1,  \widetilde  \Pi_2,\bsl) \Subset \Omega$ with $\angle(\Pi_{1},\Pi_2)=\phi_0=\alpha \pi$, where $\alpha\in(0,2)$ and $\alpha \neq 1$. Under the same requirement on $\boldsymbol{\eta}_j$, $j=1,2$, to either one from Theorems~\ref{th:two imp}, \ref{thm:pec pmc}, \ref{thm:imp pec} and \ref{thm:imp pmc}, 
	it holds that
	\begin{equation*}\label{result1}
	\mathrm{Vani}({\mathbf E}; {\mathbf 0})=+\infty,\quad {\mathbf 0}\in \bsl.
	\end{equation*}
\end{theorem}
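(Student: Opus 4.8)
The plan is to deduce the infinite vanishing order directly from the quantitative lower bounds already established in Theorems~\ref{th:two imp}, \ref{thm:pec pmc}, \ref{thm:imp pec} and \ref{thm:imp pmc}, exploiting the single structural feature they all share: each asserts $\mathrm{Vani}(\mathbf{E};\mathbf{0})\geq N$ provided that $\alpha$ avoids a \emph{finite} list of \emph{rational} thresholds, namely numbers of the form $q/p$ (in Theorem~\ref{th:two imp}) or $q/(2p)$ (in Theorems~\ref{thm:pec pmc}, \ref{thm:imp pec}, \ref{thm:imp pmc}), with $p\in\{1,\ldots,N\}$ and $q$ ranging over a finite index set depending on $p$. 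The whole point is that these excluded values are all rational, whereas $\alpha$ is assumed irrational.

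First I would fix an arbitrary $N\in\mathbb{N}$ and select, according to the prescribed impedance configuration on $\widetilde{\Pi}_1$ and $\widetilde{\Pi}_2$, the corresponding theorem among the four. Second, since $\alpha$ is irrational it cannot coincide with any rational number; in particular $\alpha\neq q/p$ and $\alpha\neq q/(2p)$ for every admissible pair $(p,q)$ with $p\leq N$. Hence the exclusion hypothesis of the selected theorem is satisfied vacuously, and we conclude $\mathrm{Vani}(\mathbf{E};\mathbf{0})\geq N$.

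Third, because $N$ was arbitrary, the inequality $\mathrm{Vani}(\mathbf{E};\mathbf{0})\geq N$ holds simultaneously for every $N\in\mathbb{N}$. Recalling Definition~\ref{def:3}, this means that \eqref{eq:normal3} is fulfilled for all $N$, which is precisely the assertion $\mathrm{Vani}(\mathbf{E};\mathbf{0})=+\infty$. By the remark following Definition~\ref{def:3}, together with the real-analyticity of $\mathbf{E}$ in $\Omega$, this further forces $\mathbf{E}\equiv\mathbf{0}$, i.e.\ strong unique continuation from the edge-corner, although the present statement only records the vanishing order.

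I do not anticipate a genuine analytic obstacle here, since all of the algebraic difficulty has already been absorbed into the four preceding theorems; the present argument is a clean logical passage to the limit $N\to\infty$. The only point demanding care is the bookkeeping: one must verify that, in each of the four impedance scenarios, the excluded set is indeed purely rational and finite for each fixed $N$, and that the admissible range of $\alpha$ in the hypothesis of Theorem~\ref{ir-2nodal} is compatible with that of the invoked theorem (for instance, the reflection-based Theorem~\ref{thm:imp pmc} restricts to $\alpha\in(0,1)$). Correctly matching the configuration of $\boldsymbol{\eta}_1,\boldsymbol{\eta}_2$ to the appropriate theorem is then essentially the entirety of the work.
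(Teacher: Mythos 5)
Your proposal is correct and is essentially the paper's own argument: the paper derives Theorem~\ref{ir-2nodal} directly from Theorems~\ref{th:two imp}, \ref{thm:pec pmc}, \ref{thm:imp pec} and \ref{thm:imp pmc} by observing that an irrational $\alpha$ avoids every rational excluded value $q/p$ or $q/(2p)$, so $\mathrm{Vani}(\mathbf{E};\mathbf{0})\geq N$ holds for every $N\in\mathbb{N}$. Your added bookkeeping remarks (finiteness and rationality of the excluded set for each fixed $N$, and matching the range of $\alpha$ to the invoked theorem) are exactly the right points of care but do not change the route.
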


%
%
%
%
%
%

    \section{Applications to inverse electromagnetic scattering problems}\label{sec5}



In this section, we consider two applications of the UCP results established in the previous sections to the inverse electromagnetic scattering problems. In what follows, we first present the mathematical formulation of the inverse problem of determining an impenetrable obstacle from its associated electromagnetic far-field measurement. It is a prototypical model problem for many real applications including radar/sonar, non-destructive testing and medical imaging.

\subsection{Unique identifiability results for inverse obstacle scattering problems}

Let $\Omega\subset\mathbb{R}^3$ be a bounded Lipschitz domain such that $\mathbb{R}^3\backslash\bar{\Omega}$ is connected, and the
incident electric and magnetic fields be of the form
\begin{equation}\label{eq:p1n}
\begin{aligned}
\mathbf{E}^{{i}}\big(\mathbf{x}\big) &:=\mathbf{p} {e}^{\mathbf{i} k \mathbf{x} \cdot \mathbf{d}},\ \ \mathbf{H}^{{i}}\big(\mathbf{x}\big) &:=\frac{1}{\mathbf{i}k}\nabla\wedge \mathbf{p}\,  {e}^{\mathbf{i} k \mathbf{x} \cdot \mathbf{d}}=\mathbf{d} \wedge \mathbf{p} \, {e}^{\mathbf{i} k \mathbf{x} \cdot \mathbf{d}},
\end{aligned}
\end{equation}
which are known as the time-harmonic electromagnetic plane waves, with ${\mathbf p}  \in \mathbb{R}^{3}\backslash\{\mathbf{0}\}, k \in \mathbb R_+$ and ${\mathbf d} \in \mathbb{S}^{2}:=\left\{\mathbf{x} \in \mathbb{R}^{3} ;|\mathbf{x}|=1\right\}$ representing respectively the polarization, wave number and direction of propagation, and it holds that $\mathbf{p}\perp\mathbf{d}$. The associated forward scattering problem can be  described by the following the time-harmonic Maxwell equations (cf. \cite{CK}):
\begin{equation}\label{eq:forward}
\begin{cases}
&\nabla\wedge\mathbf{E}-\mathbf{i} k \mathbf{H}=0 \quad \text { in } \quad \mathbb{R}^{3} \backslash \overline{\Omega}, \\
& \nabla\wedge \mathbf{H}+\mathbf{i} k \mathbf{E}=0 \quad \text { in } \quad \mathbb{R}^{3} \backslash\overline{\Omega}, \\
&\mathbf{E}({\mathbf x})=\mathbf{E}^{{i}}({\mathbf x})+\mathbf{E}^{s}({\mathbf x} ),\\
& \mathbf{H}({\mathbf x})=\mathbf{H}^{{i}}({\mathbf x})+\mathbf{H}^{s}({\mathbf x} ),\\
& \mathscr{B}({\mathbf E})={\mathbf 0}\hspace*{1.75cm}\mbox{on}\ \ \partial\Omega,\medskip\\
&\lim _{|{\mathbf x}| \rightarrow \infty}\left(\mathbf{H}^{s} \wedge {\mathbf x}-|{\mathbf x} | \mathbf{E}^{s}\right)={\mathbf 0},
\end{cases}
\end{equation}
where $\mathbf{E}=\left(E_{1}, E_{2}, E_{3}\right)$ and $\mathbf{H}=\left(H_{1}, H_{2}, H_{3}\right)$ are respectively the total electric and magnetic fields formed by the incident fields $\mathbf{E}^{{i}}({\mathbf x} )$ and $\mathbf{H}^{{i}}({\mathbf x} )$ and scattered fields $\mathbf{E}^{s}({\mathbf x} )$ and $\mathbf{H}^{s}({\mathbf x} )$. The last equation of \eqref{eq:forward} is the  Silver-M\"uller radiation condition. The boundary condition $\mathscr{B}(\mathbf{E} )$ on $\partial \Omega$ could be either of the following three conditions:
\begin{enumerate}
	\item the Dirichlet condition (corresponding to that $\Omega$ is a perfectly electric conducting (PEC) obstacle):
\begin{equation}\label{eq:83}
	\mathscr{B}(\mathbf{E})=\nu \wedge \mathbf{E};
\end{equation}
	\item the Neumann condition (corresponding to that $\Omega$ is a perfectly magnetic conducting (PMC) obstacle):
\begin{equation}\label{eq:84}
 	\mathscr{B}(\mathbf{E})=\nu \wedge ( \nabla \wedge \mathbf{E}) ;
 \end{equation}
	\item the impedance condition (corresponding to that $\Omega$ is an impedance obstacle):
	\begin{equation} \label{eq:bound imp}
		  \mathscr{B}(\mathbf{E})=\nu \wedge ( \nabla \wedge \mathbf{E})  +\boldsymbol \eta( \nu \wedge \mathbf{E}) \wedge \nu  ,\ \Re(\boldsymbol \eta)\geq 0 \mbox{ and } \Im(\boldsymbol\eta)<0,
	\end{equation}
	\end{enumerate}
where $\nu$ denotes the exterior unit normal vector to $\partial\Omega$ and $\boldsymbol\eta\in L^\infty(\Omega)$.  We would also like to point out that the conditions $\Re(\boldsymbol\eta)\geq 0 \mbox{ and } \Im(\boldsymbol\eta)<0$ are the physical requirement. 


In what follows, in order to ease the exposition and similar to our notation in \eqref{eq:imp1}--\eqref{eq:imp1}, we unify the three types of boundary conditions as
\begin{equation}\label{bound}
\mathscr{B}(\mathbf{E})=\nu \wedge ( \nabla \wedge \mathbf{E})  +\boldsymbol\eta( \nu \wedge \mathbf{E}) \wedge \nu \quad\mbox{on } \partial\Omega,
\end{equation}
where the cases that $\boldsymbol\eta=\infty$ and $\boldsymbol\eta=0$ stand for the Dirichlet and Neumann boundary conditions respectively.

For the forward scattering problem \eqref{eq:forward}, it is known that there exists a unique pair of solutions $({\mathbf E}, {\mathbf H}) \in$ $H_{\mathrm{loc }}(\mathrm{curl} , \mathbb{R}^{3} \backslash\overline{\Omega}) \times H_{\mathrm{loc }}(\mathrm{curl}, \mathbb{R}^{3} \backslash\overline{\Omega})$ (cf. \cite{Ned}). Furthermore, the radiating fields $\mathbf{E}^{s}$ and $\mathbf{H}^{s}$  to \eqref{eq:forward} possess the following asymptotic expansions
\begin{equation}\label{eq:far}
\begin{split}
	\mathbf{E}^{s}(\mathbf{x} ;  \Omega, k, \mathbf{d}, \mathbf{p})&=\frac{{e}^{\mathbf{i} k \mathbf{x}  \cdot {\mathbf d} }}{|\mathbf{x} |}\left\{  \mathbf{E}_{\infty}(\hat{\mathbf{x}} ; \Omega,  k, \mathbf{d}, \mathbf{p})+\mathcal{O}\left(\frac{1}{|\mathbf{x} |}\right)\right\} \quad \text { as }   \quad|\mathbf{x} | \rightarrow \infty, \\
	\mathbf{H}^{s}(\mathbf{x} ;  \Omega,  k, \mathbf{d}, \mathbf{p})&=\frac{{e}^{\mathbf{i} k \mathbf{x}  \cdot {\mathbf d} }}{|\mathbf{x} |}\left\{\mathbf{H}_{\infty}(\hat{\mathbf{x}} ;  \Omega, k, \mathbf{d}, \mathbf{p})+\mathcal{O}\left(\frac{1}{|\mathbf{x} |}\right)\right\} \quad \text { as } \quad|\mathbf{x} | \rightarrow \infty,
\end{split}
\end{equation}
which hold uniformly in the angular variable $\hat{\mathbf{x} }=\mathbf{x}  /|\mathbf{x} | \in \mathbb{S}^{2} .$ The functions $\mathbf{E}_{\infty}(\hat{\mathbf{x} })$ and $\mathbf{H}_{\infty}(\hat{\mathbf{x} })$ in \eqref{eq:far} are called, respectively, the electric and magnetic far field patterns, and both are analytic on the entire unit sphere $\mathbb{S}^{2}$. As above and also in what follows, the notation $\mathbf{U}(\mathbf{x} ; \Omega, \mathbf{p}, k, \mathbf{d})$ will be frequently used to specify the dependence of a given function $\mathbf{U}$ on the scatterer $\Omega,$ the polarization $\mathbf{p},$ the wave number $k$ and the incident direction $\mathbf{d}$.

The inverse  electromagnetic obstacle scattering problem corresponding to \eqref{eq:forward} is to recover $\Omega$ (and $\boldsymbol \eta$ as well in the impedance case) by the knowledge of the far-field pattern $ \mathbf{E}_{\infty}(\hat{\mathbf{x}} ; \Omega, \mathbf{p}, k, \mathbf{d})$ (or equivalently  $ \mathbf{H}_{\infty}(\hat{\mathbf{x}} ; \Omega, \mathbf{p}, k, \mathbf{d})$). By introducing an operator $\mathcal{F}$ which sends the obstacle to the corresponding far-field pattern, defined by the forward scattering system \eqref{eq:forward}, the aforementioned inverse problem can be formulated as
\begin{equation}\label{inverse}
\mathcal{F}(\Omega, \boldsymbol \eta)=  \mathbf{E}_{\infty}(\hat{\mathbf{x}} ; \Omega, k, \mathbf{d}, \mathbf{p}).
\end{equation}
It can be directly verified that the inverse problem \eqref{inverse} is nonlinear and moreover it is ill-conditioned (cf. \cite{CK}).  It is a longstanding problem that one can establish the one-to-one correspondence for \eqref{inverse} by a single far-field pattern or a finite number of far-field patterns (namely with a fixed triplet of $k$, $\mathbf{d}$ and $\mathbf{p}$ or a finite number of triplets of $k$, $\mathbf d$ and $\mathbf{p}$); see the recent survey paper \cite{CK18} by Colton and Kress for more discussions about the historical developments of this fundamental problem.

Under the assumption that $\Omega$ is a polyhedral obstacle associated with $\boldsymbol\eta\equiv 0$ or $\boldsymbol \eta\equiv \infty$, the unique correspondence, a.k.a unique identifiability, for the inverse problem \eqref{inverse} by a single far-field measurement was established in the literature; see \cite{LiuA,LRX,Liu3,Liu09}. However, it is still unclear whether one can establish the unique identifiability for an impedance obstacle of the polyhedral shape, even for the case that $\boldsymbol\eta$ is a nonzero constant, and a fortiori in our present paper $\boldsymbol\eta$ is a generalised impedance parameter which can be $0$, $\infty$ or a variable function. To be more specific about the generalised impedance obstacle, we introduce the following definition.

\begin{definition}\label{ir obstacle}

Let $\Omega$ be an open and bounded polyhedron in $\mathbb{R}^3$. Hence, $\partial\Omega$ possesses finitely many edge-corners that are formed by the intersections of any two adjacent faces of $\partial\Omega$. $\Omega$ is said to be irrational if all of its edge-corners are irrational, otherwise it is called rational, and the smallest degree among the rational degrees of all of its rational corners is referred to the degree of the polyhedron. 
\end{definition}

\begin{definition}\label{def:so1}
$(\Omega, \boldsymbol\eta)$ is said to be an admissible polyhedral obstacle if $\Omega$ is an open bounded polyhedron and $\boldsymbol\eta$ fulfils the following requirements. 
\begin{enumerate}
\item For each face of $\partial\Omega$, say $\widetilde\Pi$, and each edge of $\widetilde\Pi$, say $\bsl$, there exists a neighbourhood $\Sigma_{\bsl}:=B_\rho(\bsl)\cap \widetilde\Pi$ with $\rho\in\mathbb{R}_+$ and $B_\rho(\bsl):=\{\mathbf{x}\in\mathbb{R}^3; |\mathbf{x}-\mathbf{x}'|<\rho, \forall \mathbf{x}'\in\bsl \}$, such that either $\boldsymbol \eta|_{\Sigma_{\bsl}}=0$, or $\boldsymbol \eta|_{\Sigma_{\bsl}}=\infty$, or $\eta|_{\Sigma_{\bsl}}\in\mathcal{A}(\bsl)$. 
\item On any open subset of the other part of $\partial\Omega$ other than the neighbourhood of each edge of $\partial\Omega$ introduced in (1), $\boldsymbol \eta$ can be $0$, or $\infty$ or $\eta\in L^\infty$.

\item In the case $\boldsymbol \eta\in L^\infty$, one has that $\Re(\boldsymbol \eta)\geq 0$ and $\Im(\boldsymbol \eta)<0$. 
\end{enumerate}
\end{definition}

\begin{definition}\label{def6}
	$\Omega$ is said to be an admissible complex polyhedral obstacle if it consists of finitely many admissible polyhedral obstacles.
	That is,
	\begin{equation*}\label{eq:r2a}
	(\Omega, \boldsymbol \eta)=\bigcup_{j=1}^l (\Omega_j, \boldsymbol \eta_j),
	\end{equation*}
	where $l\in\mathbb{N}$ and each $(\Omega_j, \boldsymbol \eta_j)$ is an admissible polyhedral obstacle.
	Here, we define
	\begin{equation*}\label{eq:r2b}
	\boldsymbol \eta=\sum_{j=1}^l \boldsymbol \eta_j\chi_{\partial\Omega_j}.
	\end{equation*}
	Moreover, $\Omega$ is said to be irrational if all of its component polyhedral obstacles are irrational, otherwise it is said to be rational. For the latter case, the smallest degree among all the degrees of its rational components is defined to be the degree of the complex obstacle $\Omega$.
\end{definition}

Next, we first derive a local unique identifiability result in determining an admissible complex irrational polyhedral obstacle by a single far-field pattern.

\begin{theorem}\label{thm:uniqueness1}
	Consider a fixed triplet of $k\in\mathbb{R}_+$, $\mathbf{d}\in\mathbb{S}^2$ and $\mathbf{p}\in\mathbb{R}^3\backslash\{\mathbf{0}\}$.
	Let $(\Omega, \eta)$ and $(\widetilde\Omega, \widetilde\eta)$ be
	two admissible complex irrational obstacles, with $\mathbf{E}_\infty$ and $\widetilde{\mathbf{E}}_\infty$ being
	their corresponding far-field patterns
	and $\mathbf{G}$ being
	the unbounded connected component of $\mathbb{R}^3\backslash\overline{(\Omega\cup\widetilde\Omega)}$.
	If $\mathbf{E}_\infty$ and $\widetilde{\mathbf{E}}_\infty$ are the same in the sense that
	\begin{equation}\label{eq:cond1}
 \mathbf{E}_{\infty}(\hat{\mathbf{x}} ; \Omega, k, \mathbf{d}, \mathbf{p})=\widetilde {\mathbf{E}}_{\infty}(\hat{\mathbf{x}} ; \widetilde {\Omega},  k, \mathbf{d}, \mathbf{p}), \ \
	\mbox{for } ~~  ~~\mbox{all} ~~\hat{\mathbf x}\in\mathbb{S}^2,
	\end{equation}
	then
	$
	(\partial \Omega \backslash \partial \overline{ \widetilde{\Omega }} )\bigcup (\partial \widetilde{\Omega } \backslash \partial  \overline{ \Omega } )
	$
	cannot possess an edge-corner on $\partial \mathbf{G}$.
	Moreover,
	\begin{equation}\label{eta}
	\boldsymbol \eta=\widetilde{\boldsymbol \eta}\quad\mbox{on}\quad \partial\Omega\cap\partial\widetilde{\Omega}\cap\partial\mathbf{G}.
	\end{equation}
\end{theorem}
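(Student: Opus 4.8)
The plan is to argue by contradiction using the strong unique continuation established in Theorem~\ref{ir-2nodal}. Suppose that $(\partial\Omega\backslash\partial\overline{\widetilde\Omega})\cup(\partial\widetilde\Omega\backslash\partial\overline\Omega)$ does possess an edge-corner on $\partial\mathbf{G}$; call this edge-corner ${\mathcal E}(\widetilde\Pi_1,\widetilde\Pi_2,\bsl)$ and let $\mathbf{x}_0\in\bsl$ be an edge-corner point. First I would set $\mathbf{E}=\mathbf{E}^i+\mathbf{E}^s$ and $\widetilde{\mathbf{E}}=\mathbf{E}^i+\widetilde{\mathbf{E}}^s$ be the total electric fields associated with $\Omega$ and $\widetilde\Omega$ respectively, and invoke Rellich's lemma together with the assumption \eqref{eq:cond1}: since the far-field patterns coincide on all of $\mathbb{S}^2$, the scattered fields agree on the unbounded connected component, i.e. $\mathbf{E}^s=\widetilde{\mathbf{E}}^s$ in $\mathbf{G}$, and hence $\mathbf{E}=\widetilde{\mathbf{E}}$ in $\mathbf{G}$. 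This common field $\mathbf{E}$ is a solution to the Maxwell system \eqref{eq:eig} in a neighbourhood of the edge-corner inside $\mathbf{G}$, which is exactly the setting required by our UCP results.

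Next I would verify that the edge-corner ${\mathcal E}(\widetilde\Pi_1,\widetilde\Pi_2,\bsl)$ is indeed a generalized impedance edge-corner in the sense of \eqref{eq:imp2}, with the two faces carrying boundary conditions inherited from $\partial\Omega$ and $\partial\widetilde\Omega$. Here one uses the admissibility of the obstacles (Definition~\ref{def:so1}): on a neighbourhood of each edge the impedance parameter is either $0$, $\infty$, or a member of the class $\mathcal{A}(\bsl)$. The key geometric point is that because the edge-corner lies on $(\partial\Omega\backslash\partial\overline{\widetilde\Omega})\cup(\partial\widetilde\Omega\backslash\partial\overline\Omega)$, at least one of its two faces is a face of only one of the two obstacles, so that $\mathbf{E}$ (equivalently $\widetilde{\mathbf{E}}$, since they coincide in $\mathbf{G}$) satisfies the homogeneous generalized impedance condition on that face. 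I would treat the possible combinations of boundary conditions on the two faces so as to place ourselves in the hypotheses of one of Theorems~\ref{th:two imp}, \ref{thm:pec pmc}, \ref{thm:imp pec} or \ref{thm:imp pmc}; after a rigid motion we may assume $\mathbf{x}_0=\mathbf{0}$ as in Section~\ref{sec:2}.

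Since the obstacles are assumed to be \emph{irrational}, the dihedral angle $\phi_0=\alpha\pi$ of the edge-corner has $\alpha$ irrational, so Theorem~\ref{ir-2nodal} applies and yields $\mathrm{Vani}(\mathbf{E};\mathbf{0})=+\infty$. By the real-analyticity of solutions to \eqref{eq:eig} in the interior, infinite vanishing order at $\mathbf{0}$ forces $\mathbf{E}\equiv\mathbf{0}$ in the connected component of $\mathbf{G}$ containing the edge-corner, hence $\mathbf{E}\equiv\mathbf{0}$ throughout $\mathbf{G}$ by analytic continuation. This, however, contradicts the behaviour of the incident plane wave $\mathbf{E}^i=\mathbf{p}\,e^{\mathbf{i}k\mathbf{x}\cdot\mathbf{d}}$: the total field cannot vanish identically in the unbounded region, because $\mathbf{E}^s$ is radiating and decays at infinity while $\mathbf{E}^i$ does not, so $\mathbf{E}=\mathbf{E}^i+\mathbf{E}^s$ cannot be zero on $\mathbf{G}$. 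This contradiction establishes the first assertion. The main obstacle I anticipate is the careful bookkeeping in the second step, namely confirming that the homogeneous generalized impedance condition is genuinely inherited on at least one face of the edge-corner in every combination of $0/\infty/\mathcal{A}(\bsl)$ boundary data, and checking that the geometric configuration (which obstacle owns which face, and on which side $\mathbf{G}$ lies) really matches the normalized setup of Section~\ref{sec:2}.

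For the impedance identity \eqref{eta}, I would argue on the common boundary piece $\partial\Omega\cap\partial\widetilde\Omega\cap\partial\mathbf{G}$, where by the first part no mismatched edge-corner occurs, so $\Omega$ and $\widetilde\Omega$ share a genuine open portion of boundary $\Gamma$ with $\mathbf{E}=\widetilde{\mathbf{E}}$ on both sides. On $\Gamma$ both boundary conditions \eqref{bound} hold simultaneously, giving
\begin{equation}\notag
\boldsymbol\eta\,(\nu\wedge\mathbf{E})\wedge\nu=\widetilde{\boldsymbol\eta}\,(\nu\wedge\mathbf{E})\wedge\nu\quad\text{on }\Gamma,
\end{equation}
after subtracting the two impedance conditions and using that $\nu\wedge(\nabla\wedge\mathbf{E})$ is the same for both. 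Provided the tangential trace $(\nu\wedge\mathbf{E})\wedge\nu$ does not vanish on a relatively open subset of $\Gamma$ — which again follows from the strong unique continuation, since a vanishing tangential trace on an open boundary piece would propagate to $\mathbf{E}\equiv\mathbf{0}$ and contradict the non-decaying incident field — we may cancel this factor and conclude $\boldsymbol\eta=\widetilde{\boldsymbol\eta}$ on $\Gamma$, which is \eqref{eta}.
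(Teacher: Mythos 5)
Your overall strategy for the first assertion coincides with the paper's main line (Rellich's lemma to get $\mathbf{E}=\widetilde{\mathbf{E}}$ in $\mathbf{G}$, inheritance of the generalized impedance conditions on the two faces of the offending edge-corner, then the strong UCP of Theorem~\ref{ir-2nodal} at the irrational angle, contradicting the non-decay of the incident plane wave), and your argument is fine in the cases it covers. However, there is a genuine gap in your case analysis: the admissibility condition (Definition~\ref{def:so1}) allows \emph{both} faces meeting at an edge to carry $\boldsymbol\eta=\infty$, or both to carry $\boldsymbol\eta=0$, and these two combinations are covered by \emph{none} of Theorems~\ref{th:two imp}, \ref{thm:pec pmc}, \ref{thm:imp pec}, \ref{thm:imp pmc}; Theorem~\ref{thm:pec pmc} requires one face PEC and the other PMC, and the remaining theorems require at least one face in $\mathcal{A}(\bsl)$. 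Consequently Theorem~\ref{ir-2nodal} simply does not apply to a PEC--PEC or PMC--PMC edge, and irrationality of the angle buys you nothing there (indeed two PEC faces admit nontrivial fields, e.g.\ the total field of any PEC scattering problem). The paper devotes its entire Case~1 to exactly this situation, by a completely different mechanism: extend one face flatly inside $\mathbb{R}^3\backslash\overline\Omega$, show via the decay of $\mathbf{E}^s$ and the form of $\mathbf{E}^i$ that at least one extension must be bounded, enclose a bounded polyhedral domain $\Omega_1$, rule out any impedance portion of $\partial\Omega_1$ by a Green's-identity (energy) argument using $\Im(\boldsymbol\eta)<0$ plus Holmgren, and then invoke the path/reflection argument of \cite{LiuA,Liu09}. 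Without this case your contradiction argument is incomplete.

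A secondary, repairable flaw concerns \eqref{eta}: you justify cancelling the factor $(\nu\wedge\mathbf{E})\wedge\nu$ by claiming that a vanishing tangential trace of $\mathbf{E}$ on an open boundary piece would "propagate to $\mathbf{E}\equiv\mathbf{0}$ by strong unique continuation." That claim is false — $\nu\wedge\mathbf{E}=0$ on a flat piece is just a PEC condition and admits abundant nontrivial solutions; the UCP theorems of this paper concern two \emph{intersecting} planes, not a single one. The correct (and simpler) route, which is what the paper does, is: on the open subset where $\boldsymbol\eta\neq\widetilde{\boldsymbol\eta}$, subtracting the two impedance conditions gives $(\nu\wedge\mathbf{E})\wedge\nu=0$, and then either impedance condition forces $\nu\wedge(\nabla\wedge\mathbf{E})=0$ as well, i.e.\ $\nu\wedge\mathbf{H}=0$; with \emph{both} tangential traces vanishing, Holmgren's uniqueness principle yields $\mathbf{E}\equiv\mathbf{0}$ in $\mathbb{R}^3\backslash\Omega$, contradicting $\lim_{|\mathbf{x}|\to\infty}|\mathbf{E}|=|\mathbf{p}|\neq 0$. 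No non-vanishing hypothesis on the tangential trace is needed.
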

\begin{proof}
	    We prove the theorem by contradiction. Assume that
		$(\partial \Omega \backslash \partial \overline{ \widetilde{\Omega }} )\bigcup (\partial \widetilde{\Omega } \backslash \partial  \overline{ \Omega } )$ has an edge  corner $\mathbf x_c$ on $\partial \mathbf{G}$.  Then, $\mathbf x_c$ is either located at $\partial\Omega$ or $\partial\widetilde\Omega$. Without loss of generality, we assume that $\mathbf x_c$ is an edge corner of $\partial\widetilde\Omega$, which also indicates that $\mathbf{x}_c$ lies outside $\Omega$. Let $h\in\mathbb{R}_+$ be sufficiently small such that $B_h(\mathbf{x}_c)\Subset\mathbb{R}^2\backslash\overline \Omega $, then we have
		\begin{equation*}\label{eq:aa2}
		B_h(\mathbf x_c)\cap \partial\widetilde\Omega=\widetilde\Pi_\ell,\quad \ell=1,2, 
		\end{equation*}
		where $\widetilde\Pi_\ell$ are two flat subsets lying on the faces of $\widetilde\Omega$ that intersect at $\mathbf x_c$. Moreover, for the subsequent use, we let $h$ be smaller than $\rho$, where $\rho$ is the parameter in Definition~\ref{def:so1}. Hence we have an edge-corner $\mathcal{E}(\widetilde\Pi_1,\widetilde\Pi_2,\bsl)\in \partial \mathbf G$ with $\mathbf{x}_c\in \bsl $, where $\mathbf{G}$ is the unbounded connected component of $\mathbb{R}^3\backslash\overline{(\Omega\cup\widetilde\Omega)}$.  By \eqref{eq:cond1} and the Rellich theorem (cf. \cite{CK}), we know that
		\begin{equation}\label{eq:aa3}
		\mathbf{E}(\mathbf x; k, \mathbf{d},\mathbf{p})=\widetilde{\mathbf{E}}(\mathbf x; k, \mathbf{d},\mathbf{p}),\quad {\mathbf x} \in\mathbf{G}.
		\end{equation}
		Since $\widetilde\Pi_\ell \subset\partial\mathbf{G}$, $\ell=1,2$, combining \eqref{eq:aa3} with the generalized boundary condition \eqref{bound} on $\partial\widetilde\Omega$, it is easy to obtain that
		\begin{equation}\label{eq:aa4}
\nu_\ell \wedge (\nabla\wedge\mathbf{E})+\widetilde{\boldsymbol\eta} (\nu_\ell  \wedge\mathbf{E})\wedge\nu_\ell=\nu_\ell \wedge (\nabla\wedge\mathbf{\widetilde E})+\widetilde{\boldsymbol \eta}(\nu_\ell \wedge\mathbf{\widetilde E})\wedge\nu_\ell=\mathbf 0 \mbox{ on } \widetilde\Pi_\ell.
		\end{equation}
 
 We consider the following  two  separate cases, depending on the values of $\widetilde{\boldsymbol\eta}$ on $\widetilde\Pi_\ell$ associated with the edge-corner $\mathcal{E}(\widetilde\Pi_1, \widetilde\Pi_2,\bsl)$
		
			\medskip
		
\noindent{\bf Case 1.}~$\widetilde{\boldsymbol\eta}\big|_{\widetilde\Pi_\ell}=0$ or $\widetilde{\boldsymbol\eta}\big|_{\widetilde\Pi_\ell}=\infty$, $\ell=1, 2$. We only consider the case $\widetilde{\boldsymbol\eta}\big|_{\widetilde\Pi_\ell}=\infty$ and the other case can be treated in a similar manner. First, we note that one has from \eqref{eq:aa4}, 
\begin{equation}\label{eq:p1}
(\nu_\ell\wedge\mathbf{E})\wedge\nu_\ell=0\quad\mbox{on}\ \ \widetilde\Pi_\ell,\ \ell=1, 2.
\end{equation}
Let $\widehat{\Pi}_\ell$ denote the full flat extension of $\widetilde\Pi_\ell$ within $\mathbb{R}^3\backslash\overline{\Omega}$. We claim that at least one of $\widehat\Pi_\ell$ is bounded. In fact, if on the contrary that both $\widehat\Pi_1$ and $\widehat\Pi_2$ are unbounded, then one has from analytic continuation (noting that $\mathbf{E}$ is real analytic in $\mathbb{R}^3\backslash\overline{\Omega}$) and \eqref{eq:p1} that
\begin{equation}\label{eq:p2}
\lim_{|\mathbf{x}|\rightarrow\infty, \mathbf{x}\in\widehat\Pi_\ell}\left| (\nu_\ell\wedge\mathbf{E})\wedge\nu_\ell \right|=0, \ \ell=1, 2. 
\end{equation}
Using \eqref{eq:far}, we note that $\mathbf{E}^s(\mathbf{x})\rightarrow \mathbf{0}$ as $|\mathbf{x}|\rightarrow\infty$, and hence we further have from \eqref{eq:p2} that
\begin{equation}\label{eq:p3}
\lim_{|\mathbf{x}|\rightarrow\infty, \mathbf{x}\in\widehat\Pi_\ell}\left| (\nu_\ell\wedge\mathbf{E}^i)\wedge\nu_\ell \right|=0, \ \ell=1, 2, 
\end{equation} 
which together with \eqref{eq:p1n} readily implies that $|(\nu_\ell\wedge\mathbf{E})\wedge\nu_\ell|=0$, $\ell=1,2$. But this is impossible since $\nu_1$ and $\nu_2$ are linearly independent. Without loss of generality, we can assume that $\widehat\Pi_1$ is bounded. Clearly, $\widehat\Pi_1$ and part of $\partial\Omega$ form a bounded domain in $\mathbb{R}^3\backslash\overline{\Omega}$, and we denote it as $\Omega_1$. It is noted from \eqref{eq:aa4} that one has
\begin{equation}\label{eq:p4}
\nu \wedge (\nabla\wedge\mathbf{E})+\widetilde{\boldsymbol\eta} (\nu  \wedge\mathbf{E})\wedge\nu=\mathbf{0}\ \mbox{on}\ \partial\Omega_1\backslash\widehat{\Pi}_1\ \mbox{and}\ \nu \wedge (\nabla\wedge\mathbf{E})=0\ \mbox{on}\ \widehat{\Pi}_1.
\end{equation}
We next show that $\widetilde{\boldsymbol\eta}$ can only take $0$ or $\infty$ on $\partial\Omega_1\backslash\widehat{\Pi}_1$. Indeed, we assume on the contrary that there exists a nonempty open subset $\Lambda_1\subset \partial\Omega_1\backslash\widehat{\Pi}_1$ such that $\widetilde{\boldsymbol\eta} \in L^\infty(\Lambda_1)$ with $\Re(\widetilde{\boldsymbol\eta})\geq 0$ and $\Im(\widetilde{\boldsymbol\eta} )<0$, and on $(\partial\Omega_1\backslash\widehat{\Pi}_1)\backslash\overline{\Lambda_1}$, $\widetilde{\boldsymbol \eta} $ takes either $0$ or $\infty$. Noting that the Maxwell equations, namely the first two equations in \eqref{eq:forward} are satisfied in $\Omega_1$, we have from Green's formula that  
\begin{equation}\label{eq:p5}
\begin{split}
\mathbf{i}k\int_{\Omega_1} |\mathbf{H}|^2=&\int_{\Omega_1}(\nabla\wedge\mathbf{E})\cdot\overline{\mathbf{H}}=\int_{\Omega_1}\mathbf{E}\cdot(\nabla\wedge\overline{\mathbf{H}})+\int_{\Omega_1} (\overline{\mathbf{H}}\wedge\nu)\cdot\mathbf{E}\\
=&\mathbf{i}k\int_{\Omega_1}|\mathbf{E}|^2+\int_{\partial\Omega_1}(\overline{\mathbf{H}}\wedge\nu)\cdot\mathbf{E}=\mathbf{i}k\int_{\Omega_1}|\mathbf{E}|^2+\int_{\Lambda_1}(\overline{\mathbf{H}}\wedge\nu)\cdot\mathbf{E},
\end{split}
\end{equation}
where in deriving the last equality, we make use of the fact that $(\overline{\mathbf{H}}\wedge\nu)\cdot\mathbf{E}=0$ on $\partial\Omega_1\backslash\overline{\Lambda_1}$. Using the fact that $\Im(\widetilde{\boldsymbol\eta})<0$ on $\Lambda_1$, one can readily infer from \eqref{eq:p5} that $\nu\wedge\mathbf{E}|_{\Lambda_1}=\mathbf{0}$, which together with \eqref{eq:p4} further implies that $\nu\wedge\mathbf{H}|_{\Lambda_1}=0$. Hence, by the Holmgren's uniqueness principle (cf. \cite{CK}), one has that 
\begin{equation}\label{eq:aa51}
{\mathbf E}(\mathbf x; k, \mathbf{d},\mathbf{p}) =\mathbf{0} \mbox{ in } \mathbb{R}^3\backslash\overline{\Omega},
\end{equation}
which in particular yields that
	\begin{equation}\label{eq:aa6}
	\lim_{|\mathbf x|\rightarrow\infty} \left|{\mathbf E}(\mathbf x; k, \mathbf{d},\mathbf{p})\right|={\mathbf 0}.
	\end{equation}
	But this contradicts to the fact that follows from \eqref{eq:far}:
	\begin{equation}\label{eq:aa61}
	\lim_{|\mathbf x|\rightarrow\infty} \left|{\mathbf E}(\mathbf x; k, \mathbf{d},\mathbf{p})\right|=\lim_{|\mathbf x|\rightarrow\infty} \left|\mathbf{p}{e}^{\mathbf{i} k \mathbf{x} \cdot \mathbf{d}}+\mathbf{E}^{s}(\mathbf{x} ;  k, \mathbf{d},  \mathbf{p})\right|=|\mathbf{p}| \neq 0. 
	\end{equation}
Hence, we actually can find a polyhedral domain $\Omega_1\subset\mathbb{R}^3\backslash\Omega$ such that one has on $\partial\Omega_1$, either $\nu\wedge\mathbf{E}=0$ or $\nu\wedge\mathbf{H}=0$. The situation is reduced to that was considered in \cite{LiuA} and \cite{Liu09}. It is noted that in \cite{Liu09}, two far-field patterns are used to handle the above situation. However, the pair of incident fields $(\mathbf{E}^i, \mathbf{H}^i)$ in \eqref{eq:p1n} in our current case is chosen slightly different from that in \cite{Liu09}, which enables one to apply the path-argument from \cite{LiuA} to arrive at a contradiction by starting from $\Omega_1$.

\medskip
	
	\noindent {\bf Case 2.}~$\widetilde{\boldsymbol \eta}\big|_{\widetilde\Pi_\ell}\in\mathcal{A}(\bsl)$, $\ell=1, 2$; or one of $\widetilde{\boldsymbol \eta}\big|_{\widetilde\Pi_\ell}$ belongs to $\mathcal{A}(\bsl)$, and the other one takes $0$ or $\infty$; or one of $\widetilde{\boldsymbol \eta}\big|_{\widetilde\Pi_\ell}$ is $0$ and the other one is $\infty$. This falls exactly to the situation considered in Theorem~\ref{ir-2nodal}. By the irrationality of the edge-corner as well as the strong uniqueness continuation principle in Theorem~\ref{ir-2nodal}, we readily \eqref{eq:aa51}, which again leads to the contradiction \eqref{eq:aa61}.

\medskip

It remains to prove \eqref{eta}, and we establish it by contradiction.
	Let $\Gamma\subset \partial\Omega\cap\partial\widetilde\Omega\cap\partial\mathbf{G}$ be an open subset such that $\boldsymbol \eta\neq \widetilde{\boldsymbol\eta}$ on $\Gamma$. By taking a smaller subset of $\Gamma$ if necessary, we can assume that $\eta$ (respectively
	$\widetilde\eta$) is either $L^\infty$ or $0$ or $\infty$ on $\Gamma$. Clearly, one has $\mathbf{E}=\widetilde {\mathbf E}$ in $\mathbf{G}$. Hence it holds that
	\begin{equation*}\label{eq:bb6}
	( \nu  \wedge \mathbf{E}) \wedge \nu=( \nu \wedge \widetilde{ \mathbf{E}} ) \wedge \nu \mbox{ and } \nu \wedge (\nabla \wedge \mathbf{E}  ) =\nu \wedge (\nabla \wedge \widetilde{ \mathbf{E}}   )\quad\mbox{on}\ \ \Gamma.
	\end{equation*}
	and
	$$
	\nu \wedge ( \nabla \wedge \mathbf{E})  +\boldsymbol \eta( \nu  \wedge \mathbf{E}) \wedge \nu =\mathbf {0},\ \ \nu \wedge ( \nabla \wedge \widetilde{\mathbf{E}})  +\widetilde{\boldsymbol \eta} ( \nu  \wedge \widetilde {\mathbf{E}}) \wedge \nu =\mathbf{0} \quad\mbox{on}\ \ \Gamma.
	$$
	Combining with the assumption that $\boldsymbol \eta\neq\widetilde{\boldsymbol \eta}$ on $\mathcal{E}$,  we can directly deduce that
	\[
	\nu\wedge\mathbf{E}=\nu\wedge\mathbf{H}=0\quad\mbox{on}\ \Gamma,
	\]
	which in turn yields by the Holmgren's uniqueness principle (cf. \cite{CK}) that $\mathbf{E} =\mathbf{0}$ in $\mathbb{R}^3\backslash\Omega$. Therefore, we arrive at the same contradiction as that in \eqref{eq:aa6} and \eqref{eq:aa61}, which readily proves \eqref{eta}.
	
	The proof is complete. 
\end{proof}

 It is recalled that the convex hull of $\Omega$, denoted by $\mathcal{CH}(\Omega)$, is the smallest convex set that contains $\Omega$. As a direct consequence of Theorem \ref{thm:uniqueness1}, we next show that the convex hull of a complex irrational obstacle can be uniquely determined by one far-field measurement. Furthermore, the boundary impedance  parameter $\eta$ can be partially identified as well.  In fact we have
\begin{corollary}\label{co:84}
Consider a fixed triplet of $k\in\mathbb{R}_+$, $\mathbf{d}\in\mathbb{S}^2$ and $\mathbf{p}\in\mathbb{R}^3\backslash\{\mathbf{0}\}$.
	Let $(\Omega, \eta)$ and $(\widetilde\Omega, \widetilde\eta)$ be
	two admissible complex irrational obstacles, with $\mathbf{E}_\infty$ and $\widetilde{\mathbf{E}}_\infty$ being
	their corresponding far-field patterns.
	If $\mathbf{E}_\infty$ and $\widetilde{\mathbf{E}}_\infty$ satisfy \eqref{eq:cond1}, then  one has that
\begin{equation}\label{eq:cond2}
\mathcal{CH}(\Omega)=\mathcal{CH}(\widetilde\Omega):=\Sigma,
\end{equation}
and
\begin{equation}\label{eq:cond3}
\boldsymbol \eta=\widetilde{\boldsymbol \eta}\ \ \mbox{on}\ \ \partial\Omega\cap\partial\widetilde\Omega\cap \partial\Sigma.
\end{equation}	
\end{corollary}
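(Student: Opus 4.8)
The plan is to deduce Corollary~\ref{co:84} directly from Theorem~\ref{thm:uniqueness1}, whose two conclusions — that $(\partial\Omega\setminus\partial\overline{\widetilde\Omega})\cup(\partial\widetilde\Omega\setminus\partial\overline{\Omega})$ carries no edge-corner on $\partial\mathbf{G}$, and that $\boldsymbol\eta=\widetilde{\boldsymbol\eta}$ on $\partial\Omega\cap\partial\widetilde\Omega\cap\partial\mathbf{G}$ — already supply all the analytic input we need. What remains is purely geometric: we must upgrade the ``no edge-corner in the symmetric difference'' statement to the convex-hull identity \eqref{eq:cond2}, and then localise the impedance identity \eqref{eta} to $\partial\Sigma$ to obtain \eqref{eq:cond3}. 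Throughout, $\mathbf{G}$ denotes, as in Theorem~\ref{thm:uniqueness1}, the unbounded connected component of $\mathbb{R}^3\setminus\overline{\Omega\cup\widetilde\Omega}$.

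To establish \eqref{eq:cond2} I would argue by contradiction. Set $\Xi:=\mathcal{CH}(\Omega\cup\widetilde\Omega)$ and suppose $\mathcal{CH}(\Omega)\neq\mathcal{CH}(\widetilde\Omega)$. Every extreme point of $\Xi$ lies in the compact set $\overline\Omega\cup\overline{\widetilde\Omega}$ and is therefore a vertex of $\Omega$ or of $\widetilde\Omega$; hence if all vertices of $\Xi$ lay in $\mathcal{CH}(\Omega)$ we would get $\Xi=\mathcal{CH}(\Omega)$, and symmetrically for $\widetilde\Omega$, so the assumption forces the existence of a vertex $\mathbf{v}$ of $\Xi$ lying outside one of the two hulls, say $\mathbf{v}\notin\mathcal{CH}(\Omega)\supseteq\overline\Omega$. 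Then $\mathbf{v}$ is a vertex of the polyhedral boundary $\partial\widetilde\Omega$ with $\mathbf{v}\notin\overline\Omega$, so a small ball $B_h(\mathbf{v})$ can be chosen disjoint from the closed set $\mathcal{CH}(\Omega)\supseteq\overline\Omega$; inside this ball $\partial\widetilde\Omega$ lies entirely in $\partial\widetilde\Omega\setminus\partial\overline\Omega$, and the part of $\partial\widetilde\Omega$ exposed to the exterior is contained in $\partial\mathbf{G}$ since $B_h(\mathbf{v})$ meets $\Omega$ nowhere. Because $\mathbf{v}$ is a vertex of $\Xi$ it is exposed to $\mathbf{G}$, so at least one edge of $\widetilde\Omega$ issuing from $\mathbf{v}$, together with its two adjacent faces, furnishes an edge-corner of $\widetilde\Omega$ that lies on $\partial\mathbf{G}$ and belongs to $\partial\widetilde\Omega\setminus\partial\overline\Omega$. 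This contradicts the first conclusion of Theorem~\ref{thm:uniqueness1}, and therefore $\mathcal{CH}(\Omega)=\mathcal{CH}(\widetilde\Omega)=:\Sigma$.

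With \eqref{eq:cond2} in hand, \eqref{eq:cond3} follows by a short observation. Every point of $\partial\Omega\cap\partial\Sigma$ lies on the boundary of the bounded convex set $\Sigma$, and since $\Omega\cup\widetilde\Omega\subset\Sigma$ the connected unbounded set $\mathbb{R}^3\setminus\overline\Sigma$ is contained in $\mathbf{G}$; thus arbitrarily close to such a point there are points of $\mathbf{G}$, whence $\partial\Omega\cap\partial\Sigma\subset\partial\mathbf{G}$ and likewise $\partial\widetilde\Omega\cap\partial\Sigma\subset\partial\mathbf{G}$. Consequently $\partial\Omega\cap\partial\widetilde\Omega\cap\partial\Sigma\subset\partial\Omega\cap\partial\widetilde\Omega\cap\partial\mathbf{G}$, and the impedance identity \eqref{eta} furnished by Theorem~\ref{thm:uniqueness1} restricts to give $\boldsymbol\eta=\widetilde{\boldsymbol\eta}$ on $\partial\Omega\cap\partial\widetilde\Omega\cap\partial\Sigma$, which is exactly \eqref{eq:cond3}.

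The main obstacle is the geometric step in the second paragraph: showing that a convex-hull vertex of $\Xi$ lying outside one obstacle necessarily produces a \emph{genuine} edge-corner — in the sense of Section~\ref{sec:Intro}, i.e. two flat faces meeting along a segment — of the polyhedron $\widetilde\Omega$ that simultaneously sits on $\partial\mathbf{G}$ and inside the symmetric difference. Care is needed because an edge of the convex hull $\Xi$ at $\mathbf{v}$ need not be an edge of $\widetilde\Omega$; what one must invoke instead is that $\mathbf{v}$, being an extreme point of $\Xi$ and a vertex of $\widetilde\Omega$, has its incident polyhedral faces locally exposed to $\mathbf{G}$ within $B_h(\mathbf{v})$, so that at least one pair of adjacent faces of $\widetilde\Omega$ meeting along an edge through $\mathbf{v}$ realises the required edge-corner. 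Once this local exposure is justified, the contradiction with Theorem~\ref{thm:uniqueness1} and hence the whole corollary are immediate.
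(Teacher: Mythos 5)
Your route is the intended one: the paper in fact states Corollary~\ref{co:84} with no written proof at all (it is offered as a ``direct consequence'' of Theorem~\ref{thm:uniqueness1}), so what you supply is precisely the geometry the paper leaves implicit. Your derivation of \eqref{eq:cond3} is complete and correct: since $\mathbb{R}^3\setminus\Sigma$ is connected, unbounded and disjoint from $\overline{\Omega\cup\widetilde\Omega}$, it lies in $\mathbf{G}$, every point of $\partial\Sigma$ is a limit of such points, hence $\partial\Omega\cap\partial\widetilde\Omega\cap\partial\Sigma\subseteq\partial\mathbf{G}$ and \eqref{eta} localises to \eqref{eq:cond3}. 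The reduction of \eqref{eq:cond2} to producing a forbidden edge-corner is also sound, including the extreme-point dichotomy that yields a vertex $\mathbf{v}$ of (say) $\widetilde\Omega$ with $\mathbf{v}\notin\mathcal{CH}(\Omega)$.

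The one genuine gap is the step you flag yourself: why the faces of $\widetilde\Omega$ incident to $\mathbf{v}$ are actually on $\partial\mathbf{G}$, rather than bounding some pocket of $\mathbb{R}^3\setminus\overline{\widetilde\Omega}$ not connected to infinity. This closes as follows, using that the obstacles are Lipschitz polyhedra (the standing assumption of the scattering problem), so $\partial\widetilde\Omega$ is a $2$-manifold. Since $\mathbf{v}$ is a vertex of the polytope $\Xi$, it is an exposed point: there is a plane $H\ni\mathbf{v}$ with $\Xi\cap H=\{\mathbf{v}\}$, and the open half-space $H^+$ not meeting $\Xi$ is connected, unbounded and disjoint from $\overline{\Omega\cup\widetilde\Omega}$, whence $H^+\subseteq\mathbf{G}$. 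Choose $\epsilon>0$ so small that $B_\epsilon(\mathbf{v})\cap\mathcal{CH}(\Omega)=\emptyset$ and that $\overline{\widetilde\Omega}\cap B_\epsilon(\mathbf{v})$ coincides with the cone over the vertex link of $\mathbf{v}$. Because $\partial\widetilde\Omega$ is a manifold, that link is bounded by a simple closed spherical polygon, so its complement in $\mathbb{S}^2$ is a connected Jordan domain; hence $U:=B_\epsilon(\mathbf{v})\setminus\overline{\widetilde\Omega}$, a punctured cone over that domain, is connected. As $U$ meets $H^+\cap B_\epsilon(\mathbf{v})\subseteq\mathbf{G}$ and $U\subseteq\mathbb{R}^3\setminus\overline{\Omega\cup\widetilde\Omega}$, we get $U\subseteq\mathbf{G}$, and therefore every point of $\partial\widetilde\Omega\cap B_\epsilon(\mathbf{v})$ lies on $\partial\mathbf{G}$. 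Now take any edge of $\widetilde\Omega$ through $\mathbf{v}$, a point $\mathbf{x}_c$ in its relative interior with $|\mathbf{x}_c-\mathbf{v}|<\epsilon/2$, and $\delta>0$ so small that $B_\delta(\mathbf{x}_c)\subseteq B_\epsilon(\mathbf{v})$ meets only the two faces adjacent to that edge: this is an edge-corner of $\partial\widetilde\Omega\setminus\partial\overline{\Omega}$ on $\partial\mathbf{G}$, with dihedral angle automatically in $(0,2\pi)\setminus\{\pi\}$, contradicting Theorem~\ref{thm:uniqueness1}. With this paragraph inserted, your proof is complete; note also that your weaker claim (``at least one pair of adjacent faces'') is superseded --- the connectivity argument shows \emph{every} edge through $\mathbf{v}$ works.
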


Corollary~\ref{co:84} implies that if the underlying polyhedral obstacle is convex, then one can uniquely determine the obstacle as well as its boundary impedance by a single far-field pattern.  As a further application of the UCP results established in this work, we consider the unique determination of a rather general class of non-convex obstacles. To that end, we first introduce the aforesaid class of non-convex obstacles.  

In the sequel, we denote by ${\boldsymbol  P}_{S}(\mathbf{x})$ the projection of a point $\mathbf{x}\in\mathbb{R}^3$ onto a set $S$. Let $\partial (\mathcal{  CH}(\Omega
	))=\{\Sigma_\ell~|\ell=1,\ldots, N\}$, where $\Sigma_\ell$, $\ell=1,\ldots, N$ are the finitely many faces of $\mathcal{  CH}(\Omega
	)$. Let $\mathcal{V}(\Omega)$ and $\mathcal{V}(\mathcal{CH}(\Omega))$ denote, respectively, the sets of vertices of $\Omega$ and $\mathcal{CH}(\Omega)$. It is known that $\mathcal{V}(\mathcal{CH}(\Omega))\subset\mathcal{V}(\Omega)$. For any vertex $\mathbf{v} \in  \mathcal{V}(\Omega) \backslash \mathcal{V}(\mathcal{CH}(\Omega))$, we consider the projection, ${\boldsymbol P}_{\Sigma_j} (\mathbf{v})$, where $\Sigma_j\subset\partial(\mathcal{CH}(\Omega))$ is a face. It is assumed that there exists at least one $\Sigma_j$ such that $\mathbf{v}-{\boldsymbol P}_{\Sigma_j}(\mathbf{v})\subset\mathbb{R}^3\backslash\Omega$. Then for a face $\Sigma_l\subset\partial(\mathcal{CH}(\Omega))$ we say that $\mathbf{v}\vdash\Sigma_l$ if
 \begin{equation}\label{eq:projection}
 \mathbf{v}-{\boldsymbol P}_{\Sigma_\ell}(\mathbf{v})=\argmin_{\mathbf{v}-{\boldsymbol P}_{\Sigma_j} (\mathbf{v}) \in \mathbb R^3 \backslash \Omega, \forall \Sigma_j\subset\partial(\mathcal{CH}(\Omega))  } \left|\mathbf{v}-{\boldsymbol P}_{\Sigma_j} (\mathbf{v})  \right|.
\end{equation}

\begin{definition}\label{def:85}
	Let $\Omega$ be an admissible polyhedral obstacle, and let $\Sigma_l$ be a given face of $\partial(\mathcal{CH}(\Omega))$, and $\mathcal{V}_\mathcal{C}$ be a given set of finitely many, discrete and distinct points on $\Sigma_l$. $\Omega$ is said to be uniformly concave with respect to $\mathcal{V}_\mathcal{C}$ if $\forall\mathbf{v}\in\mathcal{V}(\Omega)\backslash\mathcal{V}(\mathcal{CH}(\Omega))$, $\mathbf{v}\vdash\Sigma_l$ and 
	\[
	\{{\boldsymbol P}_{\Sigma_l}(\mathbf{v})~|~\mathbf{v}\in\mathcal{V}(\Omega)\backslash\mathcal{V}(\mathcal{CH}(\Omega))\}=\mathcal{V}_\mathcal{C}. 
	\]
\end{definition}

\begin{figure}[htbp]
	\centering
\vspace*{-1cm}	\includegraphics[width=0.3\linewidth]{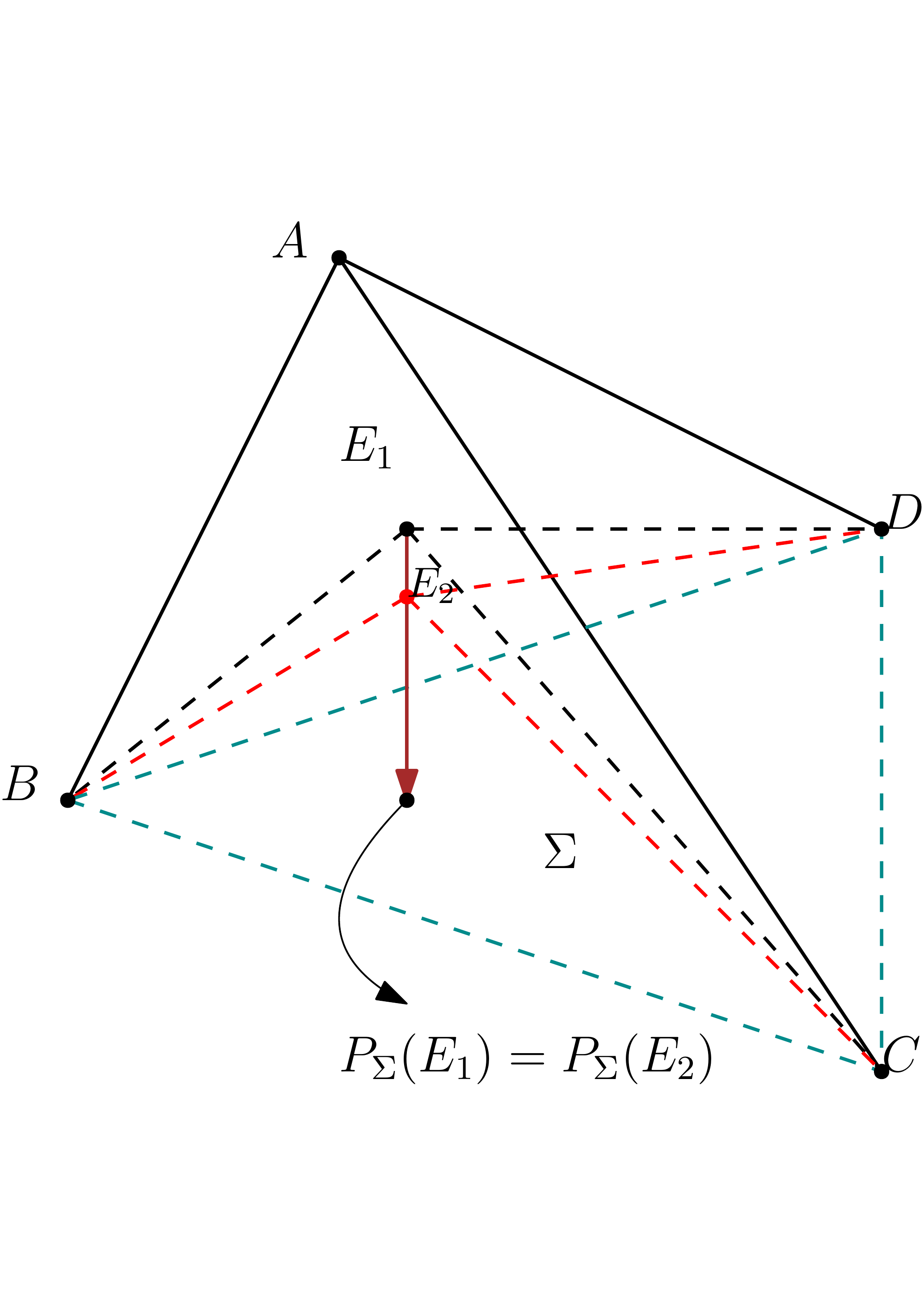}\\[-25pt]
	\caption{Schematic illustration of two different uniformly concave hexahedrons $ABCDE_1$ and $ABCDE_2$ with $\mathcal{CH}(ABCDE_1)$=$\mathcal{CH}(ABCDE_2)=ABCD$.}
	\label{fig:convex}
\end{figure}

As a simple illustrating example of Definition \ref{def:85}, we consider two different uniformly concave hexahedrons $\Omega_1:= ABCDE_1$ and $\Omega_2:=ABCDE_2$ that are shown in Figure \ref{fig:convex}. It is easy to see that $\Omega_1$ and $\Omega_2$  has the same convex hull, which is the tetrahedron $ABCD$. The vertexes $E_1$ and $E_2$ corresponding to $\Omega_1$ and $\Omega_2$ have the same projecting point on the face $\Sigma:=BCD$ of the convex hull $ABCD$. It is pointed out that the vertex corner $\mathcal{V}(BE_2C,CE_2D,BE_2D, E_2)\in \partial \mathbf G$, where $BE_2C,CE_2D,BE_2D$ are faces of $\Omega_2$ and $\mathbf{G}=\mathbb R^3 \backslash (\Omega_1 \cup \Omega_2
	)$.

\begin{theorem}\label{thm:uniqueness2}
	Consider a fixed triplet of $k\in\mathbb{R}_+$, $\mathbf{d}\in\mathbb{S}^2$ and $\mathbf{p}\in\mathbb{R}^3\backslash\{\mathbf{0}\}$.
	Let $(\Omega, \eta)$ and $(\widetilde\Omega, \widetilde\eta)$ be
	two uniformly concave irrational admissible polyhedral obstacles with respect to the set $\mathcal{V}_{\mathcal C}$, with $\mathbf{E}_\infty$ and $\widetilde{\mathbf{E}}_\infty$ being
	their corresponding far-field patterns.
	If $\mathbf{E}_\infty$ and $\widetilde{\mathbf{E}}_\infty$ satisfy \eqref{eq:cond1}, then
	$$
	\Omega =\widetilde \Omega \mbox{ and } \boldsymbol \eta=\widetilde{\boldsymbol \eta}.
	$$
\end{theorem}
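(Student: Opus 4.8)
The plan is to combine the convex-hull identification of Corollary~\ref{co:84} with the edge-corner exclusion of Theorem~\ref{thm:uniqueness1}, and then to exploit the rigid combinatorial structure forced by uniform concavity with respect to the common set $\mathcal{V}_{\mathcal C}$. First I would invoke Corollary~\ref{co:84}: since $\mathbf{E}_\infty=\widetilde{\mathbf E}_\infty$ under \eqref{eq:cond1}, we have $\mathcal{CH}(\Omega)=\mathcal{CH}(\widetilde\Omega)=:\Sigma$ and $\boldsymbol\eta=\widetilde{\boldsymbol\eta}$ on $\partial\Omega\cap\partial\widetilde\Omega\cap\partial\Sigma$. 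By the uniform concavity hypothesis, every face of $\partial\Sigma$ other than the distinguished face $\Sigma_l$ is simultaneously a face of $\partial\Omega$ and of $\partial\widetilde\Omega$, so that $\Omega$ and $\widetilde\Omega$ can differ only through their indentations above $\Sigma_l$, each of which is a polyhedral dent whose reflex vertices project onto the common finite set $\mathcal{V}_{\mathcal C}\subset\Sigma_l$ via the relation $\vdash$ of \eqref{eq:projection}.

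Next I would argue by contradiction, assuming $\Omega\neq\widetilde\Omega$. Writing $\mathbf{G}$ for the unbounded component of $\mathbb{R}^3\setminus\overline{(\Omega\cup\widetilde\Omega)}$, the portion of $\partial\mathbf{G}$ lying over $\Sigma_l$ is the lower envelope of the two indentation surfaces $S_\Omega$ and $S_{\widetilde\Omega}$, since a point over $\Sigma_l$ lies in $\mathbf{G}$ precisely when it lies in the indentation void of \emph{both} obstacles. If the two dents are not identical, then at some $\mathbf{c}\in\mathcal{V}_{\mathcal C}$ the corresponding reflex apexes $\mathbf{v}\in\mathcal{V}(\Omega)$ and $\widetilde{\mathbf{v}}\in\mathcal{V}(\widetilde\Omega)$ sit at different heights on the normal line to $\Sigma_l$ through $\mathbf{c}$; say $\widetilde{\mathbf{v}}$ is the shallower one. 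Then $\widetilde{\mathbf{v}}$ belongs to the lower envelope, hence to $\partial\mathbf{G}$, while lying strictly inside the void of $\Omega$, so that $\widetilde{\mathbf{v}}\notin\overline{\Omega}$ and a neighbourhood of $\widetilde{\mathbf{v}}$ on the faces of $\widetilde\Omega$ meeting at $\widetilde{\mathbf{v}}$ lies in $\partial\widetilde\Omega\setminus\partial\overline{\Omega}$. In particular the edges emanating from $\widetilde{\mathbf{v}}$ furnish an edge-corner of $(\partial\Omega\setminus\partial\overline{\widetilde\Omega})\cup(\partial\widetilde\Omega\setminus\partial\overline{\Omega})$ lying on $\partial\mathbf{G}$, which is precisely what Theorem~\ref{thm:uniqueness1} forbids. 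This contradiction forces all matching reflex apexes to coincide; since the convex hulls, the non-$\Sigma_l$ faces, and the projection data $\mathcal{V}_{\mathcal C}$ all agree, the two indentations coincide and therefore $\Omega=\widetilde\Omega$.

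Finally, with $\Omega=\widetilde\Omega$ in hand, $\partial\Omega=\partial\widetilde\Omega$ and by Rellich's theorem $\mathbf{E}=\widetilde{\mathbf E}$ throughout $\mathbf{G}=\mathbb{R}^3\setminus\overline{\Omega}$. The identity $\boldsymbol\eta=\widetilde{\boldsymbol\eta}$ then follows by repeating the final contradiction argument in the proof of Theorem~\ref{thm:uniqueness1}: on any open subset of $\partial\Omega$ where $\boldsymbol\eta\neq\widetilde{\boldsymbol\eta}$, subtracting the two generalized impedance conditions forces $\nu\wedge\mathbf{E}=\nu\wedge\mathbf{H}=\mathbf{0}$ there, whence Holmgren's principle yields $\mathbf{E}\equiv\mathbf{0}$ in $\mathbb{R}^3\setminus\overline{\Omega}$, contradicting the asymptotics $|\mathbf{E}|\to|\mathbf{p}|\neq0$ recorded in \eqref{eq:aa61}.

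I expect the main obstacle to be the second paragraph: making rigorous the claim that any genuine discrepancy between the two uniformly concave obstacles must expose a reflex vertex, and hence an edge-corner, of one obstacle on $\partial\mathbf{G}$. One has to rule out the competing scenario in which the two indentation surfaces differ only along creases arising from transversal intersections of a face of $\Omega$ with a face of $\widetilde\Omega$; it is exactly here that uniform concavity with respect to a \emph{common} finite projection set $\mathcal{V}_{\mathcal C}$ is indispensable, as it pins the reflex vertices over the same base points and permits an apex-by-apex comparison of the two dents rather than an analysis of an arbitrary polyhedral mismatch.
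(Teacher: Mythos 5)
Your proposal follows the paper's skeleton — Corollary~\ref{co:84} to identify the convex hulls, a contradiction argument that exhibits an edge-corner of $(\partial\Omega\setminus\partial\overline{\widetilde\Omega})\cup(\partial\widetilde\Omega\setminus\partial\overline{\Omega})$ on $\partial\mathbf{G}$ so as to contradict Theorem~\ref{thm:uniqueness1}, and a Holmgren-type argument for $\boldsymbol\eta=\widetilde{\boldsymbol\eta}$ once $\Omega=\widetilde\Omega$ is known; the first and last steps are sound and match the paper. The gap is in your second paragraph, and you have in fact flagged it yourself in your closing paragraph without repairing it: your contradiction hinges on the claim that if the two indentations are not identical, then at some $\mathbf{c}\in\mathcal{V}_{\mathcal{C}}$ the two reflex apexes over $\mathbf{c}$ sit at different heights. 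This claim is unproven and is false in general. Uniform concavity with respect to a \emph{common} $\mathcal{V}_{\mathcal{C}}$ pins down vertices (base vertices on $\Sigma_l$ and the projections of the apexes), not faces: two polyhedral dents can have exactly the same vertex set, apexes included, and still be different polyhedra because the faces and edges joining those vertices are arranged differently. In that scenario your lower-envelope, apex-by-apex comparison produces no point at which one boundary lies strictly outside the other obstacle, hence no edge-corner on $\partial\mathbf{G}$ and no contradiction; so the proposal is incomplete exactly at the decisive geometric step.

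The paper's proof is organized so as not to rest on apex heights, and this is worth internalizing. Since $\Omega\neq\widetilde\Omega$, there exists an \emph{edge} $\bsl$ belonging to one boundary and not the other, say $\bsl\subset\partial\widetilde\Omega\setminus\partial\Omega$; any discrepancy — including a pure face/crease discrepancy with coincident vertices — is caught at this stage. The case analysis is then on the endpoints of $\bsl$: if both endpoints are vertices of $\mathcal{CH}(\widetilde\Omega)$, the edge itself lies on $\partial\mathbf{G}\cap\partial\widetilde\Omega$ and directly furnishes the forbidden edge-corner (a configuration your apex comparison never sees); only when an endpoint is a reflex vertex does the paper invoke Definition~\ref{def:85}, the relation \eqref{eq:projection} and the distance comparison to the face $\Sigma_\ell$ — which is the analogue of your height argument. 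To complete your proposal you would need to replace the apex-height dichotomy by such an edge-first argument, or else actually prove the implication you are implicitly assuming, namely that equal hulls, equal $\mathcal{V}_{\mathcal{C}}$ and equal apex positions force $\Omega=\widetilde\Omega$; as written, your structural claim that the obstacles "can differ only through their indentations above $\Sigma_l$" with all other hull faces common is likewise asserted rather than derived, whereas the paper never needs it.
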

\begin{proof}
	We prove this theorem by contradiction. Assume that $\Omega \neq \widetilde \Omega $ but \eqref{eq:cond1} is still fulfilled. From Corollary \ref{co:84}, we have  $\mathcal{CH}(\Omega)= \mathcal{CH}(\widetilde{ \Omega} )$, which implies that the vertices of  $\Omega$ contributing to $\mathcal{CH}(\Omega)$ are the same as the corresponding vertices of $\widetilde \Omega$ contributing to  $\mathcal{CH}(\widetilde{ \Omega} )$. We shall prove that there must exit an edge-corner $\mathcal{E}(\Pi_1,\Pi_2,\mathbf{x}_c) \in \partial \mathbf G$, where   $\mathbf{G}$ is
	the unbounded connected component of $\mathbb{R}^3\backslash\overline{(\Omega\cup\widetilde\Omega)}$. Since $\Omega \neq \widetilde \Omega $, there exits an edge $\bsl \subset \partial \Omega \backslash \partial \widetilde \Omega$ or  $\bsl \subset \partial \widetilde \Omega \backslash  \partial \Omega$. Without loss of generality, we assume that $\bsl \subset  \partial \widetilde \Omega \backslash \partial \Omega  $.
	In the sequel, we let ${\mathbf a}_{\boldsymbol l } $ and ${\mathbf b}_{\boldsymbol{l} }$ denote the two vertices of the line segment $\bsl  $.  We divide our remaining proof into two  separate cases.
		
			\medskip
	
	\noindent {\bf Case 1.}~Suppose that ${\mathbf a}_{\boldsymbol l } \in \mathcal{V}(\mathcal{CH}(\widetilde \Omega))$ and ${\mathbf b}_{\boldsymbol{l} } \in \mathcal{V}(\mathcal{CH}(\widetilde \Omega)) $. Therefore, $\bsl \subset \partial \mathbf{G} \cap \partial \widetilde{ \Omega}$.  There exits a point $\mathbf{x}_c \in \bsl$ and a sufficient small $h\in \mathbb R_+$ such that
	\begin{equation*}\label{eq:aa2}
		B_h(\mathbf x_c)\cap \partial\widetilde\Omega=\widetilde\Pi_\ell,\quad \ell=1,2, 
		\end{equation*}
		where $\widetilde\Pi_\ell$ are two flat subsets lying on the  faces of $\widetilde\Omega$ that intersect at $\mathbf x_c$. Clearly, $\mathbf{x}_c \in \bsl$ is an edge-corner point.
		
		\medskip
	
	\noindent {\bf Case 2.}~Suppose that there exits at least one of ${\mathbf a}_{\boldsymbol l }$ and ${\mathbf b}_{\boldsymbol l }$ belonging to  $ \mathcal{V} (\widetilde \Omega )\backslash \mathcal{V}(\mathcal{CH}(\widetilde \Omega)) $; namely, $ \mathbf{x}_c\in \mathcal{V} (\widetilde \Omega )\backslash \mathcal{V}(\mathcal{CH}(\widetilde \Omega)) $, where $ \mathbf{x}_c$ could be either ${\mathbf a}_{\boldsymbol l }$ or ${\mathbf b}_{\boldsymbol l }$.   Since $\Omega$ and $\widetilde\Omega$ are uniformly concave admissible polyhedral obstacles with respect to the set $\mathcal{V}_{\mathcal C}$, there exits a face $\Sigma_\ell \Subset  \partial(\mathcal{CH}(\Omega))$ such that $ \mathbf{x}_c \vdash \Sigma_\ell  $ and $  \mathcal{V}_{\mathcal C} \Subset \Sigma_\ell$. Furthermore, we know that there exits a vertex ${\mathbf x}_{c,\Omega } \in  \mathcal{V}(\Omega) \backslash \mathcal{V}(\mathcal{CH}(\Omega)  $ such that
	$$
	{\mathbf x}_{c,\Omega }\vdash \Sigma_\ell, \quad  \boldsymbol{ P}_{\Sigma_\ell } \left({\mathbf x}_{c,\Omega }\right)=\boldsymbol{ P}_{\Sigma_\ell } \left({\mathbf x}_{c}\right) \in \mathcal{V}_{\mathcal C}. 
	$$
Since ${\mathbf x}_{c,\Omega }$ and ${\mathbf x}_{c}$ are distinct, it holds that
	$$
	{\mathrm d}\left(  {\mathbf x}_{c }, \Sigma_\ell  \right)\neq  {\mathrm d}\left(  {\mathbf x}_{c,\Omega }, \Sigma_\ell  \right), 	
	$$
	where $ \mathrm {d}\left(  {\mathbf x}_{c }, \Sigma_\ell  \right) $ is the distance  between $ {\mathbf x}_{c }$ and $\Sigma_\ell$.  Without loss of generality, we may assume that ${\mathrm d}\left(  {\mathbf x}_{c }, \Sigma_\ell  \right)< {\mathrm d}\left(  {\mathbf x}_{c,\Omega }, \Sigma_\ell  \right)$.   Hence, one can conclude that
	$$
	 {\mathbf x}_{c } \in \partial \mathbf G,
	$$
	which also indicates that $\mathbf{x}_c$ lies outside $\Omega$. Let $h\in\mathbb{R}_+$ be sufficiently small such that $B_h({\mathbf x}_c)\Subset\mathbb{R}^2\backslash\overline \Omega $, then due to the fact that  $\mathcal{V}_{\mathcal C}$ is discrete and distinct we can conclude that
			\begin{equation*}\label{eq:aa2}
		B_h(\mathbf x_c)\cap \partial\widetilde\Omega=\widetilde \Pi_\ell,\quad \ell=1,2, 
		\end{equation*}
		where $\widetilde \Pi_\ell$ are two  plane cells lying on the  faces of $\widetilde\Omega$ that intersect at $\mathbf x_c$.


The remaining proof is similar to the that of Theorem \ref{thm:uniqueness1},  which is omitted.
	 \end{proof}
	 
Finally, we remark that in this section, we only consider the case that the underlying obstacle is irrational in order to make use of the strong unique continuation principle in Theorem~\ref{ir-2nodal}. That is, in the contradiction argument in proving Theorems~\ref{thm:uniqueness1} and \ref{thm:uniqueness2}, one can find an edge-corner that can lead to the vanishing of the total wave field outside the obstacle by the strong unique continuation principle in Theorem~\ref{ir-2nodal}. However, we would like to emphasize that the same argument would work for the case that the underlying obstacle is of a general polyhedral shape, subject to a some slight modification. In fact, in such a case, it may happen that the edge-corner in the contradiction argument is rational, and hence instead of Theorem~\ref{ir-2nodal}, one would need to make use of the finite vanishing order results in   
Theorems~\ref{th:two imp}, \ref{thm:pec pmc}, \ref{thm:imp pec} and \ref{thm:imp pmc} to obtain that the total wave field is ``small" around the edge-corner (compared to the totally vanishing in the irrational case). Hence, a contradiction can be obtained if one requires that the total wave field outside the obstacle is everywhere ``big", which can be fulfilled in certain scenarios of practical interest, see e.g. \cite{CDL1}. Nevertheless, we shall not explore this direction any further in this paper. 

\subsection{Information-encoding for inverse problems and generalised Holmgren's uniqueness principle }

We recall the classical Holmgren's theorem for an elliptic PDO $\mathcal{P}$ with real-analytic coefficients (cf. \cite{TF}). If $\mathcal{P}\mathbf{u}$ is real analytic in a connected open neighbourhood of $\Omega$, then $\mathbf{u}$ is also real-analytic. The Holmgren's theorem applied to $\mathbf{u}=(\mathbf{E},\mathbf{H})$ in \eqref{eq:eig}, we immediately see that $(\mathbf{E},\mathbf{H})$ is real-analytic in $\Omega$. Let $\Gamma$ be an analytic surface in $\Omega$. Suppose that 
\begin{equation}\label{eq:cond1l}
\nu\wedge\mathbf{E}=\mathbf{0}\quad\mbox{and}\quad\nu\wedge\mathbf{H}=0\quad\mbox{on}\ \ \Gamma, 
\end{equation}
then by the Cauchy-Kowalevski theorem, one readily has that $\mathbf{E}=\mathbf{H}\equiv 0$ in $\Omega$. This is known as the Holmgren's uniqueness principle. In fact, in the proofs of Theorems~\ref{thm:uniqueness1} and \ref{thm:uniqueness2}, we have made use of the Holmgren's principle in the case that $\Gamma$ is an open subset of a plane. In the sequel, to ease the exposition and with a bit abuse of notations, we simply refer to $\Gamma$ as a plane in such a case, though it may actually be an open subset of a plane. Our results established in Theorems~\ref{th:two imp}, \ref{thm:pec pmc}, \ref{thm:imp pec}, \ref{thm:imp pmc} and \ref{ir-2nodal} can be regarded as generalizing the Holmgren's uniqueness principle as discussed in what follows. 

Suppose that there are two planes $\widetilde\Pi_1$ and $\widetilde\Pi_2$ which intersect at a line segment $\bsl$ within $\Omega$ (see Fig.~\ref{fig:coordinate1}), and 
\begin{equation}\label{eq:gg1}
\nu\wedge\mathbf{E}=\mathbf{0}\ \ \mbox{on}\ \widetilde\Pi_1\quad\mbox{and}\quad\nu\wedge\mathbf{H}=\mathbf{0}\ \mbox{on}\ \widetilde\Pi_2. 
\end{equation}
Let $\angle(\widetilde\Pi_1,\widetilde\Pi_2)=\alpha\pi$. Suppose that $\alpha=1/N$ with $N\in\mathbb{N}$. Then according to Theorem~\ref{thm:pec pmc}, we know that the vanishing order of $\mathbf{E}$ around $\bsl$ is at least $N$. Letting $N\rightarrow \infty$, we see that in the limiting case, one has \eqref{eq:cond1l} with $\widetilde{\Pi}_1=\widetilde\Pi_2=\Gamma$ as well as that the vanishing order becomes infinity. That is, the classical Holmgren's uniqueness principle associated a plane $\Gamma$ for the Maxwell system \eqref{eq:eig} is the limiting case of our result in Theorem~\ref{thm:pec pmc}. It is surprisingly interesting that we have generalised such an observation in three aspect. First, the angle between the two intersecting planes is not infinitesimal and hence the vanishing order may be finite. Second, if the angle is irrational, not necessarily infinitesimal, the vanishing order is still infinity. Third, the homogeneous condition on the plan can be the much more general impedance condition. 

The application to inverse problem of the above observation can be described as follows. In inverse problems with electromagnetic probing, one usually sends a pair of incident fields and then collects the corresponding scattered wave data away from the inhomogeneous object; see \eqref{inverse} associated with \eqref{eq:forward}. In the following, we first take \eqref{inverse} as a specific exam elucidate the basic idea. Usually, the collection of the data is made on an analytic surface, say $\Gamma$, in the form $(\nu\wedge\mathbf{E}|_{\Gamma}, \nu\wedge\mathbf{H}|_{\Gamma})$. Then by the Holmgren's principle, we know that the information encoded into $(\nu\wedge\mathbf{E}|_{\Gamma}, \nu\wedge\mathbf{H}|_{\Gamma})$ is equivalent to knowing the electromagnetic fields outside the scattering obstacle, namely $\mathbb{R}^3\backslash\overline{\Omega}$, and hence is equivalent to the far-field pattern $\mathbf{E}_\infty/\mathbf{H}_\infty$. According to Theorem~\ref{ir-2nodal}, the measurement data can also be collected as $(\nu\wedge\mathbf{H}+\boldsymbol \eta_1\nu\wedge\mathbf{E}|_{\widetilde\Pi_1}, \nu\wedge\mathbf{H}+\boldsymbol \eta_2\nu\wedge\mathbf{H}|_{\widetilde\Pi_2})$ as long as $\widetilde\Pi_1$ and $\widetilde\Pi_2$ can intersect within $\mathbb{R}^3\backslash\overline{\Omega}$ with an irrational angle. Clearly, due to the analytic extension, it is not necessary for $\widetilde\Pi_1$ and $\widetilde\Pi_2$ to really intersect each other. The irrational intersection seems to be too restrictive and one can relax it to be a rational intersection with a large degree. Clearly, this conceptual information encoding technique also work for the other inverse electromagnetic scattering problem where the underlying object is not necessary an impenetrable obstacle as that considered in \eqref{inverse}. We hope that it might find practical applications in some special situations.

\end{document}